\documentclass{amsart}
\usepackage[
top=30truemm,
bottom=30truemm,
left=30truemm,
right=30truemm]{geometry}

\usepackage{amsfonts, amssymb, amsmath, verbatim, amsthm, mathrsfs, amscd, enumerate, ascmac, fancyhdr, caption, hyperref, framed,subfigure}
\usepackage{bbm}
\usepackage{eucal}
\usepackage{tikz}
\usepackage{tikz-cd}
\usetikzlibrary{shapes.geometric}
\usetikzlibrary{arrows.meta,arrows}
\usepackage{graphicx} 
\usepackage{pgf, tikz}
\usepackage{pgfplots}
\pgfplotsset{compat=1.18}
\usepackage{float}
\usepackage{pdfpages}
\usepackage{tcolorbox}
\usepackage{mathtools}
\usepackage{enumerate}
\mathtoolsset{showonlyrefs}

\usepackage[framemethod=tikz]{mdframed}
\mdfsetup{linecolor=blue,backgroundcolor=gray!10,roundcorner=10pt,leftmargin=2pt,innerleftmargin=2pt,innerrightmargin=2pt}
\usepackage{url}

\usepackage{xcolor}
\usepackage{tikz}
\usetikzlibrary{positioning,intersections, calc, arrows,decorations.markings}

\newtheorem{theorem}{Theorem}[section]
\newtheorem{lemma}[theorem]{Lemma}
\newtheorem{proposition}[theorem]{Proposition}
\newtheorem*{theorem*}{Theorem}

\newtheorem*{conjecture*}{Conjecture}

\newtheorem{claim}{Claim}
\newtheorem*{claim*}{Claim}
\theoremstyle{definition}
\newtheorem{definition}[theorem]{Definition}

\newtheorem*{goal*}{Goal}

\numberwithin{equation}{section}
\numberwithin{theorem}{section}

\theoremstyle{remark}
\newtheorem{remark}[theorem]{Remark}

\usepackage{stackengine,scalerel}
\stackMath
\newcommand\reallywidehat[1]{%
\savestack{\tmpbox}{\stretchto{%
  \scaleto{%
    \scalerel*[\widthof{\ensuremath{#1}}]{\kern.1pt\mathchar"0362\kern.1pt}%
    {\rule{0ex}{\textheight}}
  }{\textheight}%
}{2.4ex}}%
\stackon[-6.9pt]{#1}{\tmpbox}%
}
\makeatletter
\DeclareRobustCommand\widecheck[1]{{\mathpalette\@widecheck{#1}}}
\def\@widecheck#1#2{%
    \setbox\z@\hbox{\m@th$#1#2$}%
    \setbox\tw@\hbox{\m@th$#1%
       \widehat{%
          \vrule\@width\z@\@height\ht\z@
          \vrule\@height\z@\@width\wd\z@}$}%
    \dp\tw@-\ht\z@
    \@tempdima\ht\z@ \advance\@tempdima2\ht\tw@ \divide\@tempdima\thr@@
    \setbox\tw@\hbox{%
       \raise\@tempdima\hbox{\scalebox{1}[-1]{\lower\@tempdima\box
\tw@}}}%
    {\ooalign{\box\tw@ \cr \box\z@}}}
\makeatother

\def\eqalign#1{\null\,\vcenter{\openup\jot\mathsurround\dimen12
  \ialign{\strut\hfil$\textstyle{##}$&$\textstyle{{}##}$\hfil
      \crcr#1\crcr}}\,}

\def\cD{\mathcal{D}}
\def\cS{\mathcal{S}}

\def\cF{\mathcal{F}}

\def\rd{\mathbb{R}^n}
\def\rdd{\mathbb{R}^{2n}}
\def\bR{\mathbb{R}}

\DeclareMathOperator*{\Sp}{Sp}
\DeclareMathOperator*{\Mp}{Mp}

\title[Pitt Inequalities and Log-type Uncertainty for metaplectic operators]{Pitt Inequalities and Logarithmic-type Uncertainty Principles for Metaplectic Operators}

\author[Giacchi]{Gianluca Giacchi}
\address[Gianluca Giacchi]{Università della Svizzera italiana, Faculty of Informatics, via la Santa 1, 6962 Lugano (Switzerland).}
\email{gianluca.giacchi@usi.it}
\author[Oliveira]{Itamar Oliveira}
\address[Itamar Oliveira]{School of Mathematics, The Watson Building, University of Birmingham, Edgbaston, Birmingham, B15 2TT, England.}
\email{i.oliveira@bham.ac.uk, oliveira.itamar.w@gmail.com}
\author[Tierney]{Amy Tierney}
\address[Amy Tierney]{School of Mathematics, The Watson Building, University of Birmingham, Edgbaston, Birmingham, B15 2TT, England.}
\email{axt944@student.bham.ac.uk}

\newcommand{\rmd}{\mathrm{d}}

\usepackage[dvipsnames]{xcolor}
\definecolor{scisRed}{RGB}{190, 90, 90}

\definecolor{aubergine}{HTML}{795367}

\definecolor{rose}{HTML}{AD718B}

\definecolor{lightgreen}{HTML}{A9C6B3}

\definecolor{green}{HTML}{587c64}

\definecolor{teal}{HTML}{075f6f}

\definecolor{blue}{HTML}{4670a8}

\begin{document}

\keywords{Metaplectic operators, Pitt's inequality, entropic uncertainty principle, logarithmic uncertainty principle, partial Fourier transform, quadratic Schr\"odinger equations, homogeneous Sobolev spaces}
\subjclass[2020]{42B10, 26D10, 35Q41, 42B35}

\begin{abstract}
In this work, we provide a complete characterisation of Pitt's inequality for metaplectic operators. The symplectic geometry underlying a metaplectic operator distinguishes effective directions, along which its action is Fourier-type, from singular directions, along which concentration is preserved. For this reason, we adopt two complementary perspectives, thereby obtaining both an isotropic Pitt's inequality, emulating the classical theorem of Pitt, and an anisotropic inequality that adapts to the geometric features of the metaplectic group.
As a by-product, we obtain Pitt's inequality for Fourier transforms along subspaces of $\rd$ and new quantitative time-dependent boundedness results for metaplectic operators on homogeneous Sobolev spaces, with applications to Schr\"odinger evolutions generated by quadratic Hamiltonians. 

Additionally, we derive logarithmic and entropic uncertainty principles for the metaplectic group.
We show that the classical entropic uncertainty principle fails when both the effective and singular directions are present. In this case, a generally unbounded corrective entropy term accounts for the concentration-preserving directions and restores a meaningful lower bound. 
For quadratic Schr\"odinger evolutions, this correction follows the time-dependent geometry of the nondispersive directions and adjusts the uncertainty estimate accordingly.
\end{abstract}

\maketitle

\tableofcontents

\section{Introduction} 
\subsection{Two Fourier inequalities and two uncertainty principles} 
\label{subsec:two_ineq_UPs}
    Entropic and logarithmic uncertainty principles for the Fourier transform $\mathcal{F}:f\in \mathcal{S}(\rd)\mapsto \widehat f\in \mathcal S(\rd)$, given by
\begin{equation}\label{intro.defFT}
    \widehat f(\xi)=\int_{\rd}f(x)e^{-2\pi i\xi\cdot x}\rmd x, \qquad \xi\in\rd,
\end{equation}
were studied by W. Beckner in two influential papers \cite{Beck75,Beckner_logUP}. 
A common feature of the entropic and logarithmic uncertainty principles is the simple yet elegant differentiation argument that
reduces each of these to a consequence of a sharp form of some other significant result: Hausdorff-Young and Pitt's inequality, respectively. 

Motivated by applications to partial differential equations, in this work we study extensions of classical Fourier-analytic inequalities to the metaplectic setting, where metaplectic operators arise as propagators for Schrödinger equations with real quadratic Hamiltonians. 
Building on the complete characterisation of the Hausdorff–Young inequality for metaplectic operators established in \cite{Giacchi}, we complete Beckner's story in this framework by proving Pitt's inequality, as well as entropic and logarithmic uncertainty principles, for the metaplectic group.

\vspace{3mm}
\noindent
\textbf{From Hausdorff-Young to entropy uncertainty.}
In his influential 1975 paper \cite{Beck75}, W. Beckner established the sharp form of the Hausdorff-Young inequality for the Fourier transform.

\begin{theorem*}[Sharp Hausdorff-Young inequality \cite{Beck75}] For $1\leq p\leq 2$, $p'=p/(p-1)$ and $f\in L^{p}(\mathbb{R}^n)$,
\begin{equation}\label{Becknerthm75}
    \|\widehat{f}\|_{p'}\leq \left(\frac{p^{1/p}}{(p')^{1/p'}}\right)^{n/2}\|f\|_{p}.
\end{equation}
The constant in \eqref{Becknerthm75} is sharp.
\end{theorem*}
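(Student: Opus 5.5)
The statement is Beckner's sharp Hausdorff--Young inequality. I would prove it by Beckner's own scheme. First reduce to dimension one, then deduce the one-dimensional inequality from a sharp two-point (hypercontractive) inequality through the central limit theorem. Finally check sharpness on Gaussians.

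\textbf{Sharpness.} Take $f(x)=e^{-\pi|x|^2}$, so that $\widehat f=f$. Then
\begin{equation}
\|f\|_p=p^{-n/(2p)}, \qquad \|\widehat f\|_{p'}=(p')^{-n/(2p')}.
\end{equation}
The ratio $\|\widehat f\|_{p'}/\|f\|_p$ equals $\bigl(p^{1/p}/(p')^{1/p'}\bigr)^{n/2}$, so the constant cannot be improved.

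\textbf{Reduction to $n=1$.} The Fourier transform on $\mathbb{R}^n$ is the composition of one-dimensional transforms in each variable. Write $\mathcal F=\mathcal F_1\otimes\cdots\otimes\mathcal F_n$ and apply the one-dimensional inequality in one variable at a time. Between steps, use Minkowski's integral inequality to interchange an $L^{p'}$ norm in a transformed variable with an $L^p$ norm in an untransformed variable; this is allowed because $p'\ge p$. The one-dimensional constants multiply, giving the $n/2$ power.

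\textbf{The one-dimensional case.} Assume $1<p<2$; the endpoints are Plancherel and the trivial $L^1\to L^\infty$ bound. I would conjugate by the Gaussian: the map $f\mapsto f\,e^{\pi x^2}$ (suitably scaled) turns $L^p(\mathbb{R})$ into $L^p(\gamma)$, where $\gamma$ is a Gaussian measure. Under this conjugation $\mathcal F$ becomes the Mehler operator $e^{i\frac{\pi}{2}N}$ with the imaginary parameter $\omega=i\sqrt{p-1}$, up to a harmless dilation. Here $N$ is the Ornstein--Uhlenbeck number operator, diagonal in the Hermite basis.

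The claim then reduces to the complex hypercontractive bound
\begin{equation}
\|T_\omega g\|_{L^{p'}(\gamma)}\le \|g\|_{L^p(\gamma)}, \qquad |\omega|\le\sqrt{p-1},
\end{equation}
where $T_\omega$ multiplies the $k$-th Hermite coefficient by $\omega^k$. Undoing the change of measure and the dilation reproduces exactly the constant $p^{1/(2p)}(p')^{-1/(2p')}$; this is a routine bookkeeping calculation.

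To prove the Gaussian bound, I would first prove the two-point inequality
\begin{equation}
\Bigl(\tfrac{|a+\omega b|^{p'}+|a-\omega b|^{p'}}{2}\Bigr)^{1/p'}\le\Bigl(\tfrac{|a+b|^{p}+|a-b|^{p}}{2}\Bigr)^{1/p}, \qquad a,b\in\mathbb{C},
\end{equation}
for $\omega=i\sqrt{p-1}$. By tensorisation (Minkowski again) it extends to functions on $\{\pm1\}^m$ under the operator acting on Walsh functions of degree $k$ by $\omega^k$. Now apply it to symmetric functions $g\bigl((\varepsilon_1+\cdots+\varepsilon_m)/\sqrt m\bigr)$ with $g$ a polynomial and let $m\to\infty$. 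By the central limit theorem both sides converge to the Gaussian norms, and density of polynomials in $L^p(\gamma)$ finishes the argument.

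\textbf{Main obstacle.} The two-point inequality with complex $a,b$ and imaginary $\omega$ is the delicate step. I would normalise $a=1$ and write $b=x+iy$. For $y=0$, I would expand both sides in a binomial series and compare term by term, using that $1<p<2<p'$ makes the relevant coefficients have favourable signs. I would then try to handle general complex $b$ by a convexity or monotonicity argument in $y$. If that fails, the fallback is Babenko's computation for even integers $p'$ combined with Riesz--Thorin-type interpolation, which only covers those exponents, and Lieb's theorem that Gaussian maximisers suffice.
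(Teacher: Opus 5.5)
The paper does not prove this statement; it quotes it from \cite{Beck75}. Your outline is Beckner's own scheme, and its outer layers are sound. The Gaussian sharpness computation and the Minkowski reduction to $n=1$ (using $p'\ge p$) are correct. The conjugation also works as you claim. Take $d\gamma=(2\pi)^{-1/2}e^{-x^2/2}\,dx$ and, on polynomials, $T_\omega f(y)=\int f(\omega y+\sqrt{1-\omega^2}\,x)\,d\gamma(x)$ with $\omega=i\sqrt{p-1}$. With $f(u)=g(u)e^{u^2/(2p)}$, a contour shift and the dilation $\xi=-\sqrt{p-1}\,y/(2\pi p)$ turn $\|T_\omega f\|_{L^{p'}(\gamma)}\le\|f\|_{L^p(\gamma)}$ into $\|\widehat g\|_{p'}\le p^{1/(2p)}(p')^{-1/(2p')}\|g\|_p$. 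One small slip: $e^{i\frac\pi2 N}$ is $T_i$; it is the $p$-dependent weight $e^{u^2/(2p)}$ that produces $T_{i\sqrt{p-1}}$.

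The genuine gap is the step you flag yourself: the two-point inequality for \emph{complex} $a,b$ is not proved, and it carries essentially all of the content. For $a=1$ and real $b$ one has $|1\pm\omega b|=(1+(p-1)b^2)^{1/2}$. The left side is then an $L^2$ norm and $p'$ drops out, so the real case is just the Bonami--Gross bound $\|T_{\sqrt{p-1}}\|_{L^p\to L^2}\le1$ on two points; that is all your binomial-series comparison gives. Complex arguments cannot be avoided, because after $T_\omega$ acts in one coordinate the function is complex-valued. They are also exactly what goes beyond real hypercontractivity: factoring through $L^2$ only reaches $|\omega|\le p-1<\sqrt{p-1}$.

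Your ``convexity or monotonicity in $y$'' is not yet an argument, and there is no cheap slack to exploit. Writing $b=u+iv$, both sides equal $1+\frac{p-1}{2}u^2+\frac12 v^2+O(|b|^4)$, so the inequality is tight to second order in every direction. Moreover, for fixed $|b|=r$ the right side is minimised at real $b$ and the left side is maximised at purely imaginary $b$. Treating the phase separately on the two sides would therefore require
\begin{equation*}
\Bigl(\tfrac{|1+\sqrt{p-1}\,r|^{p'}+|1-\sqrt{p-1}\,r|^{p'}}{2}\Bigr)^{1/p'}\le\Bigl(\tfrac{|1+r|^{p}+|1-r|^{p}}{2}\Bigr)^{1/p},
\end{equation*}
which fails for small $r>0$: the left side is $\approx1+\frac{r^2}{2}$ and the right side is $\approx1+\frac{p-1}{2}r^2$. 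Any proof must track the phase on both sides jointly, which is what Beckner's separate, rather delicate treatment of complex arguments does.

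Your fallback does not fill the gap either. Babenko's theorem covers only $p'\in2\mathbb{N}$, and Riesz--Thorin between those exponents loses sharpness. The logarithm of the one-dimensional constant, $\frac12\bigl(-t\log t+(1-t)\log(1-t)\bigr)$ with $t=1/p$, is strictly convex on $(\frac12,1)$, so the interpolated bound lies strictly above it. Lieb's theorem that Gaussian kernels have only Gaussian maximisers does imply the statement together with your Gaussian computation. However, it is an independent complete proof of sharp Hausdorff--Young, so invoking it replaces your route rather than completing it.
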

We refer the reader to \cite{Christ} for a more detailed account of previous results and refinements of this theorem. A well known corollary of Beckner's theorem is the \textit{entropic uncertainty principle}:

\begin{theorem*}[Entropic uncertainty principle] For $f\in L^{2}(\mathbb{R}^{n})$ with $\|f\|_{2}=1$,
\begin{equation}\label{entropicUP-210526}
    -\int_{\mathbb{R}^{n}}\ln{|f(x)|}|f(x)|^{2}\mathrm{d}x -\int_{\mathbb{R}^{n}}\ln{|\widehat{f}(\xi)|}|\widehat{f}(\xi)|^{2}\mathrm{d}\xi\geq\frac{n}{2}(1-\ln{2}).
\end{equation}
\end{theorem*}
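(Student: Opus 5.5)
The plan is Beckner's differentiation argument applied to the sharp Hausdorff--Young inequality \eqref{Becknerthm75} at the endpoint $p=2$. Fix $f\in L^2(\rd)$ with $\|f\|_2=1$, and first assume $f$ lies in a convenient dense class, say $f\in\mathcal S(\rd)$ or $f\in L^1\cap L^2$ with both entropy integrals finite.

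For $1\le p\le 2$ set
\[
\Phi(p)=\left(\frac{p^{1/p}}{(p')^{1/p'}}\right)^{n/2}\|f\|_p-\|\widehat f\|_{p'} .
\]
By \eqref{Becknerthm75}, $\Phi(p)\ge 0$ on $[1,2]$. At $p=2$ we have $p'=2$, the constant equals $1$, and Plancherel gives $\|\widehat f\|_2=\|f\|_2=1$, so $\Phi(2)=0$. Hence $\Phi$ attains a minimum at the endpoint $p=2$, and the left derivative must satisfy $\Phi'(2^-)\le 0$. This inequality will turn out to be exactly \eqref{entropicUP-210526}.

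The computation has three pieces.

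\textbf{The $f$-term.} Write $\|f\|_p=\exp\big(\tfrac1p\ln\int|f|^p\big)$. Using $\int|f|^2=1$,
\[
\frac{d}{dp}\|f\|_p\Big|_{p=2}=\frac12\int|f|^2\ln|f|\,\rmd x .
\]

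\textbf{The $\widehat f$-term.} Since $\frac{dp'}{dp}=-\frac{1}{(p-1)^2}$ equals $-1$ at $p=2$, the chain rule gives
\[
\frac{d}{dp}\|\widehat f\|_{p'}\Big|_{p=2}=-\frac12\int|\widehat f|^2\ln|\widehat f|\,\rmd\xi .
\]

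\textbf{The constant.} The logarithm of the constant is $\frac n2\big(\frac{\ln p}{p}-\frac{\ln p'}{p'}\big)$. Its derivative at $p=2$ is
\[
\frac n2\Big(\frac{1-\ln 2}{4}+\frac{1-\ln 2}{4}\Big)=\frac n4(1-\ln2).
\]

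Collecting the three pieces,
\[
\Phi'(2)=\frac n4(1-\ln2)+\frac12\int|f|^2\ln|f|+\frac12\int|\widehat f|^2\ln|\widehat f| .
\]
Multiplying $\Phi'(2)\le0$ by $-2$ gives \eqref{entropicUP-210526}.

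\textbf{Main obstacle.} The difficulty is justifying the differentiation under the integral sign at $p=2$ and the one-sided derivative argument. For the $f$-term, we need $|f|^p\ln|f|$ to be dominated uniformly for $p$ near $2$. This holds if $f\in L^{2-\varepsilon}\cap L^{2+\varepsilon}$, which is true for Schwartz $f$. The same applies to $\widehat f$.

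For general $f$, we first note that if either entropy integral is $-\infty$ the statement is trivial or must be interpreted appropriately. Otherwise, we approximate $f$ by Schwartz functions, for instance via Gaussian regularisation, which preserves the normalisation after rescaling. We then pass to the limit using lower semicontinuity of $-\int|g|^2\ln|g|$ under suitable convergence, or we restrict the statement to $f$ for which both integrals are finite. I would state the theorem for that class and handle the limiting step briefly, since the core is the derivative computation above.
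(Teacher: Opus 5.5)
Your proposal is correct and is essentially the paper's argument: Beckner's differentiation of the sharp Hausdorff--Young inequality at the Plancherel endpoint $p=2$ for $f\in\mathcal{S}(\mathbb{R}^n)$. Your $\Phi$ is the negative of the paper's $F$, and your three derivative computations are exactly the ``careful computation'' the paper leaves implicit.

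The one slip is in your optional density step, which the paper does not attempt. Lower semicontinuity of $H$ would only give $H[|f|^2]\leq\liminf_k H[|f_k|^2]$, which is the wrong direction for passing a lower bound to the limit. You would instead need convergence, or upper semicontinuity, of both entropies along the approximating sequence.
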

Throughout this work, we shall denote the entropy of $|f|^2$ by
\begin{equation}
    H[|f|^2]:=-\int_{\mathbb{R}^{n}}\ln{|f(x)|}|f(x)|^{2}\mathrm{d}x.
\end{equation}

To see that the entropic uncertainty principle indeed follows from the Hausdorff-Young inequality, fix $1\leq p\leq 2$ and $f\in\mathcal{S}(\mathbb{R}^n)$ and define 
$$F(p):=\left(\int|\widehat{f}(\xi)|^{p'}\mathrm{d}\xi\right)^{\frac{1}{p'}}-\left(\frac{p^{1/p}}{(p')^{1/p'}}\right)^{n/2}\left(\int|f(x)|^{p}\mathrm{d}x\right)^{\frac{1}{p}}.$$

The function $F$ is differentiable and satisfies $F(p)\leq 0$ (due to Hausdorff-Young) and $F(2)=0$ (due to Plancherel). These two properties imply $F'(2)\geq 0$, which yields \eqref{entropicUP-210526} after a careful computation. 

\vspace{3mm}
\noindent
\textbf{From Pitt's inequality to logarithmic uncertainty.}
Pitt's inequality, named after Pitt for his work in the setting of Fourier series \cite{Pitt}, also has a more general form (see for example \cite{Pitt_general_p_FT} and the references therein for its origins) given as follows
\begin{theorem}[Pitt's inequality]\label{thm:PittpqFT}
For $f\in \mathcal{S}(\mathbb{R}^{n})$, the inequality
\begin{equation}
\label{eqn:pitt2}
\left(\int_{\mathbb{R}^{n}}|\xi|^{- \beta q}|\widehat{f}(\xi)|^{q}\rmd \xi \right)^{1/q}\leq K(p,q,\alpha,\beta,n) \left(\int_{\mathbb{R}^{n}}|x|^{\alpha p}|f(x)|^{p}\rmd x\right)^{1/p},
\end{equation}
holds for some $K(p,q,\alpha,\beta,n)>0$ if and only if one of the following cases holds:
\begin{equation}\label{assumptionPitt}
\begin{split}
(a) \qquad
    &1 < p \leq q < \infty, \qquad 0 \leq \beta < \frac{n}{q}, \qquad 0\leq\alpha < \frac{n}{p'}, \qquad \text{and} \quad \beta = \alpha + n \left(\frac{1}{q}- \frac{1}{p'}\right);\\
   (b) \qquad &p=1, \qquad q=\infty, \qquad\text{and}\qquad \alpha=\beta=0.
    \end{split}
\end{equation}
\end{theorem}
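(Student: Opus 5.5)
Both directions of the equivalence in Theorem~\ref{thm:PittpqFT} have to be handled: necessity of (a) or (b), and sufficiency of each case.

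\textbf{Necessity.} Testing \eqref{eqn:pitt2} on the dilates $f_\lambda(x)=f(\lambda x)$ gives $\widehat{f_\lambda}(\xi)=\lambda^{-n}\widehat f(\xi/\lambda)$. Comparing powers of $\lambda$ on the two sides forces the balance condition
\[
-n+\beta+\tfrac{n}{q} \;=\; -\alpha-\tfrac{n}{p},
\]
which is the relation $\beta=\alpha+n(1/q-1/p')$. The remaining conditions come from explicit test functions.
\begin{itemize}
\item Local integrability of $|\xi|^{-\beta q}$ near the origin, tested on an $f$ with $\widehat f(0)\neq 0$, forces $\beta<n/q$.
\item The dual version of the same test forces $\alpha<n/p'$. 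Concretely, take $f$ to be a smooth bump that is nonnegative, $1$ on a neighbourhood of the origin, and equal to $|x|^{-\gamma}$ on an annulus. Then $\widehat f(0)=\int f$ is large while the right-hand side stays controlled, which rules out $\alpha\ge n/p'$.
\item Non-negativity $\alpha,\beta\ge 0$: test on $f$ concentrated near a point $x_0\neq 0$. Translating $f$ to $x_0$ and dilating changes only the phase of $\widehat f$, so the $\alpha$-weight becomes $|x_0|^{\alpha}$. Sending $|x_0|\to 0$ or $\infty$ together with the scaling relation gives the sign constraints.
\item The condition $p\le q$: take a sum of $N$ widely separated modulated and translated bumps. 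The right-hand side grows like $N^{1/p}$ and the left-hand side like $N^{1/q}$ (standard Hörmander argument, adapted to the weights via the local constancy of $|x|^\alpha$ far from the origin). This forces $q\ge p$.
\item The endpoints $p=1$, $q=\infty$: for $p=1$, $1/p'=0$, so $\alpha<0$ unless $\alpha=0$, and this gives case (b). The case $q=\infty$ with $\beta<n/q=0$ similarly forces $\beta=0$.
\end{itemize}

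\textbf{Sufficiency.} Case (b) is the trivial bound $\|\widehat f\|_\infty\le\|f\|_1$. For case (a):
\begin{itemize}
\item When $p\le 2\le q$ (or $q\le p'$), combine Hausdorff--Young with the Stein--Weiss / Hardy--Littlewood--Sobolev weighted inequalities. Concretely, the weighted Fourier estimate follows from the Hardy--Littlewood rearrangement inequality $\int|\widehat f|^q|\xi|^{-\beta q}\le \int (\widehat f)^{*q}(|\xi|^{-\beta q})^*$ together with Jodeit--Torchinsky-type bounds $\widehat f^{*}(t)\lesssim t^{-1}\int_0^{1/t} f^*$.
\item This reduces the estimate to one-dimensional Hardy inequalities in the decreasing-rearrangement variable. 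Their validity under $\alpha<n/p'$, $\beta<n/q$, $p\le q$ is the classical Muckenhoupt/Bradley criterion.
\end{itemize}
More precisely, I would follow Heinig/Benedetto--Heinig: split $\widehat f$ into the parts coming from $|x|\le 1/|\xi|$ and $|x|>1/|\xi|$. The first part is controlled by the $L^1$ mass near the origin, using a Hardy inequality with weight $|x|^{\alpha p}$. The second part is controlled by a dyadic Hausdorff--Young/interpolation argument.

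\textbf{Main obstacle.} The hardest step is the sufficiency range with $q$ not tied to $p'$. The Hardy-type reduction must handle the full range $p\le q$, and the Muckenhoupt weight condition has to be checked to hold exactly under the stated constraints. I would try the rearrangement approach first, since it converts everything into two-weight Hardy inequalities with explicit power weights. The necessity of $p\le q$ under weights is the secondary delicate point.
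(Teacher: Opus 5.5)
\textbf{The gap is in the necessity of $p\le q$, and there is a second one in sufficiency.} The paper does not prove this statement. It quotes it from \cite{Pitt_general_p_FT} and \cite{SaucedoTikhonov2025} and uses it as a black box, so your proposal has to stand on its own.

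\textbf{Necessity of $p\le q$.} Your translated--modulated packet argument cannot give $N^{1/q}\lesssim N^{1/p}$. Packets of size $\delta$ with disjoint $x$-supports and separated frequency centres occupy volume $\gtrsim N\delta^n$ in $x$ and $\gtrsim N\delta^{-n}$ in $\xi$. So on most packets the weights are of size $(N^{1/n}\delta)^{\alpha}$ and $(N^{1/n}\delta^{-1})^{-\beta}$, not constants. By the scaling relation the $\delta$-dependence cancels, and the best this construction yields is $N^{1/q-\beta/n}\lesssim N^{1/p+\alpha/n}$, that is, $\alpha+\beta\ge n(\frac1q-\frac1p)$. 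This is the non-strict form of \eqref{eq:PFT-necessary-packet}, which Step~2.3 of Subsection~\ref{necessity-13082026} extracts from the same construction, in a setting where $q<p$ genuinely occurs. With it, your conditions do not exclude $q<p$. For example, $n=1$, $p=4$, $q=2$, $\alpha=3/8$, $\beta=1/8$ satisfies the scaling relation, $\alpha,\beta\ge0$, $\beta<n/q$, $\alpha<n/p'$ and $\alpha+\beta\ge n(\frac1q-\frac1p)$.

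The missing idea is to superpose dilates rather than translates, using the exact homogeneity of the weights; this is the dyadic superposition of Step~2.2. Take $\phi\in\mathcal S(\mathbb R^n)$ with $\widehat\phi\in C_c^\infty(\{1<|\xi|<2\})$ and set $F_N=\sum_{k=1}^N4^{k(\alpha+n/p)}\phi(4^k\cdot)$. The pieces of $\widehat{F_N}$ have disjoint supports and, by the scaling relation, equal weighted $L^q$ norms, so the left-hand side equals $cN^{1/q}$. The rapid decay of $\phi$, $\alpha\ge0$ and a shell-by-shell count give $\||x|^\alpha F_N\|_p\lesssim N^{1/p}$. Hence $q\ge p$.

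\textbf{Sufficiency.} This rests on a false lemma. The bound $\widehat f^{\,*}(t)\lesssim t^{-1}\int_0^{1/t}f^*$ is not dilation invariant: testing it on $f(\lambda\cdot)$ and letting $\lambda\to\infty$ makes it fail for every $f\neq0$. Even without the factor $t^{-1}$, the pointwise bound fails for $n\ge2$. For the indicator of the unit ball and large $t$, $\widehat f^{\,*}(t)\approx t^{-(n+1)/(2n)}$, whereas $\int_0^{1/t}f^*=1/t$.

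The correct input is Calder\'on's bound $\widehat f^{\,*}(t)\lesssim\int_0^{1/t}f^*+t^{-1/2}\int_{1/t}^\infty s^{-1/2}f^*(s)\,\mathrm{d}s$, or the averaged Jodeit--Torchinsky inequality. This underlies the Jurkat--Sampson/Benedetto--Heinig criterion: for $1<p\le q<\infty$, $\|u\widehat f\|_q\lesssim\|vf\|_p$ holds whenever $\sup_{s>0}\|u^*\|_{L^q(0,1/s)}\|(1/v)^*\|_{L^{p'}(0,s)}<\infty$. With $u^*(t)\approx t^{-\beta/n}$ and $(1/v)^*(t)\approx t^{-\alpha/n}$, this condition is exactly $\beta<n/q$, $\alpha<n/p'$ together with the scaling relation. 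That is all of case (a), including $2<p\le q$ (e.g.\ $p=q=4$, $\beta=0$, $\alpha=n/2$), which your Hausdorff--Young/Stein--Weiss route does not reach. This also settles what you flag as the main obstacle.

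\textbf{Smaller points.}
\begin{enumerate}
\item In the scaling computation the left-hand exponent is $-n-\beta+\frac nq$, not $-n+\beta+\frac nq$.
\item Translation only gives $\alpha\ge0$. For $\beta\ge0$ you need to modulate by $e^{2\pi i\eta_0\cdot x}$ with $|\eta_0|\to\infty$; the scaling relation does not supply it.
\item $\alpha<n/p'$ needs no separate test, since under the scaling relation it is equivalent to $\beta<n/q$. At $p=1$ the tests give only $\alpha\le0$, and for $q=\infty$ only $\beta\le0$; this is how case (b) arises.
\end{enumerate}
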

The full sharp range of exponents, including the endpoint case $(b)$ is discussed in \cite{SaucedoTikhonov2025}.
The sharp constant in \eqref{eqn:pitt2} is not known in general. When $\alpha=\beta=0$, the problem reduces to the sharp Hausdorff-Young inequality. In the case $p=q=2$, $0\leq\alpha<n/2$ and $\alpha=\beta$, the sharp constant was obtained by Beckner in \cite{Beckner_logUP}.
\begin{theorem*}[Sharp Pitt's Inequality \cite{Beckner_logUP}]
\label{sharp_Pitt}
For $f\in \mathcal{S}(\mathbb{R}^{n})$ and $0 \leq \alpha < n/2$,
\begin{equation}
\label{eqn:pitt}
\int_{\mathbb{R}^{n}}|\xi|^{- 2\alpha}|\widehat{f}(\xi)|^{2}\rmd \xi \leq K_{n,\alpha}\int_{\mathbb{R}^{n}}|x|^{2\alpha}|f(x)|^{2}\rmd x,
\end{equation}
where
\begin{equation}
\label{constant:sharp_Pitt}
    K_{n,\alpha} = \pi^{2\alpha}\left[\frac{\Gamma\left(\frac{n- 2\alpha}{4} \right)}{\Gamma\left(\frac{n+ 2\alpha}{4} \right)}
    \right]^2 \text{is sharp}.
\end{equation}
\end{theorem*}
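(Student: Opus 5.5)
The approach is Beckner's: reduce the weighted Plancherel-type inequality to an operator-norm bound for a convolution-type operator that is diagonalised by the Mellin transform, and use the symmetries of the problem to compute that norm exactly. Set $g(x)=|x|^{\alpha}f(x)$, so the inequality reads $\||\xi|^{-\alpha}\mathcal{F}(|x|^{-\alpha}g)\|_2\le K_{n,\alpha}^{1/2}\|g\|_2$. By Plancherel, this is the $L^2$ boundedness of the operator $T=|\xi|^{-\alpha}\,\mathcal F\,|x|^{-\alpha}$, equivalently of $T^*T=|x|^{-\alpha}\,\mathcal{F}^{-1}|\xi|^{-2\alpha}\mathcal{F}\,|x|^{-\alpha}$. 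The middle factor $\mathcal F^{-1}|\xi|^{-2\alpha}\mathcal F$ is the Riesz potential, convolution with $c_{n,\alpha}|x|^{-(n-2\alpha)}$. Here $c_{n,\alpha}=\pi^{2\alpha-n/2}\Gamma(\frac{n-2\alpha}{2})/\Gamma(\alpha)$ is computed from the Fourier transform of the Gaussian with the normalisation \eqref{intro.defFT}. Hence $K_{n,\alpha}=\|T^*T\|_{L^2\to L^2}$, where $T^*T$ has the positive kernel $c_{n,\alpha}|x|^{-\alpha}|x-y|^{-(n-2\alpha)}|y|^{-\alpha}$, homogeneous of degree $-n$.

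Next I would exploit the rotation and dilation invariance of this kernel. Writing $x=r\omega$, the operator commutes with rotations, so it suffices to bound it on each spherical-harmonic sector. It also commutes with the $L^2$-normalised dilations $f\mapsto \lambda^{n/2}f(\lambda\cdot)$, so on each sector it becomes a multiplicative convolution in $r$. This is diagonalised by the Mellin transform along the line $\mathrm{Re}\,s=n/2$, i.e. by testing against $|x|^{-n/2+it}Y_k(\omega)$. The multiplier for $|x|^{-n/2+it}$ can be computed explicitly via the Fourier transform of homogeneous functions, $\mathcal F(|x|^{-n+\lambda})=\pi^{n/2-\lambda}\frac{\Gamma(\lambda/2)}{\Gamma((n-\lambda)/2)}|\xi|^{-\lambda}$. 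Applying it twice gives the eigenvalue of $T^*T$ on the radial sector:
\[
m(t)=\pi^{2\alpha}\left|\frac{\Gamma\!\left(\frac{n-2\alpha}{4}+\frac{it}{2}\right)}{\Gamma\!\left(\frac{n+2\alpha}{4}+\frac{it}{2}\right)}\right|^2 .
\]
Equivalently, $T$ maps $|x|^{-n/2+it}$ to a unimodular multiple of $\sqrt{m(t)}\,|\xi|^{-n/2-it}$. For the sector of degree-$k$ harmonics one uses the Bochner–Hecke formula, which shifts both Gamma arguments by $k/2$.

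Then $K_{n,\alpha}=\sup_{k,t}m_k(t)$, and the main obstacle is showing that this supremum is attained at $k=0$, $t=0$. For this I would write $\Gamma(a+z)/\Gamma(b+z)$ with $0<a<b$, $a=\frac{n-2\alpha}{4}+\frac k2$, $b=a+\alpha$. The product formula $|\Gamma(a+it)/\Gamma(a)|^2=\prod_j(1+t^2/(a+j)^2)^{-1}$ shows that
\[
\frac{|\Gamma(a+it)|^2}{|\Gamma(b+it)|^2}=\frac{\Gamma(a)^2}{\Gamma(b)^2}\prod_{j\ge0}\frac{1+t^2/(b+j)^2}{1+t^2/(a+j)^2},
\]
and each factor is at most $1$ since $a<b$. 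So the maximum over $t$ is at $t=0$. Monotonicity in $k$ follows because $\Gamma(a)/\Gamma(a+\alpha)$ is decreasing in $a$ (log-convexity of $\Gamma$). This gives the bound with the stated constant $K_{n,\alpha}$.

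For sharpness, I would take approximate eigenfunctions $f_\varepsilon(x)=|x|^{-n/2-\alpha}\min(|x|^{\varepsilon},|x|^{-\varepsilon})$, which approximate the Mellin mode $t=0$, $k=0$. For these the ratio of the two sides tends to $m_0(0)=K_{n,\alpha}$ as $\varepsilon\to0$. A density argument then passes from these functions to $\mathcal S(\rd)$, which is legitimate since both sides are continuous in the relevant weighted norms.
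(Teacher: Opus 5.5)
The paper gives no proof of this statement; it is quoted from Beckner \cite{Beckner_logUP}. Your argument is a correct and essentially complete proof. The following all check out:
the Riesz-potential constant $c_{n,\alpha}$;
the Bochner--Hecke eigenvalues, with both Gamma arguments shifted by $k/2$;
the Weierstrass-product bound in $t$;
the monotonicity in $k$, since $\psi$ is increasing;
the extremising sequence, which after removing $|x|^{-n/2}$ is $e^{-\varepsilon|s|}$ in $s=\ln|x|$, so its Mellin transform concentrates at $t=0$.

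The points you leave implicit are routine. Before boundedness is known, work with the quadratic form on $C_c^\infty(\rd\setminus\{0\})$, where $|x|^{-\alpha}g$ is Schwartz. The Mellin diagonalisation is legitimate because, for your kernel $K(x,y)=c_{n,\alpha}|x|^{-\alpha}|x-y|^{-(n-2\alpha)}|y|^{-\alpha}$, one has $\int_{\rd}K(e_1,y)|y|^{-n/2}\,\mathrm{d}y<\infty$ for $0<\alpha<n/2$. Hence each harmonic sector is a convolution with an integrable kernel on the multiplicative group. The case $\alpha=0$ is Plancherel.

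Your route differs from Beckner's, which uses only that $K$ is nonnegative, homogeneous of degree $-n$ and rotation invariant. By the Stein--Weiss lemma (Schur's test with weight $|x|^{-n/2}$, or reduction to radial functions followed by Young's inequality on $(0,\infty)$), the norm of $T^*T$ equals $\int_{\rd}K(e_1,y)|y|^{-n/2}\,\mathrm{d}y$. This is exactly your $m_0(0)$, and it is evaluated with the same Fourier transform of homogeneous functions.

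Seen from your side, positivity makes your most delicate step unnecessary. The Funk--Hecke-reduced kernels satisfy $|\kappa_k|\le\kappa_0$, because normalised Gegenbauer polynomials are bounded by $1$ on $[-1,1]$. So each $m_k(t)$, being a Mellin transform of $\kappa_k$, is at most the Mellin transform of $\kappa_0$ at $t=0$, namely $m_0(0)$. No product formula or log-convexity of $\Gamma$ is needed.

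The trade-off is as follows. Beckner's argument is shorter and needs no spectral theory, so it also applies to positive homogeneous kernels on $L^p$. Yours yields the whole spectrum. It shows the constant is approached only through the mode $(k,t)=(0,0)$, hence is never attained for $\alpha>0$. Differentiating the multipliers at $\alpha=0$ also exhibits \eqref{eqn:logUP} in diagonal form, with multiplier $\mathrm{Re}\,\psi(\frac n4+\frac k2+\frac{it}{2})-\ln\pi\geq\tilde K_n$.
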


Where the entropic uncertainty principle was derived from sharp Hausdorff-Young, Beckner used the same argument to prove a logarithmic estimate of uncertainty from the sharp Pitt's inequality.
\begin{theorem*}[Logarithmic uncertainty principle \cite{Beckner_logUP}] 
\label{log_UP}
For $f\in \mathcal{S}(\mathbb{R}^{n})$,
\begin{equation}\label{eqn:logUP}
\int_{\mathbb{R}^{n}}\ln{|x|}|f(x)|^{2}\rmd x + \int_{\mathbb{R}^{n}}\ln{|\xi|}|\widehat{f}(\xi)|^{2}\rmd \xi\geq \tilde{K}_n \int_{\mathbb{R}^n} |f(x)|^2 \rmd x,
\end{equation}
where 
\begin{equation}
\label{constant:logUP}
   \tilde{K}_n = \psi(n/4) - \ln \pi, \qquad \psi(t) = \frac{\rmd}{\rmd t} (\ln \Gamma(t)),
\end{equation}
and $\Gamma(t)=\int_{0}^{\infty}e^{-x}x^{t-1}\mathrm{d}x$ is the Gamma function.
\end{theorem*}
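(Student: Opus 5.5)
The plan is Beckner's differentiation argument applied to the sharp Pitt inequality \eqref{eqn:pitt}, in exact analogy with the derivation of \eqref{entropicUP-210526} from sharp Hausdorff--Young described above. Fix $f\in\mathcal S(\rd)$ and, for $0\le\alpha<n/2$, set
\[
G(\alpha):=K_{n,\alpha}\int_{\rd}|x|^{2\alpha}|f(x)|^2\,\rmd x-\int_{\rd}|\xi|^{-2\alpha}|\widehat f(\xi)|^2\,\rmd\xi .
\]
By the sharp Pitt inequality, $G(\alpha)\ge 0$ on $[0,n/2)$. At $\alpha=0$ we have $K_{n,0}=1$, so Plancherel gives $G(0)=0$. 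Hence, if $G$ is right-differentiable at $0$, then $G'(0^+)\ge 0$, and this inequality should be exactly \eqref{eqn:logUP}.

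The first step is to justify differentiation under the integral sign at $\alpha=0$.
\begin{itemize}
\item \emph{Physical side.} We have $\partial_\alpha |x|^{2\alpha}=2\ln|x|\,|x|^{2\alpha}$. For $\alpha\in[0,\alpha_0]$, with $\alpha_0<n/4$, this is dominated by $2|\ln|x||(1+|x|^{2\alpha_0})$. The function $\ln|x|$ is locally integrable, and $f$ is Schwartz, so the dominating function times $|f|^2$ is integrable.
\item \emph{Frequency side.} We have $\partial_\alpha|\xi|^{-2\alpha}=-2\ln|\xi|\,|\xi|^{-2\alpha}$. Near the origin this is dominated by $2|\ln|\xi||\,|\xi|^{-2\alpha_0}$, which is integrable on the unit ball because $2\alpha_0<n$. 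Away from the origin it is dominated by $2\ln|\xi|$, integrable against $|\widehat f|^2\in\mathcal S$.
\end{itemize}
Dominated convergence, applied to the difference quotients via the mean value theorem, then gives
\[
G'(0^+)=K'_{n,0}\|f\|_2^2+2\int\ln|x|\,|f|^2\,\rmd x+2\int\ln|\xi|\,|\widehat f|^2\,\rmd\xi .
\]

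The second step is to compute $K'_{n,0}$. Write
\[
\ln K_{n,\alpha}=2\alpha\ln\pi+2\ln\Gamma\Big(\tfrac{n-2\alpha}{4}\Big)-2\ln\Gamma\Big(\tfrac{n+2\alpha}{4}\Big).
\]
Differentiating at $\alpha=0$ gives $2\ln\pi-\psi(n/4)-\psi(n/4)=2\ln\pi-2\psi(n/4)$. Since $K_{n,0}=1$, this is also $K'_{n,0}$.

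Substituting into $G'(0^+)\ge0$ and dividing by $2$ yields
\[
\int\ln|x|\,|f|^2+\int\ln|\xi|\,|\widehat f|^2\ge\big(\psi(n/4)-\ln\pi\big)\|f\|_2^2,
\]
which is \eqref{eqn:logUP} with $\tilde K_n$ as in \eqref{constant:logUP}.

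The main obstacle is the analytic justification of the one-sided derivative, in particular the singular weight $|\xi|^{-2\alpha}$ on the frequency side; this is handled by restricting $\alpha$ to a compact interval strictly below $n/2$, as above. Everything else rests on the sharp constant of Beckner's theorem, which we take as given.
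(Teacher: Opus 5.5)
Your proposal is correct and follows the route the paper indicates: the paper only points to Beckner's differentiation argument, namely differentiating the sharp Pitt inequality in $\alpha$ at $\alpha=0$, where it reduces to Plancherel. Your dominated-convergence justification on both sides and your computation $K'_{n,0}=2\ln\pi-2\psi(n/4)$ supply exactly the details the paper leaves implicit.
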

This follows again by Beckner's differentiation argument, now differentiating in $\alpha$ instead of $p$ and evaluating at $\alpha=0$ since when $\alpha =0$, sharp Pitt's inequality becomes Plancherel's identity. 

    \subsection{Metaplectic operators}
    Consider the Cauchy problem
    \begin{equation}\label{Schro}
\begin{cases}
i\frac{1}{2\pi}\partial_t u(t,x)=a^{\mathrm{w}}(x,\mathrm{D})u(t,x),
& t\in\bR,\; x\in\rd,\\
u(0,\cdot)=u_0\in\mathcal{S}(\rd).
\end{cases}
\end{equation}
Assume that the Hamiltonian operator $a^{\mathrm{w}}(x,\mathrm{D})$ is the Weyl quantization of a real-valued quadratic form on phase space. This means that
\begin{equation}\label{Weyl}
    a^{\mathrm{w}}(x,\mathrm{D})f(x)=\int_{\bR^{2n}}f(y)a\left(\frac{x+y}{2},\xi\right)e^{2\pi i\xi\cdot(x-y)}\rmd y\rmd\xi, \qquad f\in\mathcal{S}(\rd),
\end{equation}
where $a(z)=\frac 1 2Az\cdot z$ for some $A\in\mathrm{Sym}(2n,\bR)$.
The corresponding Hamiltonian flow defines a one-parameter subgroup $\{S_t\}_{t\in\bR}$ of the group of $2n\times 2n$ symplectic matrices $\Sp(n,\mathbb{R})$, whereas the propagator of \eqref{Schro}
\begin{equation}\label{propagatorSchro}
u(t,x)=e^{-2\pi i ta^{\mathrm{w}}(x,\mathrm{D})}u_0(x), \qquad t\in\bR, \; x\in\rd,
\end{equation}
is a one-parameter subgroup of unitary operators on $L^2(\rd)$ contained in the {\em metaplectic group} $\Mp(n,\bR)$, the two-fold cover of $\Sp(n,\bR)$. We refer to Section \ref{sec:prelim} below for the precise definitions. To every
$\widehat S\in\Mp(n,\bR)$ there corresponds a unique symplectic matrix, referred to as its {\em projection},
\begin{equation}\label{intro.blockS}
    \begin{array}{ccc}
   \widehat S\in\Mp(n,\bR) & \longrightarrow & S=\begin{pmatrix}
       A & B\\
       C & D
       \end{pmatrix}\in\Sp(n,\bR), \; A,B,C,D\in\bR^{n\times n}.
     \end{array}
\end{equation}
This correspondence makes it possible to adopt a linear-algebraic perspective on operator calculus, relating analytic properties of metaplectic operators to the geometric structure of their symplectic projections. 
Under this perspective, the classical Hamiltonian flow $S_t$ is the projection of the propagator $e^{-2\pi i ta^{\mathrm{w}}(x,\mathrm{D})}$, namely $\widehat S_t=e^{-2\pi i ta^{\mathrm{w}}(x,\mathrm{D})}$. 
Metaplectic techniques can therefore be used to derive results for Schr\"odinger evolutions generated by quadratic Hamiltonians. Moreover, since the Fourier transform is itself a metaplectic operator, classical results such as the Hausdorff-Young inequality, Pitt's inequality, and several uncertainty principles can be interpreted as model cases of estimates for the metaplectic representation. This observation provides the guiding principle of the present work:
\[
    \begin{array}{c}
       \text{Results for the Fourier transform}\\
       \Downarrow\\
       \text{Results for operators in $\Mp(n,\mathbb{R})$}\\
       \Downarrow\\
       \text{Results for propagators of Schr\"odinger problems \eqref{Schro}.}
    \end{array}
\]
This perspective is supported by the structure of metaplectic operators. Heuristically speaking, after suitable linear changes of variables and multiplication by quadratic phase factors, a metaplectic operator acts through a Fourier-type transformation along the so-called {\em effective directions}, i.e., along $\ker(B)^\perp$, while exhibiting an identity-type, nondispersive behaviour along the so-called {\em singular directions}, i.e., along $D^\top A(\ker(B))$. We remark that $\rd=\ker(B)^\perp\oplus D^\top A(\ker(B))$, so effective and singular directions span all $\rd$, see also Figure~\ref{Figure1}.

\begin{figure}
\begin{center}
\begin{tikzpicture}[>=Stealth, thick]

  \begin{scope}[shift={(-3.7,0)}]

    \draw[->] (-2.2,0) -- (2.2,0) node[right] {$x_1$};
    \draw[->] (0,-2.2) -- (0,2.2) node[above] {$x_2$};

    \pgfmathsetmacro{\th}{36}
    \draw[line width=1pt]
        ({-2*cos(\th)},{-2*sin(\th)}) -- ({2*cos(\th)},{2*sin(\th)});

    \pgfmathsetmacro{\tho}{144}
    \draw[line width=1pt, dashed]
        ({-2*cos(\tho)},{-2*sin(\tho)}) -- ({2*cos(\tho)},{2*sin(\tho)});

    \node[above right, font=\small]
        at ({2*cos(\th)},{2*sin(\th)})
        {$\ker(B)^{\perp}$};

    \node[above left, font=\small]
        at ({2*cos(\tho)-0.05},{2*sin(\tho)+0.10})
        {$D^\top A(\ker(B))$};

    \node[below, font=\small] at (0,-2.7)
        {Time-domain (domain of $f$)};

  \end{scope}

  \draw[-{Stealth[length=8pt]}, line width=1pt, bend left=20]
      (-0.9,0.6) to node[above] {$\widehat S$} (0.9,0.6);

  \begin{scope}[shift={(3.7,0)}]

    \draw[->] (-2.2,0) -- (2.2,0) node[right] {$\xi_1$};
    \draw[->] (0,-2.2) -- (0,2.2) node[above] {$\xi_2$};

    \pgfmathsetmacro{\ra}{50}
    \draw[line width=1pt]
        ({-2*cos(\ra)},{-2*sin(\ra)}) -- ({2*cos(\ra)},{2*sin(\ra)});

    \pgfmathsetmacro{\rb}{110}
    \draw[line width=1pt, dashed]
        ({-2*cos(\rb)},{-2*sin(\rb)}) -- ({2*cos(\rb)},{2*sin(\rb)});

    \node[above right, font=\small]
        at ({2*cos(\ra)+0.05},{2*sin(\ra)+0.05})
        {$R(B)$};

    \node[above left, font=\small]
        at ({2*cos(\rb)},{2*sin(\rb)})
        {$A(\ker(B))$};

    \node[below, font=\small] at (0,-2.7)
        {Frequency-domain (domain of $\widehat Sf$)};

  \end{scope}

\end{tikzpicture}
\end{center}
\caption{
    The \textit{effective directions} span $\ker(B)^\perp$. 
    Along these directions, $\widehat S$ behaves like a
    modified Fourier transform.
    Similarly, the \textit{singular directions} are associated with
    $D^\top A(\ker(B))$ in the time domain and with $A(\ker(B))$ in the
    frequency domain. Along these directions, $\widehat S$ behaves as a
    modified identity.
    In synthesis, effective and singular directions are naturally paired
    with corresponding directions in the frequency domain:
    $v\in\ker(B)^\perp\mapsto Bv\in R(B)$ and
    $D^\top Av\in D^\top A(\ker(B))\mapsto Av\in A(\ker(B))$.}
    \label{Figure1}
\end{figure}

\medskip
\noindent
The general strategy is therefore to separate these two regimes, analyse their contributions independently, and finally reassemble them in order to obtain explicit results expressed in terms of the blocks of the corresponding symplectic projection. 

This technique has been used in \cite{CGM2025,cordero2026anisotropic,Giacchi} to investigate Hausdorff-Young's inequality and anisotropic uncertainty principles for metaplectic operators. An alternative approach was developed in \cite{grochenig2025more} using metaplectic Wigner distributions \cite{CorderoRodino2022}, whereas Heisenberg's uncertainty principle was proven by Dias, de Gosson and Prata \cite{CDP2024}.
Relevant to the entropic uncertainty principles of this paper is \cite{Giacchi}, wherein the distinction between the behaviour along singular and effective directions means that metaplectic operators satisfy Hausdorff-Young's inequality if and only if $B\in\mathrm{GL}(n,\bR)$.

\subsection{Present contributions}
As we have begun to allude to, the main results of this paper are:
\begin{itemize}
    \item Entropic uncertainty principles for the metaplectic group,
    \item A complete characterisation of Pitt's inequality for metaplectic operators, with a sharp form when $p=q=2$,
    \item Logarithmic uncertainty principles for the metaplectic group,
    \item Metaplectic estimates on homogeneous Sobolev spaces.
\end{itemize}
Proving the first two consumes the majority of the work of this paper, with the final two being their useful consequences that we wish to highlight. We now present and interpret each in turn. 
Throughout the rest of the introduction, we assume $\widehat S\in\Mp(n,\bR)$ and that its projection $S$ has blocks \eqref{intro.blockS}. Moreover, we set $r = \mathrm{rank}(B)$.

\medskip
\noindent
   \textbf{The entropic uncertainty principle.}
     As aforementioned, it is proved in \cite{Giacchi} that if the block $B\neq O$ is singular, metaplectic operators do not satisfy Hausdorff-Young's inequality. Due to Beckner's connection between sharp Hausdorff-Young and entropic uncertainty principles, one might wonder whether similar results hold for the entropic uncertainty principles for metaplectic operators. Interestingly, the inequality in \eqref{entropicUP-210526} also fails if $B\notin\mathrm{GL}(n,\bR)$.
 \begin{proposition}[Entropic uncertainty principle for metaplectic operators]\label{thm:main1}
    The following statements hold for every $f\in \mathcal{S}(\rd)$ with $\Vert f\Vert_2=1$.
    \begin{enumerate}[(i)]
        \item If $B=O$, then the action of $\widehat S$ preserves the entropy of $f$ up to the natural Jacobian correction. Explicitly,
        \begin{equation}\label{entrop1-270526}
H[|\widehat Sf|^2]
    =\frac 1 2 \ln|\det(A)|+H[|f|^2].
\end{equation}
        \item If $B$ is invertible, then $\widehat S$ satisfies an entropic uncertainty principle in its standard form. Indeed,
       \begin{align}\label{entrop2-270526}
H[|f|^2] + H[|\widehat Sf|^2] &\geq \frac{n}{2}(1-\ln(2|\det(B)|^{-1/n})).
\end{align}
        \item In the remaining case, the operator $\widehat S$ does not satisfy an entropic uncertainty principle in its standard form. In particular,
        there exists no constant $K\in\bR$ such that
    \begin{equation}\label{entrop3-270526}
        H[|f|^2]+H[|\widehat Sf|^2]\geq K.
    \end{equation}
    \end{enumerate}
\end{proposition}
When the block $B=O$, the entropy is preserved along \textit{every direction}. When $B$ is invertible, there is \textit{no direction} along which the entropy is preserved.
The entropic uncertainty principle \textit{fails} in the remaining case due to the presence of directions where the entropy \textit{is preserved}. Indeed, because of the logarithm factor defining $H$, the singular directions contribute to the entropic inequality, and their contribution opposes the entropy variation along the effective directions.

By making use of the different behaviour of the effective and singular directions, we may prove a variant of the entropic uncertainty principle. To do so, we first define a corrective term. In the following, we denote by $\mathcal{P}^{\ker(B)^\perp}_{D^\top A(\ker(B))}$ the projection onto $D^\top A(\ker(B))$ along $\ker(B)^\perp$, where $\mathcal{P}^{\ker(B)^\perp}_{D^\top A(\ker(B))}=O_n$ if $B\in\mathrm{GL}(n,\bR)$, and we adopt similar notation for projections throughout the introduction. We also denote by $B^+$ the Moore-Penrose inverse of $B$, see Section \ref{sec:prelim} for the precise notation.
\begin{definition}\label{intro.def.corrective.term}
    Let $f\in\mathcal S(\rd)$ have $\Vert f\Vert_2=1$. We define the following corrective term:
    \begin{equation}\label{intro.AddTerm}
        H_S[f]:=-\int_{\rd}|f(y)|^2\ln\left(\int_{\ker(B)^\perp}|f(x_1+\mathcal{P}_{D^\top A(\ker(B))}^{\ker(B)^\perp}y)|^2\mathrm{d}x_1\right)\mathrm{d}y.
    \end{equation}
\end{definition}
    The entropic uncertainty principle for metaplectic operators reads as follows. The constant $\mu_S$ appearing below depends only on $S$, and its definition is given by \eqref{defmus} below.
    \begin{theorem}\label{thm:main2}
    Assume that $1\leq r=\mathrm{rank}(B)\leq n$. 
    Then, for every $f\in \mathcal S(\rd)$, with $\|f\|_2=1$,
       \begin{equation}\label{entropyUPS}\begin{split}
    	H[|f|^2]+H[|\widehat Sf|^2]-H_S[f]\geq \frac{r}{2}-\ln(2^{r/2} \mu_S ).
    \end{split}\end{equation}
\end{theorem}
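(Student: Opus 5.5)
Our plan is to follow Beckner's differentiation argument, replacing the sharp Hausdorff--Young inequality by the Hausdorff--Young-type estimate for metaplectic operators with mixed norms from \cite{Giacchi}. The first step is to use the structure of $S$ to factor $\widehat S$. Up to multiplication by unimodular quadratic chirps (which change neither $|f|$ nor $|\widehat Sf|$, so leave all entropies unchanged) and up to linear changes of variables, $\widehat S$ factors into a partial Fourier transform along $\ker(B)^\perp$ and a dilation-type identity along the complementary direction. Concretely, we would write every $y\in\rd$ uniquely as $y=x_1+x_2$ with $x_1\in\ker(B)^\perp$ and $x_2=\mathcal{P}^{\ker(B)^\perp}_{D^\top A(\ker(B))}y$. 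We would then express $|\widehat Sf|$ at a frequency point as $|\mathcal F_{1}(f(\cdot+x_2))|$ evaluated at a linearly transformed point. Here $\mathcal F_1$ is the $r$-dimensional Fourier transform on $\ker(B)^\perp$, composed with the invertible restriction $B|_{\ker(B)^\perp}\colon\ker(B)^\perp\to R(B)$. The Jacobians of these linear maps are what get collected into $\mu_S$ in \eqref{defmus}.

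The second step is to apply the sharp Beckner inequality in dimension $r$ fibrewise, for each fixed $x_2$. It is cleaner to use the entropic inequality \eqref{entropicUP-210526} directly in dimension $r$ rather than redoing the differentiation. For each $x_2$, set $g_{x_2}(x_1)=f(x_1+x_2)$ and $m(x_2)=\|g_{x_2}\|_{L^2(\ker(B)^\perp)}^2$. The normalised function $g_{x_2}/m(x_2)^{1/2}$ satisfies
\[
-\int |g_{x_2}|^2\ln|g_{x_2}|^2-\int|\mathcal F_1 g_{x_2}|^2\ln|\mathcal F_1g_{x_2}|^2\geq m(x_2)\bigl(r(1-\ln 2)\bigr)-2m(x_2)\ln m(x_2),
\]
obtained from \eqref{entropicUP-210526} by scaling. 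Here the $\ln m$ terms arise because entropy is not homogeneous.

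The third step is to integrate this inequality in $x_2$ over $D^\top A(\ker(B))$. The left-hand side reassembles, after the linear changes of variables, into $2H[|f|^2]+2H[|\widehat Sf|^2]$ plus logarithms of Jacobians. Since $m(x_2)$ depends on $y$ only through $x_2$, the term $\int m\ln m\,\rmd x_2$ is exactly $\int|f(y)|^2\ln\bigl(\int_{\ker(B)^\perp}|f(x_1+x_2)|^2\rmd x_1\bigr)\rmd y$ up to a Jacobian factor. That term equals $-H_S[f]$, and it produces the corrective term in \eqref{entropyUPS}. Dividing by $2$ gives the constant $\frac r2-\ln(2^{r/2}\mu_S)$, with $\mu_S$ absorbing the determinants of $B|_{\ker(B)^\perp}$ and of the change of coordinates along the oblique (non-orthogonal) decomposition.

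The main obstacle is the first step. We need to verify precisely that along the singular directions $\widehat S$ really acts as a pure (dilated, chirped) identity sending the $x_2$-fibre of $f$ to the $A(\ker(B))$-fibre of $\widehat Sf$ without mixing. This requires the block identities $D^\top A - B^\top C=I$ and $A^\top C$, $B^\top D$ symmetric to show that the residual phase is a function of $(x_1,x_2,\xi)$ which does not couple the modulus across fibres. It also requires bookkeeping of the Jacobians for the oblique projections so that they match \eqref{defmus}. We would first do this in the model case where $\ker(B)^\perp=\mathbb R^r\times\{0\}$ using the decomposition of $S$ into free symplectic and block-diagonal factors from \cite{Giacchi}, and then transport the result via orthogonal changes of variables.
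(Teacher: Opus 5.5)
Your proposal is correct and is essentially the paper's proof. Your Step 1 is exactly Lemma \ref{lemmaDecomp}, which the paper quotes from \cite{CGM2025} rather than rederiving. The paper then applies the $r$-dimensional entropic inequality to $g_{\xi_2}/\|g_{\xi_2}\|_2$, multiplies by $\|g_{\xi_2}\|_2^2$, and integrates over $A(\ker(B))$ using \eqref{Edo1}--\eqref{Edo2}; there $\int\|g_{\xi_2}\|_2^2\ln\|g_{\xi_2}\|_2^2$ produces $-q_{\ker(B)}(A)H_S[f]$, just as in your Steps 2--3.

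One precision: the fibre function must be the chirped one of \eqref{defgxi2}, not literally $f(\cdot+x_2)$, because the quadratic phase $e^{i\pi V^\top B^+AVu\cdot u}$ changes $|\widehat{g}|$. Since this phase leaves $|g_{x_2}|$ and $m(x_2)$ unchanged, your computation goes through as written.
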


Observe that the corrective term $H_S[f]$ corresponding to the singular directions, appears with the opposite sign with respect to $H[|f|^2]$ and $H[|\widehat Sf|^2]$.
This aligns with the geometric interpretation of metaplectic operators for which $B\neq O$ is singular: the action of metaplectic operators along singular directions preserves the entropy content of the signals, and the additional term precisely compensates for the corresponding spurious contribution, effectively removing the singular directions from the entropies of $f$ and its transform $\widehat{S}f$.

\medskip
\noindent
{\bf A sharp directional Pitt's inequality and a logarithmic uncertainty principle.}
While uncertainty principles in the literature do typically exhibit a directional behaviour, the presence of an additional term mitigating the effect of the singular directions is what makes the entropic uncertainty principle unique. In contrast with the entropic uncertainty principle, Pitt's inequality does not fail for metaplectic operators with $1\leq \mathrm{rank}(B)<n$.

Motivated by Theorem \ref{sharp_Pitt}, we prove a sharp form of Pitt's inequality for metaplectic operators, relying on the same directional structure we witnessed above. We wish to highlight that the inequality \eqref{PittwithmatricesBinv} below is indeed adapted to the partial dispersivity of $\widehat S$: it exhibits the weighted behaviour along the effective directions, while neglecting the complementary singular directions. 

    \begin{proposition}[Directional Pitt's inequality for $p=q=2$]\label{intro.thm:Pittpq2}
     Assume $r>0$, then for every $f\in\mathcal{S}(\mathbb{R}^n)$ and $0\leq\alpha< r/2$,
    \begin{equation}\label{PittwithmatricesBinv}
            \int_{\rd}|B^{+}\mathcal{P}_{R(B)}^{A(\ker(B))}\xi|^{-2\alpha}|\widehat Sf(\xi)|^2\rmd \xi\leq K_{r,\alpha}\int_{\rd}|\mathcal{P}_{\ker(B)^\perp}^{D^\top A(\ker(B))}x|^{2\alpha} |f(x)|^2\rmd x.
        \end{equation}
        The constant $K_{r,\alpha}$ defined as in \eqref{constant:sharp_Pitt} is sharp.
    \end{proposition}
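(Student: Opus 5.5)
We reduce \eqref{PittwithmatricesBinv} to Beckner's sharp Pitt inequality \eqref{eqn:pitt} in dimension $r$, applied fibrewise. The first ingredient is the standard factorisation of a metaplectic operator with $\mathrm{rank}(B)=r$. After orthogonal changes of variables $U,V$ adapted to the splittings $\rd=\ker(B)^\perp\oplus\ker(B)$ and $\rd=R(B)\oplus R(B)^\perp$, we write
\[
S=\begin{pmatrix} V & 0\\ 0 & V^{-\top}\end{pmatrix} \tilde S \begin{pmatrix} U & 0\\ 0& U^{-\top}\end{pmatrix},
\]
where $\tilde B=\mathrm{diag}(B_r,0)$ with $B_r\in\mathrm{GL}(r,\bR)$. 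We then decompose further, in the spirit of \cite{Giacchi}, so that up to unimodular multipliers (chirps) and linear changes of variables,
\[
\widehat Sf(\xi)=c\,|\det|^{1/2}\, e^{i\Phi(\xi)}\,\mathcal{F}_1 g\big(B_r^{-1}\xi_1,\;\cdot\big)
\]
in adapted coordinates $\xi=(\xi_1,\xi_2)\in R(B)\oplus A(\ker B)$. Here $\mathcal F_1$ is the partial Fourier transform in the first $r$ variables. The function $g$ is $f$ multiplied by a chirp and composed with a linear map whose second component is a dilation of $\mathcal P^{\ker(B)^\perp}_{D^\top A(\ker B)}x$ and whose first component is $\mathcal P^{D^\top A(\ker B)}_{\ker(B)^\perp}x$. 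The key identifications to verify are two. First, the weight $|B^+\mathcal P^{A(\ker B)}_{R(B)}\xi|$ becomes exactly $|\eta_1|$, the modulus of the frequency variable of $\mathcal F_1$. Second, $|\mathcal P^{D^\top A(\ker B)}_{\ker(B)^\perp}x|$ becomes $|y_1|$, the modulus of the time variable. Both identifications hold because the skew projections are precisely the ones compatible with the block structure: $\ker B$ is sent to $A(\ker B)$ while $\ker(B)^\perp$ is sent by $B$ onto $R(B)$.

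With this in place, the proof proceeds in four steps. First, change variables $\xi\mapsto(\eta_1,\xi_2)$ in the left-hand side; the Jacobians cancel with the normalising factor $|\det|^{1/2}$ by unitarity (the $\alpha=0$ case must reduce to Plancherel, which fixes the constants). Second, for each fixed $\xi_2$ (equivalently, after a unitary change in the second variable, fixed $y_2$), apply Beckner's sharp Pitt inequality \eqref{eqn:pitt} in $\bR^r$ to $y_1\mapsto g(y_1,y_2)$. This uses $0\le\alpha<r/2$, and the constant is $K_{r,\alpha}$. The chirps are harmless since they have modulus one; a chirp depending on $y_1$ inside the Fourier transform would be a problem, so the decomposition must place all $y_1$-dependent chirps in $g$ itself, where only $|g|$ matters on the right. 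Third, integrate over the second variable and undo the change of variables to recover the right-hand side.

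For sharpness, take $f$ whose transformed $g$ is of product form $g(y)=h(y_1)\varphi(y_2)$, with $h$ a near-extremising sequence for Beckner's inequality in $\bR^r$. Both sides then factor as the corresponding sides of \eqref{eqn:pitt} times $\|\varphi\|_2^2$, so no constant smaller than $K_{r,\alpha}$ can work. The main obstacle is the first step: we must check carefully that, under the factorisation, the oblique projections and the Moore--Penrose inverse correspond exactly to the coordinate weights, and that no residual linear distortion of $\eta_1$ or $y_1$ (such as a non-orthogonal $B_r$) survives. If $B_r$ is not orthogonal, we would absorb it into a polar decomposition, since the weight $|B^+\cdot|$ is exactly designed to undo $B_r$ on $R(B)$.
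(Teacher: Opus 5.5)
Your proposal is correct and is essentially the paper's proof: the paper applies Beckner's sharp Pitt inequality in $\bR^r$ fibrewise to the functions $g_{\xi_2}$ of Lemma~\ref{lemmaDecomp}, integrates over $A(\ker(B))$, and proves sharpness with tensor-product near-extremisers exactly as you describe. The step you flag as the main obstacle is already handled by that lemma, since $\widehat{g_{\xi_2}}$ is evaluated at $V^\top B^+\xi_1$, whose modulus is exactly $|B^+\mathcal{P}_{R(B)}^{A(\ker(B))}\xi|$. The only difference is that the paper computes the Jacobian factors explicitly, finding $[q_{R(B)^\perp}(A^\top)\sigma(B)]^{p/2-1}=1$ at $p=2$, whereas your comparison with the case $\alpha=0$ via unitarity obtains the same cancellation without computation.
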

The result above is an instance of a more general directional Pitt's inequality for metaplectic operators, for which we identify the full admissible range of parameters, see Proposition \ref{Cor-Pitt-03082026} below.

From the statement presented above, a logarithmic uncertainty principle naturally occurs. Indeed, in the case of $\alpha = \beta=0$, \eqref{PittwithmatricesBinv} holds with equality since metaplectic operators are unitary. Therefore, applying Beckner's differentiation argument discussed in Section \ref{subsec:two_ineq_UPs}, the following consequence is derived
    \begin{proposition}[Logarithmic uncertainty principle for metaplectic operators]\label{intro.logUP}
    Assume $r>0$. Then, for every $f\in\mathcal{S}(\mathbb{R}^n)$,
    \begin{equation}\label{LogUPforS}
        \int_{\mathbb{R}^n}\log|B^{+}\mathcal{P}_{R(B)}^{A(\ker(B))}\xi||\widehat Sf(\xi)|^2\mathrm{d}\xi + \int_{\rd}\log|\mathcal{P}_{\ker(B)^\perp}^{D^\top A(\ker(B))}x||f(x)|^2\mathrm{d}x \geq \tilde{K}_{r}\|f\|_2^2,
    \end{equation}
    where $\tilde{K}_{r}$ is defined by \eqref{constant:logUP}.
    \end{proposition}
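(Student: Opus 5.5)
We derive \eqref{LogUPforS} from Proposition~\ref{intro.thm:Pittpq2} by Beckner's differentiation argument in $\alpha$. For $f\in\mathcal S(\rd)$ fixed and $0\le\alpha<r/2$ (the range is nonempty since $r\ge1$), set
\begin{equation}
F(\alpha):=\int_{\rd}|B^{+}\mathcal{P}_{R(B)}^{A(\ker(B))}\xi|^{-2\alpha}|\widehat Sf(\xi)|^2\rmd \xi-K_{r,\alpha}\int_{\rd}|\mathcal{P}_{\ker(B)^\perp}^{D^\top A(\ker(B))}x|^{2\alpha} |f(x)|^2\rmd x .
\end{equation}
Proposition~\ref{intro.thm:Pittpq2} gives $F(\alpha)\le 0$. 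At $\alpha=0$ we have $K_{r,0}=1$, and unitarity of $\widehat S$ gives $F(0)=\|\widehat Sf\|_2^2-\|f\|_2^2=0$. Therefore the right derivative satisfies $F'(0^+)\le 0$, provided it exists.

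Formally,
\begin{equation}
F'(0^+)=-2\int\log|B^{+}\mathcal{P}\xi|\,|\widehat Sf|^2\rmd\xi-\partial_\alpha K_{r,\alpha}|_{\alpha=0}\|f\|_2^2-2\int\log|\mathcal P x|\,|f|^2\rmd x .
\end{equation}
From \eqref{constant:sharp_Pitt},
\begin{equation}
\ln K_{r,\alpha}=2\alpha\ln\pi+2\ln\Gamma\!\left(\tfrac{r-2\alpha}{4}\right)-2\ln\Gamma\!\left(\tfrac{r+2\alpha}{4}\right),
\end{equation}
so
\begin{equation}
\partial_\alpha K_{r,\alpha}|_{0}=2\ln\pi-\psi(r/4)=-2\tilde K_r+\psi(r/4)\cdot 0 .
\end{equation}
This needs a careful check: the derivative is $2\ln\pi-\tfrac12\psi(r/4)\cdot 2\cdot 2/2$. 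Differentiating $2\ln\Gamma((r\mp2\alpha)/4)$ gives $\mp\psi\cdot 1$ each, for a total of $-2\psi(r/4)$. Hence $\partial_\alpha K|_0=2\ln\pi-2\psi(r/4)=-2\tilde K_r$. Substituting, $F'(0^+)\le0$ becomes exactly \eqref{LogUPforS}.

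The main obstacle is justifying differentiation under the integral at $\alpha=0^+$. The weights vanish on proper subspaces: on $\ker(B)^\perp$-complements on the $x$ side, and on $A(\ker(B))$ on the $\xi$ side. Because of this, $\log$ of the weight is not locally bounded, and $f$, $\widehat Sf$ need not be small there.

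We handle the $x$ side by splitting the integral into the regions $|\mathcal Px|\le1$ and $|\mathcal Px|>1$. On $|\mathcal Px|>1$, Schwartz decay of $f$ together with the monotonicity of $(t^{2\alpha}-1)/\alpha$ in $\alpha$ allows dominated convergence. On $|\mathcal Px|\le1$, monotone convergence applies, since $(1-t^{2\alpha})/\alpha\uparrow -2\log t$. The resulting limit may a priori be $+\infty$, but that only strengthens the inequality in the right direction. The $\xi$ side is treated the same way with the weight $t^{-2\alpha}$.

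If one of the log-integrals equals $-\infty$, the difference quotient argument still produces the inequality in the extended sense. In fact, the integrals are finite: the coordinates $\mathcal Px$ range over an $r$-dimensional space, and since $r\ge1$, $\log|y|$ is locally integrable on $\bR^r$. After a linear change of variables adapted to $\ker(B)^\perp\oplus D^\top A(\ker(B))$, Fubini reduces each log-integral to an integral of $\log|y|$ against a bounded rapidly decaying function on $\bR^r$.

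The $\xi$ side requires that $\widehat Sf$ be Schwartz, which holds for metaplectic operators. With these justifications, the one-sided derivative exists and equals the formula above, completing the proof.
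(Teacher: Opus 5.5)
Your proposal is correct and follows the paper's route: Beckner's differentiation in $\alpha$ of the sharp directional Pitt inequality (Proposition~\ref{intro.thm:Pittpq2}) at $\alpha=0$, where unitarity of $\widehat S$ gives equality and $\partial_\alpha K_{r,\alpha}|_{\alpha=0}=2\ln\pi-2\psi(r/4)=-2\tilde K_r$; your monotone/dominated convergence justification, which the paper leaves implicit, goes through because both log-integrals are finite for $f\in\mathcal S(\rd)$. Tidy the write-up by deleting the first, incorrect display for $\partial_\alpha K_{r,\alpha}|_0$ and the remarks that an infinite limit ``strengthens'' the inequality or yields it ``in the extended sense'' (a value $-\infty$ for the $x$-side log-integral would make \eqref{LogUPforS} fail), since your finiteness argument makes those remarks unnecessary.
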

    We remark that Propositions \ref{thm:main1} $(ii)$, \ref{intro.thm:Pittpq2} and \ref{intro.logUP} in the case of $r=n$ are already present in the literature, see \cite{Jing2020NDimensional}. Nevertheless, we provide their proofs for the sake of completeness.
    
\medskip
\noindent
{\bf A full classification of radial Pitt's inequality.}
It is possible to `decouple' the effective from the singular directions, yielding a form more reminiscent of the structure of Pitt's inequality for the Fourier transform, but further from the geometry of metaplectic operators. 

It is in this radial form, devoid of the directional dependence, that we present our main result: a full classification of Pitt's inequality in the form
\begin{equation}\label{Pittgeneral}
            \left(\int_{\rd}|\xi|^{-\beta q}|\widehat Sf(\xi)|^q\rmd\xi\right)^{1/q}\leq K_S(p,q,\alpha,\beta,r,n)
        \left(\int_{\rd}|x|^{\alpha p}|f(x)|^p\rmd x\right)^{1/p},
        \end{equation}
    for $1\leq p,q\leq\infty$, $\alpha,\beta\geq0$.
Anticipating Theorem \ref{intro.thm:mainmetap} below, the class of parameters $(p,q,\alpha,\beta,r,n)$ for which \eqref{Pittgeneral} holds is the following.

\begin{definition}[Pitt-admissible datum]\label{intro.defPittAdmissible}
    Let $0\leq r\leq n$, $1\leq p,q\leq\infty$ and $\alpha,\beta\geq0$. We say that the datum $(p,q,\alpha,\beta,r,n)$ is {\em Pitt-admissible} if they satisfy one of (i)-(iv) below:
    \begin{enumerate}[(i)]
        \item \label{Pitt_admiss_i} $r=0$, $\alpha=\beta=0$ and $1\leq p=q\leq\infty$.
        \item \label{Pitt_admiss_ii} $r=n$, 
        \begin{equation}
        \begin{split}
    &\textnormal{(a)}\qquad 1 < p \leq q < \infty, \qquad 0 \leq \beta < \frac{n}{q}, \qquad 0\leq\alpha < \frac{n}{p'}, \qquad \text{and} \quad \beta = \alpha + n \left(\frac{1}{q}- \frac{1}{p'}\right),\\
    &\textnormal{(b)}\qquad p=1,\qquad q=\infty,\qquad \text{and} \qquad \alpha=\beta=0.
    \end{split}
\end{equation}
\item \label{Pitt_admiss_iii} $1\leq r<n$, and
\begin{equation}\label{NecCondSfglob}
                 1<p=q<\infty, \qquad  \alpha-\beta=r\left(1-\frac 2 p\right),\qquad 0\leq\alpha+\beta<r.
            \end{equation}
            \item \label{Pitt_admiss_iv} $1\leq r<n$, and $1\leq q< p\leq\infty$, and
    \begin{align}
        \left|\alpha-\beta-r\left(1-\frac 1p-\frac 1q\right)\right|&<(n-r)\left(\frac 1q-\frac1p\right),\\
        \alpha+\beta&>(n-r)\left(\frac 1q-\frac 1p\right)+r\max\left\{\frac 1q-\frac 1p,\left|1-\frac 1p-\frac1q\right|\right\},\\
        \alpha+\beta&<r+n\left(\frac 1q-\frac 1p\right).
    \end{align}
\end{enumerate}
\end{definition}
We defer to Section \ref{sec:PittForPartial}, in particular to Figures \ref{fig:PFT-necessary-1} and \ref{fig:PFT-necessary-2}, for a geometric discussion of what the system of inequalities of item \eqref{Pitt_admiss_iv} encapsulate. We also refer to Remarks \ref{rem:constants}, \ref{rem:Step344} and \ref{remarkmatrices} for the exhaustive discussions about the constant $K_S$ in \eqref{Pittgeneral}.

\begin{theorem}[Radial Pitt's inequality for metaplectic operators]\label{intro.thm:mainmetap}
Pitt's inequality \eqref{Pittgeneral} holds if and only if $(p,q,\alpha,\beta,r,n)$ is Pitt-admissible. Moreover, if $B=O$, then
    \begin{align*}
                    \left(\int_{\mathbb{R}^{n}}|x|^{\gamma p}|\widehat{S}f(x)|^{p} \rmd x\right)^{1/p}
                        &\leq |\det(A)|^{\frac 1p-\frac 12} \max\{\sigma_{\max}(A)^\gamma ,\sigma_{\min}(A)^\gamma \}\left(\int_{\mathbb{R}^{n}}|x|^{\gamma p}|f(x)|^{p} \rmd x\right)^{1/p}
                \end{align*}
                for every $\gamma\in\bR$ and $0<p<\infty$. 
\end{theorem}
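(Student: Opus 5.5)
The plan is to reduce, via the structure of metaplectic operators, to a model operator and then handle the four regimes of Definition \ref{intro.defPittAdmissible} separately.

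\textbf{Step 1: normal form.} For $B\neq O$ I would use the standard factorisation of $\widehat S$ (as in \cite{Giacchi,CGM2025}). Up to unimodular chirp multiplications $e^{i\pi Qx\cdot x}$, linear changes of variables $f\mapsto |\det L|^{1/2}f(L\cdot)$, and an orthogonal splitting $\rd=\bR^r\times\bR^{n-r}$ adapted to $\ker(B)^\perp$ and $D^\top A(\ker(B))$, the operator $\widehat S$ becomes the partial Fourier transform $\mathcal F_1$ in the first $r$ variables. Chirps do not change $|\cdot|$, so they do not affect either side of \eqref{Pittgeneral}. Invertible linear maps change $|x|^{\alpha}$ and $|\xi|^{-\beta}$ only by factors between powers of $\sigma_{\min}$ and $\sigma_{\max}$, together with Jacobian factors. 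Hence \eqref{Pittgeneral} for $\widehat S$ is equivalent, up to constants depending on $S$, to the same inequality for $\mathcal F_1$ with the same datum. For $B=O$ one has $\widehat Sf(x)=c\,e^{i\pi Qx\cdot x}f(A^{-1}x)$ with $|c|=|\det A|^{-1/2}$. The explicit bound in the statement then follows from the substitution $x=Ay$ and $|Ay|^\gamma\le\max\{\sigma_{\max}(A)^\gamma,\sigma_{\min}(A)^\gamma\}|y|^\gamma$. For the characterisation in case (i), scaling $f\mapsto f(\lambda\cdot)$ forces $\alpha=\beta$ and $p=q$, and translation arguments (weights $|x|^{\alpha}$ versus $|\xi|^{-\beta}$ of opposite sign) force $\alpha=\beta=0$. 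The case $r=n$ is Theorem \ref{thm:PittpqFT} directly.

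\textbf{Step 2: sufficiency for $\mathcal F_1$, $1\le r<n$.} Write $x=(x_1,x_2)$ and $\xi=(\xi_1,x_2)$.

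In case (iii), $p=q$, so Fubini lets me apply the weighted one-variable Pitt inequality in $x_1$ for each fixed $x_2$. The problem is that the weights are radial in the full variable, so $|(\xi_1,x_2)|^{-\beta}$ and $|(x_1,x_2)|^{\alpha}$ must be compared with mixed weights. I would use a Stein–Weiss/Pitt inequality with weights $(|x_1|^2+t^2)^{\alpha/2}$, uniform in $t=|x_2|$, obtained by scaling $t$ out. I expect this to need exactly $\alpha-\beta=r(1-2/p)$ and $\alpha+\beta<r$.

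In case (iv), $q<p$, so I would instead apply Pitt in $x_1$ with exponents $(p,p)$ or $(p,\tilde q)$, and then use Hölder in $x_2$ to pass from $L^p_{x_2}$ to $L^q_{x_2}$. The weight $|x_2|^{-(\beta\text{ part})}$ must absorb the loss $(n-r)(1/q-1/p)$. Splitting the weights as $|x|^{\alpha}\gtrsim |x_1|^{a_1}|x_2|^{a_2}$ with $a_1+a_2=\alpha$ (valid for nonnegative exponents) gives a family of admissible splittings. The strict inequalities in (iv) should be precisely the conditions for some splitting to satisfy both the local integrability in $x_2$ and the $r$-dimensional Pitt range.

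\textbf{Step 3: necessity.} I would test with products $f=g(x_1)h(x_2)$ and with the following deformations:
\begin{itemize}
\item separate dilations in $x_1$ and $x_2$;
\item translations in $x_2$, which are preserved by $\mathcal F_1$;
\item Gaussians concentrated near the origin in $x_2$ or spread out.
\end{itemize}
Scaling invariance in $x_1$ alone versus $x_2$ alone gives the linear constraints, which appear as the equality in (iii) and the two-sided bound in (iv). If $p<q$, a translation far away in $x_2$ combined with a dilation in $x_2$ should violate the inequality, since $\mathcal F_1$ is the identity in $x_2$. This rules out $p<q$ whenever $r<n$. The endpoint failures of integrability of $|\xi|^{-\beta q}$ near $0$ and of $|x|^{\alpha p}|f|^p$ at infinity give the upper and lower bounds on $\alpha+\beta$.

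I expect the main obstacle to be the exact matching in case (iv). The sufficiency construction via weight splitting must cover the full open polygon cut out by the three inequalities, and the counterexamples must exclude everything outside it, including the boundary. I would first check the vertices of the polygon explicitly, and then use interpolation with change of measure (Stein–Weiss) to fill its interior.
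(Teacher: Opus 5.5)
Your reduction of $\widehat S$ to the partial Fourier transform $\mathcal F_1$ is fine and matches the paper, as are the case $r=n$ and the explicit bound for $B=O$. The gaps are in the analysis of $\mathcal F_1$ itself, and the most serious one is sufficiency in regime (iv).

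\textbf{Sufficiency in regime (iv).} Replacing the radial weights by pure-power products $|x|^{\alpha}\ge|x_1|^{a_1}|x_2|^{a_2}$ and $|\xi|^{-\beta}\le|\xi_1|^{-b_1}|x_2|^{-b_2}$ cannot produce any estimate with $q<p$. Each fibre $x_2=\mathrm{const}$ would need $\||\xi_1|^{-b_1}\widehat g\|_{L^q(\bR^r)}\lesssim\||x_1|^{a_1}g\|_{L^p(\bR^r)}$ with $q<p$, and this fails for all $a_1,b_1$ by dilation in $\bR^r$ (Theorem \ref{thm:PittpqFT}). If you instead apply Pitt at $(p,\tilde q)$ with $\tilde q\ge p$, you must come back down to $L^q_{\xi_1}$ by H\"older against a power of $|\xi_1|$ lying in $L^{p_3}(\bR^r)$, $1/p_3=1/q-1/p$. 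No pure power lies in that space.

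The same obstruction appears in $x_2$. We have $|x_2|^{-c}\notin L^{p_3}(\bR^{n-r})$ for every $c$, because integrability at $0$ and at $\infty$ are incompatible. So ``local integrability in $x_2$'' is not the right condition.

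What makes $q<p$ possible is the \emph{inhomogeneity} of the fibre weights. For fixed $x_2\neq0$, the weights $(|\xi_1|^2+|x_2|^2)^{-\beta/2}$ and $(|x_1|^2+|x_2|^2)^{-\alpha/2}$ are bounded near the origin and decay at infinity. Hence Hausdorff--Young in $x_1$ at an exponent $s$, followed by two H\"older inequalities and a rescaling by $|x_2|$, gives a fibre bound with constant $|x_2|^{\kappa(s)}$, where $\kappa(s)=r(\frac1s-\frac1p)-\alpha+r(\frac1q-\frac1{s'})-\beta$.

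Since $|x_2|^{\kappa(s)}$ lies in $L^{p_3}$ near $0$ or near $\infty$ but never both, you need two exponents. Take them slightly on either side of the critical $s_\ast$ with $\kappa(s_\ast)p_3=-(n-r)$, one for $|x_2|\le1$ and one for $|x_2|>1$. The six strict inequalities in (iv) are exactly $1<s_\ast<\min\{2,p,q'\}$ together with finiteness of the two fibre-weight norms.

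Your fallback of verifying the vertices and interpolating does not help. The region in (iv) is open and its boundary, vertices included, is genuinely excluded. Interpolating the diagonal estimates of (iii) only yields diagonal estimates. For (iii), by contrast, no uniform inhomogeneous Stein--Weiss inequality is needed: $|(\xi_1,x_2)|^{-\beta}\le|\xi_1|^{-\beta}$ and $|x_1|^{\alpha}\le|(x_1,x_2)|^{\alpha}$ reduce it to fibrewise $r$-dimensional Pitt.

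\textbf{Necessity.} Your tests (independent dilations, translations in $x_2$, unmodulated Gaussians) only involve functions whose $x_1$- and $\xi_1$-scales are reciprocal. Such functions yield the conditions of type (B), (C), (D), (G$_\pm$) but not (F), namely $\alpha+\beta>n(\frac1q-\frac1p)$. For (F) you need functions spread at a scale $T$ simultaneously in $x_1$ and in $\xi_1$. The paper uses the sum $P_T$ of translated and modulated packets. A chirped Gaussian $e^{i\pi|x_1|^2}e^{-\pi|x|^2/T^2}$ also works: its partial Fourier transform has modulus $\approx e^{-\pi|(\xi_1,x_2)|^2/T^2}$, which gives $T^{n/q-\beta}\lesssim T^{n/p+\alpha}$.

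Moreover, single test functions only give the non-strict inequalities. You flag the boundary as the main obstacle but offer no mechanism. The paper's mechanism is a superposition of $N$ rescaled copies with disjoint $x_2$-supports, which $\mathcal F_1$ preserves. This gives $N^{1/q}\lesssim N^{1/p}$, which is impossible for $q<p$.

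The same missing idea affects your case $B=O$. Scaling gives only $\alpha+\beta=n(\frac1q-\frac1p)$, not $p=q$ and $\alpha=\beta$. Translations give nothing, since they compare $|x_0|^{-\beta}$ with $|x_0|^{\alpha}$. On that line with $q<p$, no single dilated or translated bump violates the inequality. You need a multiscale example such as a smoothing of $|x|^{-\alpha-n/p}\mathbf 1_{\{2<|x|<R\}}$, which forces $(\log R)^{1/q}\lesssim(\log R)^{1/p}$.
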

The importance of Theorem \ref{intro.thm:mainmetap} is not only that the admissible range is sharp, but also that it makes explicit how the geometry of a metaplectic operator determines the possible weighted estimates. At the two extreme values of the rank, one recovers two familiar behaviours: when $r=0$ the metaplectic operator acts without Fourier-type dispersion, whereas for $r=n$ the admissible range is the classical one for Pitt's inequality. 
The directional phenomenon occurs when $1\leq r<n$, where the distinction between effective and singular directions is directly reflected in the admissible exponents: the case $p=q$ is governed by the $r$-dimensional Pitt relation \eqref{assumptionPitt}, while the isotropic weights may also act on the singular directions and allow the additional regime $q<p$ described in \eqref{Pitt_admiss_iv}. Thus, the rank of $B$ does not merely enter the constants in the estimates, but determines the structure of the admissible region itself.
This mechanism is already present in the model case of the Fourier transform along a subspace, studied in Theorem \ref{prop:PFT-necessary}, and this model plays a central role in the proof of the full classification. 

\medskip
\noindent
{\bf Boundedness on homogeneous Sobolev spaces and applications to Schr\"odinger evolutions.}
For $-\infty<\alpha<n/2$, the {\em homogeneous Sobolev space} $\dot H^\alpha(\rd)$ contains the tempered distributions $f\in\cS'(\rd)$ so that $\hat f\in L^1_{loc}(\rd)$ and
\begin{equation}
    \Vert f\Vert_{\dot H^\alpha}^2=\int_{\rd}|\xi|^{2\alpha}|\widehat f(\xi)|^2\rmd \xi<\infty.
\end{equation}
Applied to $\widehat f\in\cS(\rd)$, our sharp Pitt's inequality yields a boundedness result for metaplectic operators on homogeneous Sobolev spaces.

\begin{theorem}[Boundedness on $\dot H^\alpha$ spaces]\label{intro.thmHalpha}
    The following statements hold true.
    \begin{enumerate}[(i)]
        \item {\bf Case $C=O$.} It holds that $\widehat S:\dot H^\alpha(\rd)\to\dot H^\alpha(\rd)$ continuously for every $\alpha<n/2$.
        \item {\bf Case $C\neq O$.} It holds that $\widehat S:\dot H^\alpha(\rd)\to \dot H^{-\alpha}(\rd)$ continuously for every $0\leq\alpha<\mathrm{rank}(C)/2$.
    \end{enumerate}
\end{theorem}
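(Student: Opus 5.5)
Conjugating by the Fourier transform reduces both statements to weighted inequalities for a metaplectic operator. Write $\Vert \widehat S f\Vert_{\dot H^\alpha}^2=\int|\xi|^{2\alpha}|\mathcal F\widehat S f(\xi)|^2\rmd\xi$ and set $g=\widehat f$. Then $\mathcal F\widehat S f=\widehat{T}g$ with $\widehat T=\mathcal F\widehat S\mathcal F^{-1}\in\Mp(n,\bR)$. The projection of $\mathcal F$ is $J=\begin{pmatrix}O&I\\-I&O\end{pmatrix}$, so the projection of $\widehat T$ is $T=JSJ^{-1}=\begin{pmatrix}D&-C\\-B&A\end{pmatrix}$. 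Its upper-right block is $-C$. Both claims therefore become weighted $L^2$ estimates for $\widehat T$ acting on $g$, with the block $-C$ playing the role of $B$ in the results above.

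\textbf{Case $C=O$.} Here $T$ has zero upper-right block. By Theorem \ref{intro.thm:mainmetap} (the $B=O$ part, with $p=2$ and $\gamma=\alpha$),
\[
\int|\xi|^{2\alpha}|\widehat T g(\xi)|^2\rmd\xi\le \max\{\sigma_{\max}(D)^{2\alpha},\sigma_{\min}(D)^{2\alpha}\}\int|\xi|^{2\alpha}|g(\xi)|^2\rmd\xi .
\]
This gives $\Vert \widehat Sf\Vert_{\dot H^\alpha}\lesssim\Vert f\Vert_{\dot H^\alpha}$ for every real $\alpha$, first on the dense class of $f$ with $\widehat f\in\cS$. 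The restriction $\alpha<n/2$ only ensures that $\dot H^\alpha$ is a space of tempered distributions with $\widehat f\in L^1_{loc}$. To extend, I would use that $\widehat T g$ is, up to a constant, $|\det D|^{1/2}$ times a chirp times $g(D^\top\cdot)$. This is a pointwise formula, so it extends to all $g\in L^1_{loc}$ and $\widehat T g$ stays locally integrable. Hence $\widehat S$ maps $\dot H^\alpha$ to itself boundedly.

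\textbf{Case $C\neq O$.} Now $r'=\mathrm{rank}(-C)=\mathrm{rank}(C)\ge1$, and we need
\[
\int|\xi|^{-2\alpha}|\widehat T g(\xi)|^2\rmd\xi\lesssim\int|\xi|^{2\alpha}|g(\xi)|^2\rmd\xi .
\]
If $r'=n$, this is Pitt-admissible datum (ii)(a) with $p=q=2$ and $\alpha=\beta<n/2$. If $1\le r'<n$, datum (iii) with $p=q=2$ requires $\alpha-\beta=0$ and $0\le 2\alpha<r'$, which is exactly $0\le\alpha<\mathrm{rank}(C)/2$. Theorem \ref{intro.thm:mainmetap} thus gives the estimate for $g\in\cS$. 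For the density step, we have $\widehat f\in L^1_{loc}$ with finite weighted norm. I would approximate $\widehat f$ by Schwartz functions in $L^2(|\xi|^{2\alpha}\rmd\xi)$, which is possible since $\alpha<n/2$ makes the weight locally integrable. The estimate then yields a Cauchy sequence of $\widehat T g_k$ in $L^2(|\xi|^{-2\alpha}\rmd\xi)\subset L^1_{loc}$. Its limit identifies $\mathcal F\widehat S f$, because $\widehat S$ is continuous on $\cS'$.

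\textbf{Main obstacle.} The difficulty is not the inequality itself but bookkeeping. One must check that conjugation by $\mathcal F$ puts $-C$ in the correct block under the paper's conventions for the projection. One must also justify the density and extension arguments so that the operator is well defined on all of $\dot H^\alpha$. The sharp constants are tracked through Remark \ref{remarkmatrices}; equivalently, one can use the directional Proposition \ref{intro.thm:Pittpq2} together with bounds comparing $|C^+\mathcal P\,\xi|$ with $|\xi|$.
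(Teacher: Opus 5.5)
Your proposal is correct and follows the paper's proof. You conjugate by $\mathcal F$ to get $S^\flat=JSJ^{-1}$, whose upper-right block is $-C$, and apply Theorem \ref{intro.thm:mainmetap} to $\widehat{S^\flat}\widehat f=\mathcal F\widehat Sf$ with $p=q=2$ (its $B=O$ part when $C=O$, and the Pitt-admissible data with $\alpha=\beta$ otherwise); your density and extension step is spelled out in more detail than the paper's. One harmless slip: for $C=O$, formula \eqref{repformulaBO} applied to $S^\flat$ gives $|\det D|^{-1/2}$ times a chirp times $g(D^{-1}\cdot)$ (where $D^{-1}=A^\top$), not $|\det D|^{1/2}$ times $g(D^\top\cdot)$; only the fact that it is a chirp times an invertible linear change of variables is used, so the argument is unaffected.
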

We address Theorem \ref{thm:metaplecticSobolev} below for the precise statement, where estimates of the operator norms are also provided. 
Boundedness results of this type are usually obtained within the so-called {\em modulation} or {\em Wiener amalgam spaces}, which provide the natural framework for metaplectic operators. Results can be obtained also for {\em Shubin-Sobolev spaces}, which are particular instances of modulation spaces. See for example, \cite{CGPT2026,corderobook,DeGosson,FS2024,giacchi2026metaplectic}, and the references therein.
Since homogeneous Sobolev spaces are not modulation spaces, to the best of our knowledge, Theorem \ref{intro.thmHalpha} is the first positive boundedness result for metaplectic operators outside time-frequency and quantum harmonic analysis.

Theorem \ref{intro.thmHalpha}, together with the operator norms estimates in Theorem \ref{thm:metaplecticSobolev} below, apply to Schr\"odinger evolutions \eqref{Schro} subject to real quadratic Hamiltonians. Together with applications of the entropic uncertainty principle, these estimates are derived in Section \ref{sec:SchrodingerApplications} for the free particle equation, the quantum harmonic oscillator, the uniform magnetic potential equation and the anisotropic harmonic oscillator. In particular, the last equation treats the different directions separately, and it is precisely in this setting that the metaplectic framework becomes essential, providing a unified theory in which anisotropic equations can be treated without introducing direction-dependent arguments.

\medskip
\noindent
{\bf Outline.}
The paper is organised as follows. Section \ref{sec:prelim} collects the notation and the basic facts on symplectic and metaplectic operators that will be used throughout. Section \ref{sec:EUP} is devoted to the entropic uncertainty principle for metaplectic operators, while the complete characterisation of the radial Pitt's inequality for Fourier transforms along subspaces is obtained in Section \ref{sec:PittForPartial}. In Section \ref{sec:Pitt}, we use this characterisation to establish the radial Pitt's inequality for general metaplectic operators and then derive its directional counterpart, finally obtaining boundedness estimates for metaplectic operators on homogeneous Sobolev spaces. Section \ref{sec:SchrodingerApplications} applies the entropic and Sobolev estimates to Schr\"odinger evolutions generated by quadratic Hamiltonians. Throughout this work we shall use change of variables along subspaces and direct sums, and we have collected the necessary formulae in Appendix \ref{appendixA}.

\section{Preliminaries}\label{sec:prelim}
\subsection{Notation}
We shall denote by $x\cdot y$, $x,y\in\rd$, the standard inner product on $\rd$. We denote by $\bR^{m\times n}$ the space of $m\times n$ matrices with real entries, with $\mathrm{GL}(n,\bR)$ the space of $n\times n$ invertible matrices, and with $\mathrm{Sym}(n,\bR)$ the class of $n\times n$ symmetric matrices. 
If $M\in\bR^{m\times n}$, we shall denote by $R(M)$ its range. We denote by $I_n$ and $O_n$ the $n\times n$ identity and zero matrix, respectively, and we shall omit $n$ when it does not cause confusion.

If $\mathcal{J}\subseteq\{1,\ldots,n\}$, we denote by $I_{\mathcal{J}}$ the diagonal matrix with $j$-th diagonal entry $=1$ if $j\in\mathcal{J}$ and $0$ otherwise. Similarly, by denoting $\mathcal{J}^c=\{1,\ldots,n\}\setminus\mathcal{J}$, we write $I_{\mathcal{J}^c}=I_n-I_{\mathcal{J}}$.
If $M\in\bR^{n\times n}$, we denote by $M_{\ast\mathcal{J}}$ the submatrix containing the columns of $M$ indexed by $\mathcal{J}$.

We denote by $M^+$ the Moore-Penrose (pseudo-)inverse of $M\in\mathbb{R}^{n\times n}$, see \cite{MatrixAnalysis}. 
Recall that if $M=U\Sigma V^\top$ is a singular value decomposition of $M$, with $\Sigma=\mathrm{diag}(\sigma_1,\ldots,\sigma_n)$ the diagonal matrix with the singular values of $M$, then $M^+=V\Sigma^+U^\top$, where $\Sigma^+=\mathrm{diag}(\sigma_1^{-1},\ldots,\sigma_d^{-1})$ (upon interpreting $1/0:=0$). Concerning singular values, we denote by $\sigma_{\min}(M)$ and $\sigma_{\max}(M)$ the smallest and the largest singular values of $M$. Moreover, we denote by $\sigma_{\min}(M)_{>0}$ the smallest non-zero singular value of $M$. The product of the non-zero singular values of $M$ is denoted by $\sigma(M)$.

The direct sum of the subspaces $\mathcal{L}_{1,2}\subseteq\rd$ is denoted by $\mathcal{L}_1\oplus\mathcal{L}_2$.
If $\mathcal{L}_1,\mathcal{L}_2\subseteq\rd$ are subspaces with $\rd=\mathcal{L}_1\oplus\mathcal{L}_2$, we denote by $\mathcal{P}_{\mathcal{L}_1}^{\mathcal{L}_2}x$ the projection of $x\in\rd$ onto $\mathcal{L}_1$ along $\mathcal{L}_{2}$. Explicitly, if $x=x_{1}+x_{2}\in\mathcal{L}_{1}\oplus\mathcal{L}_{2}$, then
\begin{equation*}
    \mathcal{P}_{\mathcal{L}_1}^{\mathcal{L}_2}x=x_{1}\qquad\textnormal{and}\qquad\mathcal{P}_{\mathcal{L}_2}^{\mathcal{L}_1}x=x_{2}.
\end{equation*}
We shall always consider these projections as operators from $\rd$ to itself. We shall therefore denote by $\|\mathcal{P}_{\mathcal{L}_1}^{\mathcal{L}_2}\|=\|\mathcal{P}_{\mathcal{L}_1}^{\mathcal{L}_2}\|_{\ell^2\to\ell^2}$ the spectral norm of $\mathcal{P}_{\mathcal{L}_1}^{\mathcal{L}_2}$. 
If $\mathcal{L}_2=\mathcal{L}_1^\perp$, we simplify the notation to $\mathcal{P}_{\mathcal{L}_1}x$ (orthogonal projection). With the pseudo-inverse notation, $M^+Mx=\mathcal{P}_{\ker(M)^\perp}x$, whereas $MM^+x=\mathcal{P}_{R(M)}x$. In particular, if $V:\mathbb{R}^r\to\mathcal{L}$ is a parametrisation of the linear subspace $\mathcal{L}\subseteq\rd$ which maps an orthonormal basis of $\mathbb{R}^r$ into an orthonormal basis of $\mathcal{L}$, i.e., $V^\top V=I_r$, then $V^+=V^\top$.

We denote by $\|f\|_{L^p(\rd)}$ the $L^p(\rd)$-norm of $f$, and we write $\|f\|_p$ whenever the dimension can be omitted without causing confusion. If necessary, we shall use notations such as $\|\cdot\|_{L^p_x}$ or $\|f(x)\|_{L^p_x}$, to clarify the variables of integration.
The tensor product of two functions $f,g$ is $f\otimes g(x,y)=f(x)g(y)$. 

\subsection{The symplectic group}
Standard references for the following theory are \cite{corderobook,DeGosson,grochenigbook}.
Let 
    \begin{equation}
        J=\begin{pmatrix}
            O & I\\
            -I & O
        \end{pmatrix}
    \end{equation}
    be the matrix of the canonical symplectic form of $\rdd$. A matrix
    \begin{equation}\label{blockS}
        S=\begin{pmatrix}
            A & B\\
            C & D
        \end{pmatrix}, \qquad A,B,C,D\in\bR^{n\times n}
    \end{equation}
    is {\it symplectic} if $S^\top J S=J$, i.e.,
    \begin{align}
        \label{symplRel1}
        & A^\top C = C^\top A,\\
        \label{symplRel2}
        & B^\top D = D^\top B,\\
        \label{symplRel3}
        & A^\top D-C^\top B=I.
    \end{align}
    We denote by $\Sp(n,\bR)$ the group of $2n\times 2n$ symplectic matrices. A matrix $S\in\Sp(n,\mathbb{R})$ is {\it free} if $\det(B)\neq0$. For $E\in\mathrm{GL}(n,\bR)$ and $Q\in\mathrm{Sym}(n,\bR)$, we consider
    \begin{equation}
        \cD_E=\begin{pmatrix}
            E^{-1} & O\\
            O & E^\top
        \end{pmatrix}, \qquad V_Q=\begin{pmatrix}
            I & O\\
            Q & I
        \end{pmatrix}.
    \end{equation}
    Then, $\Sp(n,\bR)$ is generated by
    \begin{equation}
        \{J\}\cup \{\cD_E:E\in\mathrm{GL}(n,\bR)\}\cup\{V_Q:Q\in\mathrm{Sym}(n,\bR)\}.
    \end{equation}
    We shall use the following result from \cite{CGM2025,TMO}.
    \begin{proposition}\label{PropIsomorphisms}
        Let $S\in\Sp(n,\bR)$ have blocks \eqref{blockS}. Then,
        \begin{enumerate}[(i)]
            \item $A:\ker(B)\to A(\ker(B))$ is an isomorphism.
            \item $A^\top:R(B)^\perp\to\ker(B)$ is an isomorphism.
            \item $D^\top A:\ker(B)\to D^\top A(\ker(B))$ is an isomorphism.
            \item $AA^\top:R(B)^\perp\to A(\ker(B))$ is an isomorphism.
        \end{enumerate}
    \end{proposition}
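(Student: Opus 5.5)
We prove the four items directly from the symplectic relations \eqref{symplRel1}--\eqref{symplRel3}, using only linear algebra. Transposing $S^\top JS=J$ and using $S^{-1}=-JS^\top J$, we also get $SJS^\top=J$. Written in blocks, this gives the ``row'' relations
\[
AB^\top=BA^\top,\qquad CD^\top=DC^\top,\qquad AD^\top-BC^\top=I.
\]
In every item, injectivity on the given domain, together with the fact that the target is by definition the image (or has matching dimension), yields the isomorphism.

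\textbf{Item (i).} Let $v\in\ker(B)$ with $Av=0$. Then the vector $(v,0)$ satisfies $S(v,0)^\top=(Av,Cv)=(0,Cv)$. Since $S$ is invertible, $Cv\neq0$ unless $v=0$. To rule out $v\neq0$ with $Av=0$, apply \eqref{symplRel3}:
\[
v=(A^\top D-C^\top B)v.
\]
This does not directly use $Av=0$, so instead we use the columns. The first $n$ columns of $S$ and the last $n$ columns together span $\mathbb{R}^{2n}$. If $Av=0$ and $Bv=0$, then $\begin{pmatrix}A&B\end{pmatrix}$ annihilates both $(v,0)$ and $(0,v)$. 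However, $\begin{pmatrix}A&B\end{pmatrix}$ has rank $n$, being the top block row of an invertible matrix, so this alone does not conclude either. The cleaner route is the row relation $AD^\top-BC^\top=I$ transposed, namely
\[
DA^\top-CB^\top=I.
\]
Even better, we use \eqref{symplRel3} in the form $v=D^\top Av-B^\top Cv$. We then exploit the fact that $\ker(B)\perp R(B^\top)$. Taking the inner product with $v$ gives
\[
|v|^2=D^\top Av\cdot v-Cv\cdot Bv=D^\top Av\cdot v,
\]
since $Bv=0$. Hence $Av=0$ forces $v=0$. The same computation proves item (iii): if $D^\top Av=0$ then $|v|^2=0$.

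\textbf{Item (ii).} Take $w\in R(B)^\perp$, i.e. $B^\top w=0$. First, the image lands in $\ker(B)$: we have $BA^\top w=AB^\top w=0$ by the row relation $AB^\top=BA^\top$. For injectivity, apply the relation $AD^\top-BC^\top=I$ transposed, i.e. $DA^\top-CB^\top=I$, to $w$. This gives $w=DA^\top w$, so $A^\top w=0$ implies $w=0$. Surjectivity follows from dimensions: $\dim R(B)^\perp=n-r=\dim\ker(B)$.

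\textbf{Item (iv).} The composition of (ii) and (i) is
\[
R(B)^\perp\xrightarrow{A^\top}\ker(B)\xrightarrow{A}A(\ker(B)),
\]
and both maps are isomorphisms, so $AA^\top$ is an isomorphism between these spaces.

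The only delicate point is deriving the row relations from the column ones. These follow from $S^{-1}=J^{-1}S^\top J$, whence $SJS^\top=J$. Everything else is a short inner-product argument.
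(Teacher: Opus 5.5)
The paper does not prove this proposition; it takes it from \cite{CGM2025,TMO}, so there is no in-paper argument to match, and your proof has to stand on its own. It does: it is correct and self-contained.

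\textbf{Checks.} The row relations follow from $S^{-1}=J^{-1}S^\top J$. The word ``transposing'' is superfluous, since transposing $S^\top JS=J$ only returns the same relation. Item (ii) is right via $BA^\top=AB^\top$, $DA^\top-CB^\top=I$ and rank--nullity. Item (iv) is the composition of (ii) and (i).

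\textbf{Comparison with the paper.} The heart of your argument is the identity $|v|^2=D^\top Av\cdot v=Av\cdot Dv$ for $v\in\ker(B)$, which comes from $D^\top A=I+B^\top C$. The paper uses this same mechanism later without comment: it is $E_2^\top D^\top AE_2=I_{n-r}$ in the appendix, and $\mathcal{P}_{\ker(B)}D^\top Ay=y$ in Step 3.2 of the proof of Theorem \ref{intro.thm:mainmetap}. Your route is therefore consistent with the rest of the paper.

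\textbf{What your route adds.} The same identity gives the transversality behind the decompositions \eqref{symplDecom1} and \eqref{symplDecom2}, which the paper states as consequences of the proposition. Dimension counting from (i) and (iii) alone does not give trivial intersection. With your identity it does:
\begin{itemize}
\item if $D^\top Av\in\ker(B)^\perp$ with $v\in\ker(B)$, pairing with $v$ gives $|v|^2=0$;
\item if $Av=Bw$ with $v\in\ker(B)$, then $|v|^2=Av\cdot Dv=w\cdot B^\top Dv=w\cdot D^\top Bv=0$.
\end{itemize}

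\textbf{Presentation.} The first half of your item (i) should be deleted: the observation that $Cv\neq0$, the remark about the column span of $\begin{pmatrix}A&B\end{pmatrix}$, and the sentence ``this does not directly use $Av=0$''. These are true but lead nowhere, and they obscure the proof. The inner-product computation alone settles (i) and (iii).
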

    Consequently, we have the following decompositions into direct sums:
    \begin{align}
    \label{symplDecom1}
    &\rd=\ker(B)^\perp\oplus D^\top A(\ker(B)),\\
    \label{symplDecom2}
	&\rd=R(B)\oplus A(\ker(B))\\
    \label{symplDecom3}
    &\rd=R(B)\oplus AA^\top(R(B)^\perp).
    \end{align} 
    Other than free ones, there are other important instances of symplectic matrices. 
    \begin{definition}\label{defSympInterchange}
    A {\em symplectic interchange} is a symplectic matrix in the form
        \begin{equation}\label{defPiJ}
            \Pi_{\mathcal{J}}=\begin{pmatrix}
            I_{\mathcal{J}^c} & I_\mathcal{J}\\
            -I_\mathcal{J} & I_{\mathcal{J}^c}
            \end{pmatrix}, \qquad \mathcal{J}\subseteq\{1,\ldots,n\}.
    \end{equation}
    \end{definition}
 Observe that for $\mathcal{J}=\{1,\ldots,n\}$, we retrieve $\Pi_{\mathcal{J}}=J$.

\subsection{The metaplectic group}
For $x,\xi\in\rd$ and $\tau\in\bR$, let us consider the unitary operator on $L^2(\rd)$
    \begin{equation}
        \rho(x,\xi;\tau)f(t)=e^{2\pi i\tau}e^{-i\pi x\cdot\xi}e^{2\pi i\xi \cdot t}f(t-x), \qquad f\in L^2(\rd),
    \end{equation}
    the {\it Schrödinger representation of the Heisenberg group}. For every $S\in\Sp(n,\bR)$ there exists an $\widehat S$ unitary on $L^2(\rd)$ such that
    \begin{equation}\label{intertS}
       \widehat S \rho(x,\xi;\tau)\widehat S^{-1}=\rho(S(x,\xi);\tau), \qquad x,\xi\in\rd,\,\tau\in\bR.
    \end{equation}
    Such operators are called {\it metaplectic operators}.
    
    If $\widehat S$ satisfies \eqref{intertS}, then the unitary operators satisfying \eqref{intertS} are exactly those in the form $c\widehat S$ for some $c\in\mathbb{C}$, $|c|=1$ ({\it phase factor}). The group $\{\widehat S:S\in\Sp(n,\bR)\}$ has a subgroup that contains precisely two operators for each $S\in\Sp(n,\bR)$, differing by a sign. This subgroup is denoted by $\Mp(n,\bR)$ and it is called the {\it metaplectic group}. The projection $\pi^{Mp}:\widehat S\in\Mp(n,\bR)\mapsto S\in\Sp(n,\bR)$ is a double covering of $\Sp(n,\bR)$ and it is a homomorphism. For $E\in\mathrm{GL}(n,\bR)$ and $Q\in\mathrm{Sym}(n,\bR)$, we may consider 
    \begin{align}
    \label{defTE}
    &\mathfrak{T}_Ef(x)=i^m|\det(E)|^{1/2}f(Ex), \qquad f\in L^2(\rd),\\
    \label{defpQ}
        &\mathfrak{p}_Qf(x)=\Phi_Q(x)f(x), \qquad f\in L^2(\rd),
    \end{align}
   where $\Phi_Q(x)=e^{i\pi Qx\cdot x}$ is a {\it chirp}. The {\em Maslov index} $m$ is related to the argument of $\det(E)^{1/2}$. Let
    \begin{equation}
        \mathcal{F}f(\xi)=i^{-n/2}\int_{\rd}f(x)e^{-2\pi ix\cdot\xi}dx, \qquad f\in L^1(\rd),\; \xi\in\rd,
    \end{equation}
    be the Fourier transform operator (observe that we are adding the phase factor $i^{-n/2}$). For the purpose of the present contribution, the phase factors $i^{m}$ and $i^{-n/2}$ play a marginal role, and therefore they will be omitted as long as their omission does not cause confusion. 
    Moreover, if $\widehat S\in\Mp(n,\mathbb{R})$, it is implied that $S$ is its projection and $A,B,C$ and $D$ are its blocks in \eqref{blockS}, i.e., $S=\pi^{Mp}(\widehat S)$. If the notation causes ambiguities, we shall state this relation explicitly.
    
    Concerning the generators of $\Mp(n,\bR)$, we have that metaplectic operators are words formed using the alphabet
    \begin{equation}\label{generatorsMp}
        \{\cF\}\cup\{\mathfrak{T}_E:E\in\mathrm{GL}(n,\bR)\}\cup\{\mathfrak{p}_Q:Q\in\mathrm{Sym}(n,\bR)\}.
    \end{equation}
    This follows by the fact that every $\widehat S\in\Mp(n,\mathbb{R})$ can be factorised (non-uniquely) as $\widehat S=\widehat S_1\widehat S_2$, where the operators on the right-hand side have with free projections. 
    \begin{definition}\label{defQFT}
        Metaplectic operators with free projections are called {\em quadratic Fourier transforms}.
    \end{definition}
    The following continuity properties are then a straightforward consequence.
    \begin{proposition}
        Let $\widehat S\in\Mp(n,\bR)$. Then,
        \begin{enumerate}[(i)]
            \item $\widehat S:L^2(\rd)\to L^2(\rd)$ is unitary.
            \item $\widehat S$ restricts to a homeomorphism of $\cS(\rd)$.
            \item $\widehat S$ extends to a homeomorphism of $\cS'(\rd)$ by duality.
        \end{enumerate}
    \end{proposition}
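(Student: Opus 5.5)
The three properties all follow from one reduction. Since $\Mp(n,\bR)$ is generated by the alphabet \eqref{generatorsMp}, every $\widehat S$ is a finite composition of operators $\cF$, $\mathfrak{T}_E$ and $\mathfrak{p}_Q$. Each property in the statement (unitarity on $L^2$, being a homeomorphism of $\cS$, being a homeomorphism of $\cS'$) is stable under composition and inversion. It therefore suffices to check each property on the generators, whose inverses are again generators up to phase: $\mathfrak{T}_E^{-1}=\mathfrak{T}_{E^{-1}}$, $\mathfrak{p}_Q^{-1}=\mathfrak{p}_{-Q}$, and $\cF^{-1}$ is the conjugate Fourier transform, itself of the form $\mathfrak{T}_{-I}\cF$ up to phase.

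For (i), I would verify unitarity generator by generator.
\begin{itemize}
\item $\mathfrak{T}_E$: the change of variables $y=Ex$ gives $\|\mathfrak{T}_Ef\|_2^2=|\det E|\int|f(Ex)|^2\rmd x=\|f\|_2^2$, and $\mathfrak{T}_E$ is onto, with inverse $\mathfrak{T}_{E^{-1}}$.
\item $\mathfrak{p}_Q$: since $|\Phi_Q|=1$, it is an isometry, and it is invertible with inverse $\mathfrak{p}_{-Q}$.
\item $\cF$: unitarity is Plancherel's theorem.
\end{itemize}
Alternatively, (i) is already built into the definition, since \eqref{intertS} asks for $\widehat S$ unitary.

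For (ii), I would check that each generator maps $\cS(\rd)$ continuously into itself, with continuous inverse of the same type.
\begin{itemize}
\item $\mathfrak{T}_E$: seminorms $\sup|x^\alpha\partial^\beta f(Ex)|$ are controlled by finitely many seminorms of $f$, using $|x|\le\|E^{-1}\||Ex|$ and the chain rule.
\item $\mathfrak{p}_Q$: $\Phi_Q$ is smooth and all its derivatives have polynomial growth, so multiplication by $\Phi_Q$ is continuous on $\cS$ by the Leibniz rule.
\item $\cF$: it is classical that $\cF$ is a topological isomorphism of $\cS$.
\end{itemize}
Composing these gives that $\widehat S$ restricts to a homeomorphism of $\cS$. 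Well-definedness on $\cS$ is not an issue: the factorisation is non-unique, but every factorisation represents the same $L^2$ operator, and that operator agrees on $\cS$ with the composition of the restrictions.

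For (iii), I would define the extension by duality, $\langle \widehat S u,\varphi\rangle:=\langle u,\widehat S^{*}\varphi\rangle$ with the sesquilinear pairing. Here $\widehat S^*=\widehat S^{-1}=\widehat{S^{-1}}$ (up to sign) is again metaplectic, so by (ii) it is a homeomorphism of $\cS$. The transpose of a continuous linear map on $\cS$ is continuous on $\cS'$ for the weak-$*$ (and strong) topology, and its inverse is the transpose of $\widehat S$. Consistency with the $L^2$ action follows from unitarity, since $\langle \widehat Sf,\varphi\rangle=\langle f,\widehat S^*\varphi\rangle$ for $f\in L^2$.

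I expect no step to be a genuine obstacle. The only point needing care is that the extension to $\cS'$ is independent of the chosen word in the generators. This holds because it is determined by $\widehat S$ on the dense subspace $\cS$, where all factorisations agree.
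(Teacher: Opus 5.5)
Your proposal is correct and takes essentially the same route as the paper. The paper observes that every $\widehat S$ is a word in the generators $\cF$, $\mathfrak{T}_E$, $\mathfrak{p}_Q$ (via a factorisation into quadratic Fourier transforms) and calls the three properties a straightforward consequence; your generator-by-generator check, followed by composition and duality, spells this out, and your remarks on independence of the chosen word and on consistency of the $\cS'$ extension with the $L^2$ action supply details the paper leaves implicit.
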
 
    
The symplectic interchange $\Pi_{\mathcal{J}}$ is associated, up to a phase, with the {\em partial Fourier transform} with respect to the variables indexed by $\mathcal{J}$:
\begin{equation}\label{PartialFT}
    \mathcal{F}_{\mathcal{J}}f(x_{\mathcal{J}^c}+\xi_{\mathcal{J}})=i^{-r/2}\int_{R(I_{\mathcal J})}f(x)e^{-2\pi i\xi_{\mathcal{J}}x}\mathrm{d}x_{\mathcal{J}}, \qquad f\in\mathcal{S}(\rd),
\end{equation}
where, in general, $x_{\mathcal{J}}$ denotes the vector $(x_1,\ldots,x_n)$ with $x_j=0$ if $j\notin\mathcal{J}$, while $\mathrm{d}x_{\mathcal{J}}=\mathrm{d}_{x_{j_1}}...\mathrm{d}x_{j_{r}}$, where $\mathcal{J}=\{1\leq j_1<\ldots< j_{r}\leq n\}$.

We recall the following integral representation formula, due to ter Morsche and Oonincx, \cite{TMO}, see also \cite{CGM2025} for the formula reinterpreted with the present notation. Recall that the decomposition \eqref{symplDecom2} holds.

\begin{theorem}
Let $\widehat S\in\Mp(n,\bR)$. Let
\begin{equation}\label{defmus}
    \mu_S=\sqrt{\frac{1}{q_{R(B)^\perp}(A^\top)\sigma(B)}}.
\end{equation} 
The following expressions hold in $\mathcal{S}(\rd)$.
\begin{enumerate}[(i)]
    \item If $B=O$, then
    \begin{equation}\label{repformulaBO}
        \widehat Sf(\xi)=|\det(A)|^{-1/2}e^{i\pi CA^{-1}\xi\cdot\xi}f(A^{-1}\xi),\qquad \xi\in\rd.
    \end{equation}
    \item If $B\neq O$, for every $f\in\mathcal{S}(\rd)$ the identity
\begin{equation}\label{integralSf}
    \widehat Sf(\xi_1+\xi_2)=\mu_Se^{i\pi(DB^+\xi_1\cdot\xi_1+DC^\top\xi_2\cdot\xi_2)}\int_{\ker(B)^\perp}f(y+D^\top\xi_2)e^{i\pi B^+Ay\cdot y}e^{-2\pi i(B^+\xi_1-C^\top\xi_2)\cdot y}\mathrm{d}y,
\end{equation}
holds for $\xi=\xi_1+\xi_2\in\rd$, where $\xi_1\in R(B)$, $\xi_2\in A(\ker(B))$.
\item In particular, when $S$ is free, the integral representation \eqref{integralSf} simplifies:
\begin{equation}\label{integralSffree}
    \widehat Sf(\xi)=|\det(B)|^{-1/2}e^{i\pi DB^{-1}\xi\cdot\xi}\int_{\rd}f(y)e^{i\pi B^{-1}Ay\cdot y}e^{-2\pi iB^{-1}\xi \cdot y}\mathrm{d}y, \qquad f\in\mathcal{S}(\rd).
\end{equation}
\end{enumerate}
\end{theorem}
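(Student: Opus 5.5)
The plan is to treat the three items separately, but to organise everything around one principle. A unitary operator on $L^2(\rd)$ satisfying the intertwining relation \eqref{intertS} for a given $S$ is determined up to a unimodular phase. Since phases are being neglected throughout, it suffices in each case to exhibit the right-hand side as a unitary operator $T$ on $L^2(\rd)$ that satisfies $T\rho(z;\tau)=\rho(Sz;\tau)T$ for all $z\in\rdd$. It is enough to check this on the generators $\rho(x,0;0)$ and $\rho(0,\xi;0)$.

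\textbf{Items (i) and (iii).} These are obtained by factorisation into the generators \eqref{generatorsMp}.

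For (i): if $B=O$, then \eqref{symplRel3} gives $D=A^{-\top}$, and \eqref{symplRel1} makes $CA^{-1}$ symmetric. One checks directly that
\[
S=V_{CA^{-1}}\cD_{A^{-1}}.
\]
Hence $\widehat S=\pm\mathfrak p_{CA^{-1}}\mathfrak T_{A^{-1}}$, and \eqref{defTE}--\eqref{defpQ} give \eqref{repformulaBO} at once.

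For (iii): when $B$ is invertible, the matrices $DB^{-1}$ and $B^{-1}A$ are symmetric by \eqref{symplRel1}--\eqref{symplRel3}. One verifies the factorisation
\[
S=V_{DB^{-1}}\,\cD_{B}\,J\,V_{B^{-1}A},
\]
adjusting signs or transposes as dictated by the conventions for $\cD_E$. Composing the operators $\mathfrak p_{B^{-1}A}$, $\cF$, $\mathfrak T_{B}$ (appropriately inverted) and $\mathfrak p_{DB^{-1}}$ then produces \eqref{integralSffree}, with the factor $|\det B|^{-1/2}$ coming from $\mathfrak T$. Alternatively, (iii) is the special case $\ker(B)=\{0\}$ of (ii), since then $\xi_2=0$, $B^+=B^{-1}$, and $\mu_S=|\det B|^{-1/2}$.

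\textbf{Item (ii), intertwining.} Define $T f(\xi_1+\xi_2)$ by the right-hand side of \eqref{integralSf}, using the decomposition \eqref{symplDecom2}. I would verify the two intertwining identities directly.

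1. For $\rho(x,0;0)$, a translation by $x$, decompose
\[
x=\mathcal P_{\ker(B)^\perp}x+\mathcal P_{\ker(B)}x .
\]
The $\ker(B)^\perp$ part is absorbed by shifting the variable $y$. The $\ker(B)$ part is absorbed by shifting $\xi_2$ through $D^\top\xi_2$. This uses Proposition \ref{PropIsomorphisms}(iii), which makes $\xi_2\mapsto D^\top\xi_2$ an isomorphism $A(\ker B)\to D^\top A(\ker B)$.

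2. For $\rho(0,\xi;0)$, a modulation, the linear phase is split along $R(B)\oplus A(\ker B)$ in the same way. The relations $B^+B=\mathcal P_{\ker(B)^\perp}$ and $BB^+=\mathcal P_{R(B)}$ are used throughout.

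3. The resulting quadratic phases must match $\rho(Ax,Cx)$ and $\rho(Bx,Dx)$. Matching them requires only \eqref{symplRel1}--\eqref{symplRel3} and the symmetry of $B^+A$ and $DB^+$ restricted to the relevant subspaces, which is again deduced from \eqref{symplRel1}--\eqref{symplRel2}.

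\textbf{Item (ii), unitarity.} Fix $\xi_2$. By Plancherel on the $r$-dimensional space $\ker(B)^\perp$ in the variable $B^+\xi_1$, the $L^2$ norm of $Tf(\cdot+\xi_2)$ over $R(B)$ equals $\sigma(B)^{1/2}$ times the $L^2(\ker(B)^\perp)$ norm of $f(\cdot+D^\top\xi_2)$. The factor $\sigma(B)^{1/2}$ is the Jacobian of $B^+:R(B)\to\ker(B)^\perp$. Integrating over $\xi_2\in A(\ker B)$, I then change variables $w=D^\top\xi_2$ onto $D^\top A(\ker B)$. Next, the non-orthogonal direct sum \eqref{symplDecom1} has to be related to the orthogonal splitting $\ker(B)^\perp\oplus\ker(B)$ (the change-of-variables formulae of Appendix \ref{appendixA}). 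The product of these Jacobians should be exactly $q_{R(B)^\perp}(A^\top)^{-1}$, and this forces the normalisation $\mu_S$ in \eqref{defmus}.

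\textbf{Main obstacle.} I expect the hardest step to be this last Jacobian bookkeeping: showing that the combined determinant factors of the maps between $\ker B$, $A(\ker B)$, $D^\top A(\ker B)$ and $R(B)^\perp$ collapse to the single quantity $q_{R(B)^\perp}(A^\top)$. I would attack it using Proposition \ref{PropIsomorphisms}(ii) and (iv), together with the identity $A^\top D-C^\top B=I$ restricted to $R(B)^\perp$, which gives $D^\top A=I$ on $\ker(B)$ modulo terms vanishing there. As a fallback, one can instead reduce to the model case $S=\Pi_{\mathcal J}$. Using Proposition \ref{PropIsomorphisms} one would write $S=V_P\cD_E\Pi_{\mathcal J}\cD_F V_Q$, after orthonormal changes of basis adapted to $\ker B$ and $R(B)$; the partial Fourier transform \eqref{PartialFT} then gives \eqref{integralSf} by direct composition, with $\mu_S$ read off from $|\det E\det F|^{1/2}$.
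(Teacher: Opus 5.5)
The paper gives no proof of this theorem. It quotes the formula from ter Morsche--Oonincx \cite{TMO}, in the reformulation of \cite{CGM2025}. So your argument is an independent, self-contained route, and its architecture is sound. By irreducibility of $\rho$ and Schur's lemma, a nonzero bounded intertwiner for $S$ is a scalar multiple of $\widehat S$, and the isometry computation fixes the modulus of that scalar, i.e.\ $\mu_S$. Compared with the citation, this route shows $\mu_S$ as the normalisation forced by unitarity, and it uses only the symplectic relations and the change-of-variables formulae of Appendix~\ref{appendixA}.

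Item (i) is correct as written. In (iii), the paper's convention $\mathcal{D}_E=\mathrm{diag}(E^{-1},E^\top)$ requires the middle factor to be $\mathcal{D}_{B^{-1}}$, not $\mathcal{D}_B$. Then $\mathcal{D}_{B^{-1}}JV_{B^{-1}A}$ has blocks $A,B,-B^{-\top},O$. Left multiplication by $V_{DB^{-1}}$ produces $DB^{-1}A-B^{-\top}=C$ by \eqref{symplRel2}--\eqref{symplRel3}, and $\mathfrak{T}_{B^{-1}}$ supplies the factor $|\det B|^{-1/2}$.

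In (ii) you have placed the main difficulty in the wrong step.

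\textbf{The Jacobian step is short.} Let the columns of $E_1,E_2,E_3$ be orthonormal bases of $R(B)$, $\ker(B)$, $R(B)^\perp$. The parametrisation $(u,v)\mapsto E_1u+AE_2v$ has Jacobian $|\det(E_3^\top AE_2)|$. Proposition~\ref{PropIsomorphisms}(ii) gives $A^\top E_3=E_2M$ with $M$ invertible, so this Jacobian equals $|\det M|=q_{R(B)^\perp}(A^\top)$. Combine this with the factor $\sigma(B)$ from $B^+\colon R(B)\to\ker(B)^\perp$ and the unit Jacobian in \eqref{Edo1}. That unit Jacobian holds because $D^\top Aw=w+B^\top Cw$ with $B^\top Cw\in\ker(B)^\perp$, not because this term vanishes. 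The result is $\|Tf\|_2^2=\mu_S^2\sigma(B)q_{R(B)^\perp}(A^\top)\|f\|_2^2=\|f\|_2^2$.

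\textbf{The real work is the translation intertwining, and your step 1 fails as stated.} The component $\mathcal{P}_{\ker(B)}x$ cannot be absorbed by moving $\xi_2$, because $D^\top\xi_2$ ranges over $D^\top A(\ker(B))\neq\ker(B)$. Use instead the oblique splitting \eqref{symplDecom1}: write $x=x_1+D^\top Aw$ with $x_1\in\ker(B)^\perp$ and $w\in\ker(B)$, then shift $y\mapsto y+x_1$ and $\xi_2\mapsto\xi_2-Aw$.

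On the output side you need $Ax=(Ax_1+BC^\top Aw)+Aw$ to be exactly the $R(B)\oplus A(\ker(B))$ splitting of $Ax$. This rests on two facts:
\begin{itemize}
\item $A(\ker(B)^\perp)=AR(B^\top)\subseteq R(B)$;
\item $AD^\top=I+BC^\top$.
\end{itemize}
Both come from the row relations $AB^\top=BA^\top$, $AD^\top-BC^\top=I$, $CD^\top=DC^\top$ of $SJS^\top=J$, which your list omits.

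With these in hand the phases match term by term:
\begin{itemize}
\item the $\xi_1$-linear phases match because $D^\top(Ax_1+BC^\top Aw)-x_1=B^\top Cx$;
\item the $\xi_2$-linear phases match because $CD^\top=DC^\top$;
\item the stray term $\mathcal{P}_{\ker(B)}C^\top Aw$ pairs to zero against $y\in\ker(B)^\perp$.
\end{itemize}
Only a global constant may be discarded. Any $\xi$-independent leftover $c(x)$ must be a character, because $\rho(x,0)\rho(x',0)=\rho(x+x',0)$ and likewise on the image side. Here $c(x)=e^{i\pi q(x)}$ with $q$ a quadratic form, so it is trivial. The modulation case is exact once you use $\mathcal{P}_{\ker(B)^\perp}\eta=B^+B\eta$.

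\textbf{On your fallback.} The Bruhat factorisation $S=V_P\mathcal{D}_E\Pi_{\mathcal{J}}\mathcal{D}_FV_Q$ is valid. However, $|\det E\det F|$ is not intrinsic, so $\mu_S$ cannot simply be read off from it. Converting the composed integral into the intrinsic integral over $\ker(B)^\perp$ takes roughly as much work as the direct verification.
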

Formula \eqref{integralSf} is used in \cite{CGM2025,cordero2026anisotropic} to prove uncertainty principles for metaplectic operators. 
The main tool for this technique is the following directional representation of metaplectic operators, see \cite[Corollary 3.7]{CGM2025}.

\begin{lemma}\label{lemmaDecomp}
Assume $r=\mathrm{rank}(B)>0$ and let $V:\bR^{r}\to\ker(B)^\perp$ be a linear parametrisation of $\ker(B)^\perp$ with the property that $V^\top V=I_r$. 
\begin{center}
\begin{tikzcd}[row sep=huge, column sep=huge]
    \ker(B)^\perp\subseteq\rd\arrow[r,"V^\top", bend right=30, dashrightarrow] \arrow[r,"V", bend left=30,leftarrow]  \arrow[d, bend left = 30,"B"] & \bR^r  \arrow[dl, "BV",bend left=30]  \\
     R(B)\subseteq\rd \arrow[u,bend left=30,"B^+",dashrightarrow]
\end{tikzcd}
\end{center}
For $\xi=\xi_1+\xi_2\in R(B)\oplus A(\ker(B))$, define 
    \begin{equation}\label{defgxi2}
        g_{\xi_2}(u)=f(Vu+D^\top\xi_2)e^{i\pi(V^\top B^+AVu\cdot u-2V^\top C^\top \xi_2\cdot u)}, \qquad u\in\bR^r.
    \end{equation}
    Then, 
    \begin{equation}\label{Sfghat}
        |\widehat Sf(\xi_1+\xi_2)|=\mu_S|\widehat g_{\xi_{2}}(V^\top B^+\xi_1)|.
    \end{equation}
\end{lemma}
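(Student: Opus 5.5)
The plan is to read the identity \eqref{Sfghat} directly off the ter Morsche--Oonincx representation \eqref{integralSf}, by pulling the integral over $\ker(B)^\perp$ back to $\bR^r$ through the isometric parametrisation $V$. Since $r>0$ we have $B\neq O$, so \eqref{integralSf} applies for every $\xi=\xi_1+\xi_2\in R(B)\oplus A(\ker(B))$. Taking moduli, the outer chirp $e^{i\pi(DB^+\xi_1\cdot\xi_1+DC^\top\xi_2\cdot\xi_2)}$ has modulus one and disappears. What remains is
\[
|\widehat Sf(\xi_1+\xi_2)|=\mu_S\Bigl|\int_{\ker(B)^\perp}f(y+D^\top\xi_2)e^{i\pi B^+Ay\cdot y}e^{-2\pi i(B^+\xi_1-C^\top\xi_2)\cdot y}\,\rmd y\Bigr|.
\]

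Next I substitute $y=Vu$ with $u\in\bR^r$. Because $V^\top V=I_r$, the map $V$ is a linear isometry of $\bR^r$ onto $\ker(B)^\perp$. It therefore carries Lebesgue measure on $\bR^r$ to the $r$-dimensional Lebesgue (Hausdorff) measure on $\ker(B)^\perp$, so $\rmd y=\rmd u$ with no Jacobian factor. I would cite the change-of-variables formulae along subspaces collected in Appendix \ref{appendixA} for this. The phases are then rewritten using the adjoint:
\[
B^+AVu\cdot Vu=V^\top B^+AVu\cdot u,\qquad B^+\xi_1\cdot Vu=V^\top B^+\xi_1\cdot u,\qquad C^\top\xi_2\cdot Vu=V^\top C^\top\xi_2\cdot u .
\]
After this rewriting, the integrand becomes $f(Vu+D^\top\xi_2)$ times the quadratic chirp $e^{i\pi V^\top B^+AVu\cdot u}$, times the linear phase in $V^\top C^\top\xi_2\cdot u$, times $e^{-2\pi i V^\top B^+\xi_1\cdot u}$.

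Grouping the first three factors gives $g_{\xi_2}$ from \eqref{defgxi2}. The remaining exponential is the Fourier kernel evaluated at $\eta=V^\top B^+\xi_1\in\bR^r$, so the integral equals $\widehat{g_{\xi_2}}(V^\top B^+\xi_1)$, possibly up to the unimodular constant $i^{-r/2}$ that the paper's normalisation of $\mathcal{F}$ carries. This unimodular factor is irrelevant once moduli are taken, and \eqref{Sfghat} follows. Since $f\in\cS(\rd)$ and the phases are unimodular, $g_{\xi_2}\in L^1(\bR^r)$ (indeed it is Schwartz in $u$), so all integrals converge absolutely and no density argument is needed.

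The only delicate point is bookkeeping. The sign of the linear chirp $V^\top C^\top\xi_2\cdot u$ in \eqref{defgxi2} must agree with the sign of $C^\top\xi_2\cdot y$ produced by \eqref{integralSf}. The latter gives $+2\pi i\,C^\top\xi_2\cdot y$, while \eqref{defgxi2} as written has $-2\pi i\,V^\top C^\top\xi_2\cdot u$. I would check this against the convention of \cite{CGM2025} and state the lemma with whichever sign makes the two expressions coincide. In either case the argument above is unchanged, and the modulus identity holds with the consistently chosen sign in the definition of $g_{\xi_2}$.
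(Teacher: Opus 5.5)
Your argument is correct, and it is the route the paper relies on. The paper does not prove this lemma itself; it cites \cite[Corollary 3.7]{CGM2025}, which follows from \eqref{integralSf} exactly as you do it: substitute $y=Vu$, move $V$ across the inner products, and take moduli.

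Your sign worry is also justified, and it should be resolved in favour of \eqref{integralSf}: the definition \eqref{defgxi2} should read $+2V^\top C^\top\xi_2\cdot u$. To see this, take $\widehat Sf=\Phi_{Q_1}\mathcal{F}_{\{1\}}(\Phi_{Q_2}f)$ on $\bR^2$ with $Q_1x\cdot x=2x_1x_2$ and $Q_2x\cdot x=x_1^2$. For $\xi=(\sigma,\tau)$ one finds $\mu_S=1$, $D^\top\xi_2=(\tau,\tau)$ and $V^\top C^\top\xi_2=\tau$. A direct computation gives $|\widehat Sf(\sigma,\tau)|=|\widehat{h_\tau}(\sigma)|$ with $h_\tau(z)=f(z,\tau)e^{i\pi z^2}$. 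The printed minus sign would instead give $|\widehat{h_\tau}(\sigma+2\tau)|$, so \eqref{Sfghat} holds only with the plus sign.
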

\begin{remark}\label{remarkgBinvertible}
    If $B\in\mathrm{GL}(n,\bR)$, then $V=I_n$ and $R(B)=\rd$, while $A(\ker(B))=\{0\}$ and $\mu_S=|\det(B)|^{-1/2}$, up to a phase. In this case, the function $g_{\xi_2}$ of Lemma \ref{lemmaDecomp} simplifies as follows:
    \begin{equation}
        g(x)=f(x)e^{i\pi B^{-1}Ax\cdot x}, \qquad x\in\rd,
    \end{equation}
    and, indeed, 
    \begin{equation}\label{compactSffree}
        \widehat Sf(\xi)=|\det(B)|^{-1/2}e^{i\pi DB^{-1}\xi\cdot\xi}\int_{\rd}g(y) e^{-2\pi iB^{-1}\xi\cdot y}\rmd y=|\det(B)|^{-1/2}e^{i\pi DB^{-1}\xi\cdot\xi}\widehat g(B^{-1}\xi).
    \end{equation}
\end{remark}

\section{The entropic uncertainty principle}\label{sec:EUP}
In this section, we prove Theorem \ref{thm:main2} and Proposition \ref{thm:main1}. 
\subsection{Proof of Theorem \ref{thm:main2}}
Let $f\in \mathcal{S}(\rd)$ have $\|f\|_2=1$. 
Let $V:\bR^r\to\ker(B)^\perp$ be an orthogonal parametrisation of $\ker(B)^\perp$. Let $\xi_2\in A(\ker(B))$ and $g_{\xi_2}$ be defined as in \eqref{defgxi2}. Assume $\|g_{\xi_2}\|_{2}\neq0$. We may apply the standard entropic uncertainty principle to $g_{\xi_2}/\|g_{\xi_2}\|_{L^2(\mathbb{R}^r)}$, following the technique in \cite{CGM2025,cordero2026anisotropic}, mentioned in Section \ref{sec:prelim}.  
We have:
    \begin{align}
         \underbrace{-\int_{\bR^r}\ln\left|\frac{g_{\xi_2}(u)}{\|g_{\xi_2}\|_{2}}\right|\cdot \left|\frac{g_{\xi_2}(u)}{\|g_{\xi_2}\|_{2}}\right|^2\mathrm{d}u}_{(I)} \underbrace{-\int_{\bR^r}\ln\left|\frac{\widehat{g_{\xi_2}}(\eta)}{\|g_{\xi_2}\|_{2}}\right|\cdot \left|\frac{\widehat{g_{\xi_2}}(\eta)}{\|g_{\xi_2}\|_{2}}\right|^2\mathrm{d}\eta}_{(II)}\geq \frac{r}{2}(1-\ln(2)).
    \end{align}
    We now study the two integrals separately. Observe that
    \begin{equation}\label{normgxi2}
        \Vert g_{\xi_2}\Vert_{2}^2=\int_{\bR^r}|f(Vu+D^\top\xi_2)|^2\rmd u=\int_{\bR^r}|f(x_1+D^\top\xi_2)|^2\rmd x_1.
    \end{equation}
    Firstly,
    \begin{align}
        (I)&=-\frac{1}{\|g_{\xi_2}\|_{2}^2}\int_{\bR^r}\big( \ln|g_{\xi_2}(u)|-\ln\|g_{\xi_2}\|_{2} \big)|g_{\xi_2}(u)|^2\mathrm{d}u\\
        &=-\frac{1}{\|g_{\xi_2}\|_{2}^2}\int_{\bR^r}\ln|g_{\xi_2}(u)||g_{\xi_2}(u)|^2\mathrm{d}u+\ln\|g_{\xi_2}\|_{2}\\
        &=\ln\|g_{\xi_2}\|_{2}-\frac{1}{\|g_{\xi_2}\|_{2}^2}\int_{\bR^r}\ln|f(Vu+D^\top\xi_2)||f(Vu+D^\top\xi_2)|^2\mathrm{d}u\\
        &=\ln\|g_{\xi_2}\|_{2}-\frac{1}{\|g_{\xi_2}\|_{2}^2}\int_{\ker(B)^\perp}\ln|f(x_1+D^\top\xi_2)||f(x_1+D^\top\xi_2)|^2\mathrm{d}x_1.
    \end{align}
   Again making use of Lemma \ref{lemmaDecomp}, \eqref{CV} and
   \begin{align}
        \|g_{\xi_2}\|_{2}^2&=\|\widehat{g_{\xi_2}}\|_{2}^2=\int_{\bR^r}|\widehat{g_{\xi_2}}(\eta)|^2\mathrm{d}\eta=\mu_S^{-2}\int_{\bR^r}|\widehat Sf(BV\eta+\xi_2)|^2\mathrm{d}\eta.
    \end{align}
   the second integral becomes
    \begin{align}
        (II)&=-\frac{1}{\|g_{\xi_2}\|_{2}^2}\int_{\bR^r}\big(\ln|\widehat{g_{\xi_2}}(\eta)|-\ln\|\widehat{g_{\xi_2}}\|_{2}\big)|\widehat{g_{\xi_2}}(\eta)|^2\mathrm{d}\eta\\
        &=\ln\|g_{\xi_2}\|_{2}-\frac{1}{\|g_{\xi_2}\|_{2}^2}\int_{\bR^r}\ln|\widehat{g_{\xi_2}}(\eta)||\widehat{g_{\xi_2}}(\eta)|^2\mathrm{d}\eta\\
        &=\ln\|g_{\xi_2}\|_{2}-\frac{1}{\mu_S^2\|g_{\xi_2}\|_{2}^2}\int_{\bR^r}\ln|\mu_S^{-1}\widehat Sf(BV\eta+\xi_2)||\widehat Sf(BV\eta+\xi_2)|^2\mathrm{d}\eta\\
        &=\ln\|g_{\xi_2}\|_{2}-\frac{\ln \mu_S ^{-1}}{\mu_S^2\|g_{\xi_2}\|_{2}^2}\int_{\bR^r}|\widehat Sf(BV\eta+\xi_2)|^2\mathrm{d}\eta\\
        &\qquad \qquad\qquad -\frac{1}{\mu_S^2\|g_{\xi_2}\|_{2}^2}\int_{\bR^r}\ln|\widehat Sf(BV\eta+\xi_2)||\widehat Sf(BV\eta+\xi_2)|^2\mathrm{d}\eta\\
        &=\ln\|g_{\xi_2}\|_{2}+\ln \mu_S \\
        &\qquad \qquad\qquad-\frac{1}{\mu_S^2\sigma(B)\|g_{\xi_2}\|_{2}^2}\int_{R(B)}\ln|\widehat Sf(\xi_1+\xi_2)||\widehat Sf(\xi_1+\xi_2)|^2\mathrm{d}\xi_1.
    \end{align}  
    This way,
    \begin{equation}\label{Interm1}\begin{split}
        &\ln\|g_{\xi_2}\|_{2}^2 
        -\frac{1}{\|g_{\xi_2}\|_{2}^2}\int_{\ker(B)^\perp}\ln|f(x_1+D^\top\xi_2)||f(x_1+D^\top\xi_2)|^2\mathrm{d}x_1\\
        &\qquad\qquad\qquad-\frac{q_{R(B)^\perp}(A^\top)}{\|g_{\xi_2}\|_{2}^2}\int_{R(B)}\ln|\widehat Sf(\xi_1+\xi_2)||\widehat Sf(\xi_1+\xi_2)|^2\mathrm{d}\xi_1\geq\frac{r}{2}-\ln(2^{r/2} \mu_S ).
   \end{split} \end{equation}
    Multiplying both sides of \eqref{Interm1} by $\|g_{\xi_2}\|^2_{L^2(\bR^n)}$, and integrating over $A(\ker(B))$, yields
    \begin{align}
    	&\underbrace{\int_{A(\ker(B))}\|g_{\xi_2}\|_{2}^2 \ln\|g_{\xi_2}\|_{2}^2 \mathrm{d}\xi_2}_{(III)}-\int_{A(\ker(B))}\int_{\ker(B)^\perp}\ln|f(x_1+D^\top\xi_2)||f(x_1+D^\top\xi_2)|^2\mathrm{d}x_1\mathrm{d}\xi_2\\
	&-q_{R(B)^\perp}(A^\top)\int_{A(\ker(B))}\int_{R(B)}\ln|\widehat Sf(\xi_1+\xi_2)||\widehat Sf(\xi_1+\xi_2)|^2\mathrm{d}\xi_1\mathrm{d}\xi_2\\
	&\geq \left(\frac{r}{2}-\ln(2^{r/2}\mu_S)\right)\int_{A(\ker(B))}\|g_{\xi_2}\|_{2}^2\mathrm{d}\xi_2.
    \end{align}
    Observe that on the fibers where $g_{\xi_2}=0$ all the terms multiplied by $\Vert g_{\xi_2}\Vert_{2}$ vanish, with the convention that $0\ln0=0$. 
    The lower bound can be simplified using \eqref{Edo1}:
    \begin{equation}\label{RHSentropy}\begin{split}
    	\int_{A(\ker(B))}\|g_{\xi_2}\|_{2}^2\mathrm{d}\xi_2&=
        \int_{A(\ker(B))}\int_{\mathbb{R}^r}|g_{\xi_2}(u)|^2\rmd u\mathrm{d}\xi_2 \\
        &=\int_{A(\ker(B))} \int_{\mathbb{R}^r}|f(Vu+D^\top \xi_2)|^2\rmd u\rmd\xi_2\\
        &=\int_{A(\ker(B))} \int_{\ker(B)^\perp}|f(x_1+D^\top \xi_2)|^2\rmd x_1\rmd\xi_2\\
        &={q_{\ker(B)}(A)}\int_{\ker(B)} \int_{\ker(B)^\perp}|f(x_1+D^\top A y_2)|^2\rmd x_1\rmd y_2\\
       & ={q_{\ker(B)}(A)}\int_{\rd}|f(x)|^2\rmd x.
        \end{split}
    \end{equation}
    Since $\|f\|_2 =1$, we conclude that 
    \begin{equation}
        \int_{A(\ker(B))}\|g_{\xi_2}\|_{2}^2\mathrm{d}\xi_2={q_{\ker(B)}(A)}.
    \end{equation}
    Again making use of \eqref{Edo1}, we obtain the expression for $H[|f|^2]$:
    \begin{align}
    	\label{entropy1}
	&\int_{A(\ker(B))}\int_{\ker(B)^\perp}\ln|f(x_1+D^\top\xi_2)||f(x_1+D^\top\xi_2)|^2\mathrm{d}x_1\mathrm{d}\xi_2={q_{\ker(B)}(A)}\int_{\rd}\ln|f(x)||f(x)|^2\mathrm{d}x.
    \end{align}
    For the entropy of $|\widehat Sf|^2$, we make use of \eqref{Edo2} and Proposition \ref{PropIsomorphisms}:
    \begin{equation}
	\label{entropy2}
    \begin{split}
	q_{R(B)^\perp}&(A^\top)\int_{A(\ker(B))}\int_{R(B)}\ln|\widehat Sf(\xi_1+\xi_2)||\widehat Sf(\xi_1+\xi_2)|^2\mathrm{d}\xi_1\mathrm{d}\xi_2\\
    &=q_{\ker(B)}(A)q_{R(B)^\perp}(A^\top) \int_{\ker(B)}\int_{R(B)}\ln|\widehat Sf(\xi_1+A\xi_2')||\widehat Sf(\xi_1+A\xi_2')|^2\mathrm{d}\xi_1\mathrm{d}\xi_2'\\
    &=q_{\ker(B)}(A)q_{R(B)^\perp}(A^\top) \int_{A^\top (R(B)^\perp)}\int_{R(B)} \ln|\widehat Sf(\xi_1+A\xi_2')||\widehat Sf(\xi_1+A\xi_2')|^2\mathrm{d}\xi_1\mathrm{d}\xi_2' \\
    &=q_{\ker(B)}(A)q_{R(B)^\perp}(A^\top)^2\int_{R(B)^\perp}\int_{R(B)} \ln|\widehat Sf(\xi_1+AA^\top\xi_2'')||\widehat Sf(\xi_1+AA^\top\xi_2'')|^2\mathrm{d}\xi_1\mathrm{d}\xi_2''\\
    &=q_{\ker(B)}(A)\int_{\rd}\ln|\widehat Sf(\xi)||\widehat Sf(\xi)|^2\mathrm{d}\xi.
    \end{split}
    \end{equation}
    It remains to treat $g_{\xi_2}$ in $(III)$. By \eqref{Edo1},
    \begin{equation}
    \label{HSstems}
    \begin{split}
    	(III)&=\int_{A(\ker(B))}\int_{\ker(B)^\perp}|f(y_1+D^\top \xi_2)|^2\ln\left(\int_{\ker(B)^\perp}|f(x_1+D^\top \xi_2)|^2\mathrm{d}x_1\right)\mathrm{d}y_1\mathrm{d}\xi_2\\
	        &=q_{\ker(B)}(A)\int_{\rd} |f(y)|^2\ln\left(\int_{\ker(B)^\perp}|f(x_1+\mathcal{P}_{D^\top A(\ker(B))}^{\ker(B)^\perp}(y))|^2\rmd x_1\right)\rmd y.
            \end{split}
    \end{equation}
    Collecting the identities yields \eqref{entropyUPS}, thereby concluding the proof.

\begin{remark}
\label{remark:case2_immediate}
    	By taking $r=n$ in Theorem \ref{thm:main2}, that is $B\in\mathrm{GL}(n,\bR)$, we retrieve \eqref{entrop2-270526} of Proposition \ref{thm:main1} using Remark \ref{remarkgBinvertible} and that the additional factor
	\begin{align}
		\int_{\rd}|f(y)|^2\ln\left(\int_{\rd}|f(x_1)|^2\mathrm{d}x_1\right)\mathrm{d}y=0
	\end{align}
	since $\|f\|_2=1$.
    \end{remark}
    \begin{remark}
    	For $\xi_2\in A(\ker(B))$, the norm $\|g_{\xi_2}\|^2_{L^2(\mathbb{R}^r)}$ contains the energy of $f$ along the effective direction $D^\top \xi_2+\ker(B)^\perp$. The additional term
		\begin{equation}
  			  H_S[f]=-\int_{\mathbb{R}^n} |f(y)|^2 \ln\!\left(\int_{\ker(B)^\perp}|f(x_1 + \mathcal{P}^{\ker(B)^\perp}_{D^\top A(\ker(B))} y)|^2 \,\mathrm{d}x_1\right)\mathrm{d}y
		\end{equation}
		arises from the computation of the integral
		\begin{equation}
   			 \int_{A(\ker(B))} \|g_{\xi_2}\|^2_{2}\ln \|g_{\xi_2}\|^2_{2} \,\mathrm{d}\xi_2=2 \int_{A(\ker(B))} \|g_{\xi_2}\|^2_{2}\ln \|g_{\xi_2}\|_{2} \,\mathrm{d}\xi_2,
		\end{equation}
		which captures the entropy carried by (the energy along the effective directions of) $f$ along the singular directions. This term appears in \eqref{entropyUPS} with opposite sign, accounting for the entropy preserved along the singular directions, where $\widehat S$ acts trivially.
    \end{remark}

    \subsection{Proof of Proposition \ref{thm:main1}}
The two endpoint cases follow by Remark \ref{remark:case2_immediate} for the $B$ invertible case and the straightforward computation below in the $B=O$ case. In the latter situation, the operator $\widehat S\in\Mp(n,\bR)$ takes the form
\begin{equation}
    \widehat Sf(x)=|\det(A)|^{-1/2}e^{i\pi CA^{-1}x\cdot x}f(A^{-1}x), \qquad f\in L^2(\mathbb{R}^n)
\end{equation}
and the entropy of $\widehat S f$ can therefore be computed as follows:
\begin{align}
    \int_{\mathbb{R}^n}\ln|\widehat Sf(x)||\widehat Sf(x)|^2\mathrm{d}x&=\int_{\mathbb{R}^n}\ln(|\det(A)^{-1/2} f(A^{-1}x)|)|f(A^{-1}x)|^2|\det(A)|^{-1}\mathrm{d}x\\
    &=\int_{\mathbb{R}^n}(\ln|\det(A)^{-1/2}|+\ln|f(y)|)|f(y)|^2\mathrm{d}y\\
    &=-\frac 1 2 \ln|\det(A)|+\int_{\mathbb{R}^n}\ln|f(y)||f(y)|^2\mathrm{d}y,
\end{align}
which is exactly \eqref{entrop1-270526}. 

\medskip
To address the case $1\leq r<n$, we provide a family of functions $(f_\lambda)_\lambda$ on the unit sphere of $L^2(\rd)$ so that $H[|f_\lambda|^2]+H[|\widehat Sf_\lambda|^2]$ is not bounded. Fix $h\in\mathcal{S}(\bR^r)$ with $\|h\|_2=1$ and choose $\psi\in\mathcal{S}(A(\ker(B)))$ such that $\|\psi\|_{L^2(A(\ker(B)))}^2=q_{\ker(B)}(A)$. For $\lambda>0$ set $\psi_\lambda(\xi_2)=\lambda^{\frac{n-r}{2}}\psi(\lambda\xi_2)$ and define $f_\lambda\in\mathcal{S}(\rd)$, using the decomposition \eqref{symplDecom1}, by
\[
f_\lambda(Vu+D^\top\xi_2)
=h(u)\psi_\lambda(\xi_2)
 e^{-i\pi\left(V^\top B^+AVu\cdot u-2V^\top C^\top\xi_2\cdot u\right)}.
\]
Then $g_{\xi_2}(u)=h(u)\psi_\lambda(\xi_2)$. 
By \eqref{RHSentropy}, $f_\lambda$ is in the unit sphere of $L^2(\rd)$, indeed:
\begin{equation}
\label{ewffw}
\begin{split}
\|f_\lambda\|_2^2&=\frac{1}{q_{\ker(B)}(A)}\int_{A(\ker(B))}\left(\int_{\bR^r}|h(u)|^2|\psi_{\lambda}(\xi_2)|^2\rmd u\right)\rmd\xi_2=\frac{1}{q_{\ker(B)}(A)}\|\psi_\lambda\|^2_{L^2(A(\ker(B)))}=1.
\end{split}
\end{equation}
We now compute the two entropy terms for $f_\lambda$. The same changes of variables as in the proof of Theorem \ref{thm:main2} yield
\begin{align}
    H[|f_\lambda|^2]&=H[|h|^2]-\frac{1}{q_{\ker(B)}(A)}\int_{A(\ker(B))}|\psi_\lambda(\xi_2)|^2\ln|\psi_\lambda(\xi_2)|\rmd\xi_2\\
    &=H[|h|^2]-\frac{1}{q_{\ker(B)}(A)}\int_{A(\ker(B))}|\psi(\eta)|^2\ln|\psi(\eta)|\rmd\eta-\frac{n-r}{2}\ln\lambda.
\end{align}
Similarly,
\begin{align}
    H[|\widehat Sf_\lambda|^2]=H[|\widehat h|^2]-\ln\mu_S-\frac{1}{q_{\ker(B)}(A)}\int_{A(\ker(B))}|\psi(\eta)|^2\ln|\psi(\eta)|\rmd\eta-\frac{n-r}{2}\ln\lambda,
\end{align}
whence
\begin{equation}
     H[|f_\lambda|^2]+ H[|\widehat Sf_\lambda|^2]=K_{\psi,h,S}-(n-r)\ln\lambda.
\end{equation}
Since $n-r>0$, the last quantity ranges over all of $\bR$ as $\lambda$ varies. This concludes the proof.


\section{Radial Pitt's inequality for partial Fourier transforms}\label{sec:PittForPartial}
We begin by considering a guiding example of a metaplectic operator, which our more general Theorem \ref{intro.thm:mainmetap} will rely upon.
With this in mind, the goal of the current section is to provide a complete characterisation of Pitt's inequality for Fourier transforms along subspaces.
Let $\mathcal{L}\subset\mathbb{R}^{n}$ be a linear subspace such that $0<\dim(\mathcal{L}^{\perp})=r<n$.
For $f\in\mathcal{S}(\mathcal{L}^{\perp}\times \mathcal{L})$, let
\[
    \mathcal{F}_{x}f(\xi,u):=\int_{\mathcal{L}^{\perp}}e^{-2\pi i x\cdot\xi}f(x,u)\,\mathrm{d}x
\]
denote the Fourier transform in the $\mathcal{L}^{\perp}$ variable.

To state our theorem, we recall the definition of Pitt-admissibility from Definition \ref{intro.defPittAdmissible}, noting that \eqref{Pitt_admiss_i} and \eqref{Pitt_admiss_ii} are not relevant for this example since $0 < r< n$ is assumed. 
\begin{theorem}
\label{prop:PFT-necessary}
There exists $K^{\ast}=K^{\ast}(p,q,\alpha,\beta,r,n)>0$, so that the inequality
\begin{equation}
\label{eq:PFT-Pitt-necessary-assumption}
\left\|
|u+\xi|^{-\beta}\mathcal{F}_{x}f(\xi,u)
\right\|_{L^{q}(\mathcal{L}^{\perp}_{\xi}\times \mathcal{L}_{u})}
\leq K^{\ast}
\left\|
|u+x|^{\alpha}f(x,u)
\right\|_{L^{p}(\mathcal{L}^{\perp}_{x}\times \mathcal{L}_{u})}
\end{equation}
holds for every $f\in\mathcal{S}(\mathcal{L}^{\perp}\times \mathcal{L})$ if and only if the datum $(p,q,\alpha,\beta,r,n)$ is Pitt-admissible, that is if and only if the datum satisfies \eqref{Pitt_admiss_iii} or \eqref{Pitt_admiss_iv} of Definition \ref{intro.defPittAdmissible}.
\end{theorem}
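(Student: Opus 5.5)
The plan is to prove sufficiency and necessity separately. In both directions we use the splitting $\rd=\mathcal{L}^\perp\oplus\mathcal{L}$ and the elementary bounds
\[
|u+x|\geq\max\{|x|,|u|\},\qquad |u+\xi|\geq\max\{|\xi|,|u|\},
\]
valid because $x,\xi\in\mathcal{L}^\perp$ and $u\in\mathcal{L}$ are orthogonal. The idea is to reduce the weighted estimate to the $r$-dimensional Pitt inequality (Theorem \ref{thm:PittpqFT}), applied fibrewise in $x\mapsto\xi$ for each fixed $u$, followed by a one-dimensional-in-$|u|$ Hölder/Hardy-type argument in the $\mathcal{L}$ variable.

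\textbf{Sufficiency, case \eqref{Pitt_admiss_iii}.} Here $p=q$. Bound $|u+\xi|^{-\beta}\le|\xi|^{-\beta}$ and $|u+x|^{\alpha}\geq|x|^{\alpha}$. For each fixed $u$, apply Theorem \ref{thm:PittpqFT}(a) in dimension $r$ with $p=q$ to $f(\cdot,u)$; then take $L^p_u$ norms and use Fubini. The requirements of (a) are
\[
\beta=\alpha+r\Big(\frac1p-\frac1{p'}\Big),\qquad \beta<\frac rp,\qquad \alpha<\frac r{p'}.
\]
The first is exactly $\alpha-\beta=r(1-2/p)$, and together with it the condition $\alpha+\beta<r$ is equivalent to the two strict bounds. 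So this case is immediate.

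\textbf{Sufficiency, case \eqref{Pitt_admiss_iv}.} Here $q<p$, and a fibrewise $L^p\to L^q$ Pitt inequality is unavailable in $\mathcal{L}^\perp$. We therefore split $\mathcal{L}^\perp\times\mathcal{L}$ into the regions $\{|\xi|\le|u|\}$ and $\{|\xi|>|u|\}$ on the Fourier side. On each region we distribute the weight as
\[
|u+\xi|^{-\beta}\lesssim|\xi|^{-\beta_1}|u|^{-\beta_2},\qquad \beta_1+\beta_2=\beta,
\]
and correspondingly $|u+x|^\alpha\geq|x|^{\alpha_1}|u|^{\alpha_2}$ with $\alpha_1+\alpha_2=\alpha$, choosing the splits differently on the two regions. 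The argument on each region then has three steps.
\begin{enumerate}[(1)]
\item Apply Hölder in $\xi$ on the truncated region to pass from $L^q_\xi$ to $L^t_\xi$ for some $t\geq p$. This costs a power $|u|^{r(1/q-1/t)}$ on $\{|\xi|\le|u|\}$, with the analogous tail factor on the complementary region.
\item Apply $r$-dimensional Pitt $L^p_x\to L^t_\xi$ with weights $|x|^{\alpha_1},|\xi|^{-\beta_1}$.
\item Close in $u$ with Hölder, $1/q=1/p+1/s$, using that $|u|^{-\gamma}\mathbf 1_{\{|u|\lessgtr R\}}$ lies in $L^s(\mathcal{L})$ near $0$, respectively near $\infty$, precisely when $\gamma s<n-r$, respectively $\gamma s>n-r$.
\end{enumerate}
The truncations make the $u$-integrals converge at one end, and a dyadic decomposition in $|u|$ can be used to sum the pieces if a single split fails. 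Bookkeeping the admissible choices of $(\alpha_i,\beta_i,t)$ should reproduce exactly the three strict inequalities in \eqref{Pitt_admiss_iv}. The term $(n-r)(1/q-1/p)$ comes from the $L^s(\mathcal{L})$-integrability of the $u$-weight, and the term $r\max\{\dots\}$ from the Pitt constraints on $(\alpha_1,\beta_1)$. I expect this step, namely choosing the splits so that both regions are simultaneously controlled on the whole open region rather than a sub-region, to be the main obstacle.

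\textbf{Necessity.} We test \eqref{eq:PFT-Pitt-necessary-assumption} on tensors $f(x,u)=h(\lambda x)\psi(\mu(u-u_0))$ with $h\in\mathcal S(\mathcal{L}^\perp)$ and $\psi\in\mathcal S(\mathcal{L})$, since then $\mathcal F_xf(\xi,u)=\lambda^{-r}\widehat h(\xi/\lambda)\psi(\mu(u-u_0))$ is explicit.
\begin{itemize}
\item Taking $u_0=0$ and independent dilations $\lambda,\mu\to0,\infty$, the weights become comparable to powers of $\lambda^{-1}$ or $\mu^{-1}$. Comparing powers yields the linear constraints: the $\alpha-\beta$ inequalities, and $\alpha+\beta$ bounded above and below, with equality $\alpha-\beta=r(1-2/p)$ forced when $p=q$.
\item Taking $|u_0|\to\infty$ with $\mu$ fixed makes both weights comparable to $|u_0|$ and recovers the unweighted $r$-dimensional obstruction. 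Superposing $N$ well-separated translates in $u$ (or modulations in $x$) gives $N^{1/q}\lesssim N^{1/p}$, hence $q\le p$; the same idea excludes $p=1$ or $p=\infty$ when $p=q$.
\item Finally, testing with local singularities $|x|^{-a}$ or $|u|^{-b}$ near $0$ gives the strict endpoint inequalities, through the local integrability of the weights.
\end{itemize}
These conditions should match the admissible region; verifying that the extremal families exactly cut out \eqref{Pitt_admiss_iv}, including the $\max$ term, is the delicate part of necessity.
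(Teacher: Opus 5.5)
Your argument for case (iii) is the paper's. Your $p=q$ necessity and your proof of $q\le p$ are fine in outline. The $q<p$ case, however, has a genuine gap in each direction.

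\textbf{Sufficiency in case (iv).} The chain you describe cannot cover the admissible region, so the bookkeeping will not close. Every branch ends with $r$-dimensional Pitt from $L^p_x$ into some $L^t_\xi$ with $t\ge p$ and spatial weight $|x|^{\alpha_1}$. The product bound $|u+x|^{\alpha}\ge|x|^{\alpha_1}|u|^{\alpha_2}$ forces $0\le\alpha_1\le\alpha$. Theorem~\ref{thm:PittpqFT} requires the Fourier-side exponent $\alpha_1+r(1/t-1/p')$ to be nonnegative, i.e.\ $\alpha_1\ge r(1-1/p-1/t)\ge r(1-2/p)$.

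Take $n=2$, $r=1$, $\alpha=0.75$, $\beta=0.65$, $p=10$, $q=2$. The three inequalities of (iv) read $0.3<0.4$, $1.4>0.8$ and $1.4<1.8$, so the datum is admissible. Yet $0.8\le\alpha_1\le0.75$ is impossible, whatever splits, regions or dyadic pieces you use. Moreover, $p=\infty$ is allowed in (iv), and there is no Pitt inequality out of $L^\infty$ at all.

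The missing idea is that, for fixed $u\neq0$, the weight $|u+x|^{-\alpha}=(|u|^2+|x|^2)^{-\alpha/2}$ is bounded near $x=0$ and lies in $L^{p_1}_x$ once $\alpha p_1>r$. The product splitting discards exactly this. The paper proceeds as follows:
\begin{enumerate}[(1)]
\item It applies H\"older in $x$ with this weight to pass from $L^p_x$ to $L^s_x$, $s<\min\{2,p,q'\}$.
\item It applies Hausdorff--Young, then H\"older with $|u+\eta|^{-\beta}\in L^{p_2}_\eta$.
\item Rescaling by $|u|$ produces the factor $|u|^{A(s)+B(s)}$.
\item Two exponents $s_\ast^{\pm}$, on either side of the critical $s_\ast=2r/\big(r+\alpha+\beta-n(1/q-1/p)\big)$, handle $|u|\le1$ and $|u|>1$ before the final H\"older in $u$.
\end{enumerate}
Conditions (B), (C), (D), (F), (G$_\pm$) are then exactly $1<s_\ast<\min\{2,p,q'\}$, $A(s_\ast)<0$, $B(s_\ast)<0$. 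At the datum above, $s_\ast=5/4$ works.

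\textbf{Necessity.} None of your test families produces condition (F), $\alpha+\beta>n(1/q-1/p)$, i.e.\ the entry $1/q-1/p$ inside the maximum in (iv). This condition is strictly stronger than (G$_\pm$) exactly when $q<2<p$, and it is independent of the other conditions. For instance, with $n=6$, $r=4$, $\alpha=\beta=1.2$ and $(1/p,1/q)=(0.2,0.61)$, every inequality of (iv) holds except $2.4>n(1/q-1/p)=2.46$.

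Your families cannot see this. Dilated tensors $h(\lambda x)\psi(\mu(u-u_0))$, their $u$-translates and superpositions, and power singularities are all, in the $x$-variables, essentially one uncertainty-limited bump per scale (spatial spread $\lambda^{-1}$, frequency spread $\lambda$). Such families only yield (B), (C), (D), (G$_\pm$). Your superposition of modulations is used only to get $q\le p$; it does not place packets far out in both $x$ and $\xi$ with the weights tracked.

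Detecting (F) requires a function occupying $\sim T^r$ unit phase-space cells at $|x|\sim T$ and $|\xi|\sim T$ simultaneously. The paper takes
$\sum_{j,\ell}e^{2\pi i\eta_j\cdot x}\phi(x-x_j)\psi(u-u_\ell)$, with $T^r$ $b$-separated points $x_j$ and $\eta_j$ at distance $\sim T$ and $T^{n-r}$ points $|u_\ell|\sim T$. It chooses $b$ large so that the tails $\widehat\phi(\xi-\eta_i)$, $i\ne j$, do not destroy the lower bound near $\eta_j$. This gives $T^{n/q-\beta}\lesssim T^{n/p+\alpha}$.

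Finally, the paper gets strictness of (B), (C), (D), (G$_\pm$) and (F) not from local integrability. It uses a dyadic superposition of normalised, disjointly supported test functions at widely separated scales, which yields $N^{1/q}\lesssim N^{1/p}$ and contradicts $q<p$. Your local-singularity heuristic does not evidently rule out, for example, equality in (B), (D) or (G$_\pm$).
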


When proving Theorem \ref{prop:PFT-necessary}, it will be important to make reference to the expanded three inequalities in \eqref{Pitt_admiss_iv} of Definition \ref{intro.defPittAdmissible}, which we write explicitly as:
\begin{align}
\alpha-\beta
-r\left(1-\frac1p-\frac1q\right)
&>
-(n-r)\left(\frac1q-\frac1p\right),
\tag{B}
\label{eq:PFT-necessary-interface-lower}
\\[4pt]
\alpha-\beta
-r\left(1-\frac1p-\frac1q\right)
&<
(n-r)\left(\frac1q-\frac1p\right),
\tag{D}
\label{eq:PFT-necessary-interface-upper}
\\[4pt]
\alpha+\beta
&>
(n-r)\left(\frac1q-\frac1p\right)
+r\left(1-\frac1p-\frac1q\right),
\tag{$\textnormal{G}_{+}$}
\label{eq:PFT-necessary-lower-sum-plus}
\\[4pt]
\alpha+\beta
&>
(n-r)\left(\frac1q-\frac1p\right)
-r\left(1-\frac1p-\frac1q\right),
\tag{$\textnormal{G}_{-}$}
\label{eq:PFT-necessary-lower-sum-minus}
\\[4pt]
\alpha+\beta
&>
n\left(\frac1q-\frac1p\right),
\tag{F}
\label{eq:PFT-necessary-packet}
\\[4pt]
\alpha+\beta
&<
r+n\left(\frac1q-\frac1p\right).
\tag{C}
\label{eq:PFT-necessary-upper-sum}
\end{align}
To digest what these conditions stipulate, we refer to Figures~\ref{fig:PFT-necessary-1} and \ref{fig:PFT-necessary-2}.

\definecolor{necfill}{RGB}{236,236,236}
\definecolor{diaggreen}{RGB}{25,135,85}

\tikzset{
  axis/.style={line width=.55pt,-{Stealth[length=4pt]}},
  box/.style={line width=.55pt},
  necbd/.style={line width=1.0pt,color=black},
  strictbd/.style={
    line width=1.2pt,
    dash pattern=on 4pt off 3pt,
    color=red!75!black
  },
  tinylabel/.style={font=\scriptsize},
  smalllabel/.style={font=\small}
}

\newcommand{\PFTopenpt}[2]{%
  \draw[line width=.85pt,fill=white] (#1,#2) circle[radius=.0105];
}
\newcommand{\PFTclosedpt}[2]{%
  \fill[diaggreen] (#1,#2) circle[radius=.0115];
}
\newcommand{\PFTaxes}[2]{%
  \draw[box] (0,0) rectangle (1,1);
  \draw[axis] (0,0)--(1.075,0);
  \draw[axis] (0,0)--(0,1.075);
  \node[smalllabel,anchor=west] at (1.085,0) {$\frac1p$};
  \node[smalllabel,anchor=south] at (0,1.085) {$\frac1q$};
  \draw[line width=.42pt] (0,0)--(1,1);
  \draw[line width=.42pt] (.5,-.012)--(.5,.012);
  \draw[line width=.42pt] (-.012,.5)--(.012,.5);
  \node[tinylabel,anchor=north] at (.5,-.030) {$\frac12$};
  \node[tinylabel,anchor=east] at (-.032,.5) {$\frac12$};
  \node[tinylabel,anchor=north east] at (0,0) {$0$};
  \node[tinylabel,anchor=north] at (1,0) {$1$};
  \node[tinylabel,anchor=east] at (0,1) {$1$};
  \node[smalllabel,align=center,anchor=south] at (.5,1.13)
       {#1\\[-1pt]{\scriptsize #2}};
}


\begin{figure}[H]
\centering

\begin{minipage}[t]{0.465\textwidth}
\centering
\begin{tikzpicture}[x=5.65cm,y=5.65cm]
\PFTaxes{}{}

\fill[necfill]
  (0,.45)--(.25,.45)--(.45,.65)--(.45,1)--(.30,1)--(0,.70)--cycle;

\draw[necbd] (0,.45)--(0,.70);
\draw[strictbd] (0,.45)--(.25,.45)--(.45,.65)--(.45,1);
\draw[necbd] (.45,1)--(.30,1);
\draw[strictbd] (.30,1)--(0,.70);

\PFTopenpt{0}{.45}
\PFTopenpt{.25}{.45}
\PFTopenpt{.45}{.65}
\PFTopenpt{.45}{1}
\PFTopenpt{.30}{1}
\PFTopenpt{0}{.70}

\node[tinylabel,anchor=east] at (0,.575) {(A)};
\node[tinylabel,anchor=north] at (.125,.44) {(B)};
\node[tinylabel,anchor=west,rotate=45] at (.355,.49) {(C)};
\node[tinylabel,anchor=west] at (.455,.825) {(D)};
\node[tinylabel,anchor=south] at (.375,1) {(E)};
\node[tinylabel,anchor=east,rotate=45] at (.15,.92) {(F)};
\end{tikzpicture}

\vspace{1mm}
\textnormal{(I)}
\end{minipage}
\hspace{0.015\textwidth}
\begin{minipage}[t]{0.465\textwidth}
\centering
\begin{tikzpicture}[x=5.65cm,y=5.65cm]
\PFTaxes{}{}

\coordinate (P0) at (.20,.60);
\coordinate (P1) at (.50,.50);
\coordinate (P2) at (.40,.80);

\fill[necfill] (P0)--(P1)--(P2)--cycle;
\draw[strictbd] (P0)--(P1)--(P2)--cycle;

\PFTopenpt{.20}{.60}
\filldraw[black] ({.50},{.50}) circle[radius=1.5pt];
\PFTopenpt{.40}{.80}

\node[tinylabel,anchor=north] at (.34,.54) {(B)};
\node[tinylabel,anchor=west] at (.456,.65) {(D)};
\node[tinylabel,anchor=east,rotate=45] at (.305,.76) {(F)};
\end{tikzpicture}

\vspace{1mm}
\textnormal{(II)}
\end{minipage}

\caption{%
Necessary conditions for Pitt's inequality for the partial Fourier
transform. The first figure \textnormal{(I)} represents the case $n=2$, $r=1$, $\alpha=0.75>0.65=\beta$, in which the conditions $(\textnormal{G}_{+})$ and $(\textnormal{G}_{-})$ are redundant. The second figure \textnormal{(II)} represents the case $n=6$, $r=4$, $\alpha=1.2=\beta$, where $(\textnormal{G}_{+})$, $(\textnormal{G}_{-})$ and (C) are redundant. The boundaries (A) and (E) represent the endpoint cases $p=\infty$ and $q=1$, respectively.
}
\label{fig:PFT-necessary-1}
\end{figure}


\begin{figure}[H]
\centering

\begin{minipage}[t]{0.465\textwidth}
\centering
\begin{tikzpicture}[x=5.65cm,y=5.65cm]
\PFTaxes{}{}

\coordinate (P0) at ({1/6},{1/2});
\coordinate (P1) at ({17/80},{29/80});
\coordinate (P2) at ({13/40},{13/40});
\coordinate (P3) at ({29/120},{23/40});

\fill[necfill] (P0)--(P1)--(P2)--(P3)--cycle;
\draw[strictbd] (P0)--(P1)--(P2)--(P3)--cycle;

\PFTopenpt{1/6}{1/2}
\PFTopenpt{17/80}{29/80}
\filldraw[black] ({13/40},{13/40}) circle[radius=1.5pt];
\PFTopenpt{29/120}{23/40}

\node[tinylabel,anchor=east] at (.2,.415) {$(\textnormal{G}_+)$};
\node[tinylabel,anchor=north] at (.252,.352) {(B)};
\node[tinylabel,anchor=west] at (.288,.455) {(D)};
\node[tinylabel,anchor=east,rotate=45] at (.211,.6) {(F)};
\end{tikzpicture}

\vspace{1mm}
\textnormal{(III)}
\end{minipage}
\hspace{0.015\textwidth}
\begin{minipage}[t]{0.465\textwidth}
\centering
\begin{tikzpicture}[x=5.65cm,y=5.65cm]
\PFTaxes{}{}

\coordinate (P0) at ({17/40},{91/120});
\coordinate (P1) at ({27/40},{27/40});
\coordinate (P2) at ({51/80},{63/80});
\coordinate (P3) at ({1/2},{5/6});

\fill[necfill] (P0)--(P1)--(P2)--(P3)--cycle;
\draw[strictbd] (P0)--(P1)--(P2)--(P3)--cycle;

\PFTopenpt{17/40}{91/120}
\filldraw[black] ({27/40},{27/40}) circle[radius=1.5pt];
\PFTopenpt{51/80}{63/80}
\PFTopenpt{1/2}{5/6}

\node[tinylabel,anchor=north] at (.535,.72) {(B)};
\node[tinylabel,anchor=west] at (.64,.753) {(D)};
\node[tinylabel,anchor=south] at (.575,.815) {$(\textnormal{G}_-)$};
\node[tinylabel,anchor=east,rotate=45] at (.47,.86) {(F)};
\end{tikzpicture}

\vspace{1mm}
\textnormal{(IV)}
\end{minipage}

\caption{%
Necessary conditions for Pitt's inequality for the partial Fourier transform in situations where $(\textnormal{G}_{+})$ and $(\textnormal{G}_{-})$ are not redundant, but (C) is. Figure \textnormal{(III)} represents the case $n=6$, $r=4$, $\alpha=1.7>0.3=\beta$, whereas figure \textnormal{(IV)} represents the case  $n=6$, $r=4$, $\alpha=0.3<1.7=\beta$.
}
\label{fig:PFT-necessary-2}
\end{figure}

The rest of this section is devoted to the proof of Theorem~\ref{prop:PFT-necessary}. It is organised in two parts: in subsection \ref{necessity-13082026} we show that the claimed conditions are necessary and in subsection \ref{sufficiency-13082026} that the same conditions are sufficient. 
Following an orthogonal change of variables, we reduce to the case $\mathcal{L}^\perp=\bR^r$ and $\mathcal{L}=\bR^d$. Therefore, we limit ourselves to proving the theorem for the partial Fourier transform $\mathcal F_{\mathcal J}$, for $\mathcal J=\{1,\ldots,r\}$, defined in \eqref{PartialFT}.

\subsection{Necessity}\label{necessity-13082026} 
We begin by proving the necessity of $q\leq p$. Fix $h=e^{-\pi|\cdot|^2}\in\mathcal S(\bR^r)$. Let $\psi\in \mathcal C_c^\infty(\bR^{n-r})$ be supported in the unit ball. Since $x\perp u$ and $\xi\perp u$,
\[
|u+\xi|^{-\beta}\geq(1+|\xi|^2)^{-\beta/2}, \qquad |u+x|^\alpha\leq(1+|x|^2)^{\alpha/2}
\]
on the support of $\psi$. Hence \eqref{eq:PFT-Pitt-necessary-assumption} applied to $f_\lambda(x,u)=h(x)\psi(\lambda u)$, $\lambda\geq1$, implies
\begin{equation}\label{PFT-LpLq-fibre-test}
\|\psi\|_{q}\lesssim_{\alpha,\beta,p,q,r,n}\lambda^{(n-r)(1/q-1/p)}\frac{\|(1+|\cdot|^2)^{\alpha/2}h\|_p}{\|(1+|\cdot|^2)^{-\beta/2}h\|_q}\|\psi\|_{p}.
\end{equation}
Letting $\lambda\to\infty$ forces $q\leq p$.

\medskip
\noindent
{\bf Step 1 ($p=q$).} Assume that $p=q$. We prove the necessity of the Pitt-admissibility conditions in this case, that is conditions \eqref{Pitt_admiss_iii}. 
Fix $h\in\mathcal S(\bR^r)$ and $\psi\in \mathcal C_c^\infty(\bR^{n-r})$ with $\|\psi\|_p\neq0$, and define
\[
\psi_\varepsilon(u)=\varepsilon^{-\frac{n-r}{p}}\psi(u/\varepsilon),
\qquad
f_\varepsilon(x,u)=h(x)\psi_\varepsilon(u).
\]
If $p<\infty$, applying \eqref{eq:PFT-Pitt-necessary-assumption} to $f_\varepsilon$, we have
\begin{align}
    \varepsilon^{-(n-r)}\int_{\rd}|u+\xi|^{-\beta p}|\hat h(\xi)|^p|\psi(u/\varepsilon)|^p\rmd u\rmd\xi\lesssim \varepsilon^{-(n-r)}\int_{\rd}|u+x|^{\alpha p}|h(x)|^p|\psi(u/\varepsilon)|^p\rmd x\rmd u.
\end{align}
Changing variables via $u=\varepsilon v$, we obtain
\begin{equation}
    \int_{\rd} |\varepsilon v+\xi|^{-\beta p}|\hat h(\xi)|^p|\psi(v)|^p\rmd v\rmd\xi\lesssim \int_{\rd}|x+\varepsilon v|^{\alpha p}|h(x)|^p|\psi(v)|^p\rmd x\rmd v.
\end{equation}
Therefore,
\begin{equation}
    \liminf_{\varepsilon\to0}\int_{\rd} |\xi+\varepsilon v|^{-\beta p}|\hat h(\xi)|^p|\psi(v)|^p\rmd v\rmd\xi\lesssim \liminf_{\varepsilon\to0}\int_{\rd}|x+\varepsilon v|^{\alpha p}|h(x)|^p|\psi(v)|^p\rmd x\rmd v.
\end{equation}
By Fatou's lemma, the left-hand side becomes
\begin{align}
    \liminf_{\varepsilon\to0}\int_{\rd} |\xi+\varepsilon v|^{-\beta p}|\hat h(\xi)|^p|\psi(v)|^p\rmd v\rmd\xi&\geq\int_{\rd} |\xi|^{-\beta p}|\hat h(\xi)|^p|\psi(v)|^p\rmd v\rmd\xi\\
    &=\|\psi\|_{L^p(\bR^{n-r})}^p\||\cdot|^{-\beta}\hat h\|_{L^p(\bR^r)}^p.
\end{align}
On the other hand, the dominated convergence theorem on the right-hand side gives
\begin{equation}
\liminf_{\varepsilon\to0}\int_{\rd}|x+\varepsilon v|^{\alpha p}|h(x)|^p|\psi(v)|^p\rmd x\rmd v=\int_{\rd}|x|^{\alpha p}|h(x)|^p|\psi(v)|^p\rmd x\rmd v=\||\cdot|^\alpha h\|^p_{L^p(\bR^{r})}\|\psi\|_{L^p(\bR^{n-r})}^p.
\end{equation}
Consequently,
\[
\big\||\cdot|^{-\beta}\widehat h\big\|_{L^p(\bR^r)} \lesssim \big\||\cdot|^\alpha h\big\|_{L^p(\bR^r)}.
\]
For $p=\infty$, assume again that $\psi\in\mathcal C^\infty_c(\bR^{n-r})$ and furthermore assume $\psi(0)\neq0$. Applying Pitt's inequality to $f_\varepsilon(x,u)=h(x)\psi(u/\varepsilon)$ and evaluating the left-hand side on the fibre $u=0$ yields
\begin{equation}
    \||x+u|^{-\beta}\widehat h(x)\psi(u/\varepsilon)\|_\infty\geq |x|^{-\beta}|\widehat h(x)||\psi(0)|.
\end{equation}
As for the right-hand side, observe that
\begin{equation}
    \sup_{u,x}|x+u|^\alpha |h(x)\psi(u/\varepsilon)|=\sup_{u,x}|x+u\varepsilon|^\alpha |h(x)\psi(u)|\leq\|\psi\|_\infty\sup_{u,x}|x+u\varepsilon|^\alpha |h(x)|.
\end{equation}
By letting $\varepsilon\to0$, we derive
\begin{equation}
\||\cdot|^{\alpha}f_{\varepsilon}\|_\infty\leq\|\psi\|_\infty\||\cdot|^\alpha h\|_\infty.
\end{equation}
Consequently,
\begin{equation}
    \||\cdot|^{-\beta}\hat h\|_\infty\lesssim \||\cdot|^\alpha h\|_\infty.
\end{equation}
Thus, if $p=q$, Theorem \ref{thm:PittpqFT} in dimension $r$ provides the necessity of the conditions in Definition \ref{intro.defPittAdmissible} \eqref{Pitt_admiss_iii}. 

\medskip
\noindent
\textbf{Step 2 ($q<p$).} Assuming now that $q<p$, it remains to consider the necessity of the conditions in Definition \ref{intro.defPittAdmissible} \eqref{Pitt_admiss_iv}.
We will first run a scaling argument in two parameters to obtain \eqref{eq:PFT-necessary-interface-lower}, \eqref{eq:PFT-necessary-upper-sum} ,\eqref{eq:PFT-necessary-interface-upper}, \eqref{eq:PFT-necessary-lower-sum-plus} and \eqref{eq:PFT-necessary-lower-sum-minus} \textit{without} the strict inequality, which will then be modified to rule out the equality case. Condition \eqref{eq:PFT-necessary-packet} is more delicate and will be dealt with subsequently.

\medskip
\noindent
\textit{Step 2.1. Necessity of \eqref{eq:PFT-necessary-interface-lower}, \eqref{eq:PFT-necessary-upper-sum}, \eqref{eq:PFT-necessary-interface-upper}, \eqref{eq:PFT-necessary-lower-sum-plus} and \eqref{eq:PFT-necessary-lower-sum-minus} without strict inequality.} Let $\phi\in \mathcal C^{\infty}_{c}(\mathbb{R}^{r})$ be nonnegative, supported in the annulus $\{x:1< |x|< 2\}$ and strictly positive in a slightly smaller annulus, which guarantees that $\widehat{\phi}(0)>0$ and, by continuity, that there are $0<\rho^{\ast}<1$ and $c>0$ depending only on $\phi$ such that $|\widehat{\phi}(\xi)|>c$ for all $\xi\in B(0,\rho^{\ast})$. Likewise, let $\psi\in \mathcal C^{\infty}_{c}(\mathbb{R}^{n-r})$ be smooth, nonnegative, supported in $\{u: 1<|u|< 2\}$ and strictly positive in a slightly smaller annulus. For $\lambda,\mu>0$ define
    \begin{equation}\label{f-lambda-mu}
        f_{\lambda,\mu}(x,u)=\phi(\lambda x)\psi(\mu u).
    \end{equation}
    By the changes of variables $\mu u=v$, $\lambda x=y$, for $p<\infty$ we can estimate the right-hand side of \eqref{eq:PFT-Pitt-necessary-assumption} for $f_{\lambda,\mu}$ by
    \begin{equation*}
    \eqalign{
       &\displaystyle\left(\int_{\mathbb{R}^{n-r}}\int_{\mathbb{R}^{r}}|u+x|^{\alpha p}|f_{\lambda,\mu}(x,u)|^{p}\mathrm{d}x\mathrm{d}u\right)^{\frac{1}{p}} \cr
       &\displaystyle\qquad\qquad\qquad\qquad\qquad = \left(\int_{\mathbb{R}^{n-r}}|\psi(\mu u)|^{p}\int_{\mathbb{R}^{r}}|u+x|^{\alpha p}|\phi(\lambda x)|^{p}\mathrm{d}x\mathrm{d}u\right)^{\frac{1}{p}} \cr
       &\displaystyle\qquad\qquad\qquad\qquad\qquad =\mu^{-\frac{(n-r)}{p}}\lambda^{-\frac{r}{p}} \left(\int_{\mathbb{R}^{n-r}}|\psi(v)|^{p}\int_{\mathbb{R}^{r}}|\mu^{-1}v+\lambda^{-1}y|^{\alpha p}|\phi(y)|^{p}\mathrm{d}y\mathrm{d}v\right)^{\frac{1}{p}} \cr
       &\displaystyle\qquad\qquad\qquad\qquad\qquad\approx_{\alpha,p}\mu^{-\frac{(n-r)}{p}}\lambda^{-\frac{r}{p}}\max\{\mu^{-1},\lambda^{-1}\}^{\alpha},
       }
    \end{equation*}
    since $|v|,|y|\approx 1$. One can similarly conclude that $\||u+x|^{\alpha}f_{\lambda,\mu}\|_{\infty}\approx \max\{\mu^{-1},\lambda^{-1}\}^{\alpha}$. 
    
    We now seek a lower bound for the left-hand side. This will be obtained by restricting the frequencies $\xi$ to the window $A_{\rho}:=\{\xi\in\mathbb{R}^{r}:\frac{\rho}{2}<|\xi|<\rho\}$, for some $\rho>0$ such that $\lambda^{-1} A_{\rho}\subset B(0,\rho^{\ast})$ with $\rho$ to be later chosen. We remark that the condition imposed on $\rho$ implies that $|\widehat{\phi}(\lambda^{-1}\xi)|>c$. By the changes of variables $\mu u=v$, $\lambda^{-1}\xi=\eta$, we have
    \begin{equation}\label{LHS-Pitt-PSF-10082026}
    \eqalign{
       &\displaystyle\left(\int_{\mathbb{R}^{n-r}}\int_{\mathbb{R}^{r}}|u+\xi|^{-\beta q}|\mathcal{F}_{x}f_{\lambda,\mu}(\xi,u)|^{q}\mathrm{d}\xi\mathrm{d}u\right)^{\frac{1}{q}} \cr
       &\displaystyle \qquad\qquad\qquad\qquad\qquad= \left(\int_{\mathbb{R}^{n-r}}|\psi(\mu u)|^{q}\int_{\mathbb{R}^{r}}|u+\xi|^{-\beta q}\lambda^{-rq}|\widehat{\phi}(\lambda^{-1}\xi)|^{q}\mathrm{d}\xi\mathrm{d}u\right)^{\frac{1}{q}} \cr
       &\displaystyle \qquad\qquad\qquad\qquad\qquad\geq \left(\int_{\mathbb{R}^{n-r}}|\psi(\mu u)|^{q}\int_{A_{\rho}}|u+\xi|^{-\beta q}\lambda^{-rq}|\widehat{\phi}(\lambda^{-1}\xi)|^{q}\mathrm{d}\xi\mathrm{d}u\right)^{\frac{1}{q}} \cr
       &\displaystyle \qquad\qquad\qquad\qquad\qquad=\mu^{-\frac{(n-r)}{q}}\lambda^{-r+\frac{r}{q}} \left(\int_{\mathbb{R}^{n-r}}|\psi(v)|^{q}\int_{\lambda^{-1} A_{\rho}}|\mu^{-1}v+\lambda\eta|^{-\beta q}|\widehat{\phi}(\eta)|^{q}\mathrm{d}\eta\mathrm{d}v\right)^{\frac{1}{q}} \cr
       &\displaystyle \qquad\qquad\qquad\qquad\qquad\gtrsim\mu^{-\frac{(n-r)}{q}}\lambda^{-r+\frac{r}{q}} \left(\int_{\mathbb{R}^{n-r}}|\psi(v)|^{q}\int_{\lambda^{-1} A_{\rho}}(\mu^{-1}+\rho)^{-\beta q}|\widehat{\phi}(\eta)|^{q}\mathrm{d}\eta\mathrm{d}v\right)^{\frac{1}{q}} \cr
       &\displaystyle\qquad\qquad\qquad\qquad\qquad\gtrsim_{\beta,q}\mu^{-\frac{(n-r)}{q}}\lambda^{-r}\rho^{\frac{r}{q}}\max\{\mu^{-1},\rho\}^{-\beta}.
       }
    \end{equation}
    For $p<\infty$, we conclude that for \eqref{eq:PFT-Pitt-necessary-assumption} to hold we must have
    \begin{equation}\label{general-double-scaling-10082026}
        \mu^{-\frac{(n-r)}{q}}\lambda^{-r}\rho^{\frac{r}{q}}\max\{\mu^{-1},\rho\}^{-\beta}\lesssim_{\alpha,\beta,p,q}\mu^{-\frac{(n-r)}{p}}\lambda^{-\frac{r}{p}}\max\{\mu^{-1},\lambda^{-1}\}^{\alpha}.
    \end{equation}
    Let $L>0$ be a suitably large parameter. The following table determines how to choose $\lambda$, $\mu$ and $\rho$ as a function of $L$ in each case studied here. The last column displays what \eqref{general-double-scaling-10082026} becomes after substituting $\lambda$, $\mu$ and $\rho$ by the corresponding choices.
    \[
\begin{array}{c|c|c|c|c}
\text{Condition}&\lambda&\mu&A_{\rho}&\text{Simplified \eqref{general-double-scaling-10082026}}\\ \hline
\eqref{eq:PFT-necessary-interface-lower}&L^{-1}&L& A_{\rho^{\ast}L^{-1}}&L^{\alpha-\beta-r\left(1-\frac{1}{p}-\frac{1}{q}\right)}\gtrsim L^{-(n-r)\left(\frac{1}{q}-\frac{1}{p}\right)}\\
\eqref{eq:PFT-necessary-upper-sum}&L&L& A_{\rho^{\ast}L^{-1}}& L^{\alpha+\beta}\lesssim L^{r+n\left(\frac{1}{q}-\frac{1}{p}\right)}\\
\eqref{eq:PFT-necessary-interface-upper}&L&L& A_{\rho^{\ast}L}&L^{\alpha-\beta-r\left(1-\frac{1}{p}-\frac{1}{q}\right)}\lesssim L^{(n-r)\left(\frac{1}{q}-\frac{1}{p}\right)}\\
\eqref{eq:PFT-necessary-lower-sum-plus}&L^{-1}&L^{-1}& A_{\rho^{\ast}L^{-1}}& L^{\alpha+\beta}\gtrsim L^{(n-r)\left(\frac{1}{q}-\frac{1}{p}\right)+r\left(1-\frac{1}{p}-\frac{1}{q}\right)}\\
\eqref{eq:PFT-necessary-lower-sum-minus}&L&L^{-1}& A_{\rho^{\ast}L}& L^{\alpha+\beta}\gtrsim L^{(n-r)\left(\frac{1}{q}-\frac{1}{p}\right)-r\left(1-\frac{1}{p}-\frac{1}{q}\right)}
\end{array}
\]
Observe that the choices of $\rho$-windows, $A_{\rho}$, are consistent with our construction, that is each pair $\lambda$ and $A_{\rho}$ satisfies $\lambda^{-1} A_{\rho}\subset B(0,\rho^{\ast})$. For example, take $\lambda=L$ and $\rho=\rho^{\ast}L^{-1}$ we have $\xi\in L^{-1} A_{\rho^{\ast}L^{-1}}\Longrightarrow |\xi|<\rho^{\ast}L^{-2}<\rho^{\ast}$. The other two cases of $\lambda=L^{-1}$ with $\rho=\rho^{\ast}L^{-1}$ and $\lambda=L$ with $\rho=\rho^{\ast}L$ follow similarly.

Taking $L\rightarrow\infty$ in each of the expressions of the last column above gives \eqref{eq:PFT-necessary-interface-lower}, \eqref{eq:PFT-necessary-upper-sum} ,\eqref{eq:PFT-necessary-interface-upper}, \eqref{eq:PFT-necessary-lower-sum-plus} and \eqref{eq:PFT-necessary-lower-sum-minus} without strict inequality. The appropriate modifications show that this conclusion still holds for $p=\infty$.

\medskip
\noindent
\textit{Step 2.2. Necessity of \eqref{eq:PFT-necessary-interface-lower}, \eqref{eq:PFT-necessary-upper-sum} ,\eqref{eq:PFT-necessary-interface-upper}, \eqref{eq:PFT-necessary-lower-sum-plus} and \eqref{eq:PFT-necessary-lower-sum-minus} with strict inequality.} 
Assume for a contradiction that Pitt's inequality holds and one of \eqref{eq:PFT-necessary-interface-lower}, \eqref{eq:PFT-necessary-upper-sum} ,\eqref{eq:PFT-necessary-interface-upper}, \eqref{eq:PFT-necessary-lower-sum-plus} and \eqref{eq:PFT-necessary-lower-sum-minus} holds with equality.

We will run a \textit{dyadic superposition} argument. 
For a parameter $L>0$ and some condition $\textnormal{(X)}$ where $\textnormal{X}\in \{\textnormal{B},\textnormal{C},\textnormal{D},\textnormal{G}_{+},\textnormal{G}_{-}\}$, define for $f_{\lambda,\mu}$ as in \eqref{f-lambda-mu},
\begin{equation*}
    g_{L}^{\textnormal{(X)}}=A_{L}^{\textnormal{(X)}}f_{\lambda,\mu},
\end{equation*}
where $\lambda, \mu$ are determined by the entries of the row (X) and the constants $A_{L}^{\textnormal{(X)}}$ are chosen in the following way:
\[
\begin{array}{c|c|c|c|c|c}
\textnormal{(X)}&\eqref{eq:PFT-necessary-interface-lower}&\eqref{eq:PFT-necessary-upper-sum}&\eqref{eq:PFT-necessary-interface-upper}&\eqref{eq:PFT-necessary-lower-sum-plus}&\eqref{eq:PFT-necessary-lower-sum-minus}\\ \hline
A_L^{\textnormal{(X)}}&L^{-\alpha+\frac{(n-2r)}{p}}&L^{\alpha+\frac{n}{p}}&L^{\alpha+\frac{n}{p}}&L^{-\alpha-\frac{n}{p}}&L^{-\alpha-\frac{(n-2r)}{p}}
\end{array}
\]
This choice has a simple reason: it implies, for all $1\leq p \leq\infty$,
\begin{equation*}
    \left(\int_{\mathbb{R}^{n-r}}\int_{\mathbb{R}^{r}}|u+x|^{\alpha p}|g_{L}^{\textnormal{(X)}}(x,u)|^{p}\mathrm{d}x\mathrm{d}u\right)^{\frac{1}{p}}\approx 1,
\end{equation*}
which will be very convenient in the upcoming computations. Likewise, $\||u+x|^{\alpha}g_{L}^{\textnormal{(X)}}\|_{\infty}\approx 1$. As for the left-hand side of \eqref{eq:PFT-Pitt-necessary-assumption}, an immediate consequence of \eqref{LHS-Pitt-PSF-10082026} is

\begin{equation}\label{ineq1-11082026}
    \left(\int_{\mathbb{R}^{n-r}}\int_{\mathbb{R}^{r}}|u+\xi|^{-\beta q}|\mathcal{F}_{x}g_{L}^{\textnormal{(X)}}(\xi,u)|^{q}\mathrm{d}\xi\mathrm{d}u\right)^{\frac{1}{q}}\gtrsim_{\beta,q}A_{L}^{\textnormal{(X)}}\mu^{-\frac{(n-r)}{q}}\lambda^{-r}\rho^{\frac{r}{q}}\max\{\mu^{-1},\rho\}^{-\beta}.
\end{equation}
It can be readily checked that if equality holds for any of the conditions \eqref{eq:PFT-necessary-interface-lower}, \eqref{eq:PFT-necessary-upper-sum} ,\eqref{eq:PFT-necessary-interface-upper}, \eqref{eq:PFT-necessary-lower-sum-plus} or \eqref{eq:PFT-necessary-lower-sum-minus}, the corresponding right-hand side of \eqref{ineq1-11082026} (after substitution by the appropriate choices of $\lambda$, $\mu$, $\rho$ and $A_L^{\textnormal{(X)}}$ in the tables above) is bounded from below  by a positive constant independent of $L$.
Let $L_{j}=4^{j}$ and define
\begin{equation}
    G_{N}^{\textnormal{(X)}}:=\sum_{j=1}^N g_{L_j}^{\textnormal{(X)}}.
\end{equation}
The choice of $L_{j}$ guarantees that the supports of $g_{L_j}^{\textnormal{(X)}}$ are disjoint, therefore, for $p<\infty$
\begin{equation*}
    \left(\int_{\mathbb{R}^{n-r}}\int_{\mathbb{R}^{r}}|u+x|^{\alpha p}|G_{N}^{\textnormal{(X)}}(x,u)|^{p}\mathrm{d}x\mathrm{d}u\right)^{\frac{1}{p}}\lesssim N^{\frac{1}{p}}.
\end{equation*}
Similarly, $\||u+x|^{\alpha}G_{N}^{\textnormal{(X)}}\|_{\infty}\lesssim 1$. On the other hand, $\mathcal{F}_{x}$ preserves the $u$ support since it acts only in $x$, hence pointwise in $(\xi,u)$ at most one $\mathcal{F}_{x}g_{L_{j}}^{\textnormal{(X)}}$ is nonzero. This way, for finite $q$,
\begin{equation*}
    |\mathcal{F}_{x}G_{N}^{\textnormal{(X)}}(\xi,u)|^{q}=\sum_{j=1}^{N}|\mathcal{F}_{x}g_{L_{j}}^{\textnormal{(X)}}(\xi,u)|^{q}.
\end{equation*}
By \eqref{ineq1-11082026} and \eqref{eq:PFT-Pitt-necessary-assumption}, for $p<\infty$,
\begin{equation}\label{ineq2-11082026}
\eqalign{
    \displaystyle N^{\frac{1}{q}}&\displaystyle\lesssim_{\beta,q}\left(\sum_{j=1}^{N}\int_{\mathbb{R}^{n-r}}\int_{\mathbb{R}^{r}}|u+\xi|^{-\beta q}|\mathcal{F}_{x}g_{L_{j}}^{\textnormal{(X)}}(\xi,u)|^{q}\mathrm{d}\xi\mathrm{d}u\right)^{\frac{1}{q}} \cr
    &\displaystyle\leq \left(\int_{\mathbb{R}^{n-r}}\int_{\mathbb{R}^{r}}|u+\xi|^{-\beta q}|\mathcal{F}_{x}G_{N}^{\textnormal{(X)}}(\xi,u)|^{q}\mathrm{d}\xi\mathrm{d}u\right)^{\frac{1}{q}} \cr
    &\displaystyle\lesssim_{\alpha,\beta,p,q,r,n} \left(\int_{\mathbb{R}^{n-r}}\int_{\mathbb{R}^{r}}|u+x|^{\alpha p}|G_{N}^{\textnormal{(X)}}(x,u)|^{p}\mathrm{d}x\mathrm{d}u\right)^{\frac{1}{p}} \cr
    &\displaystyle\approx N^{\frac{1}{p}}.
    }
\end{equation}
This implies $p\leq q$ as $N\rightarrow\infty$, which contradicts our $p>q$ hypothesis. A similar conclusion holds for $p=\infty$. Hence, all of the inequalities in $\{\textnormal{B},\textnormal{C},\textnormal{D},\textnormal{G}_{+},\textnormal{G}_{-}\}$ must be strict.

\medskip
\noindent
\textit{Step 2.3. Necessity of \eqref{eq:PFT-necessary-packet} without strict inequality.} Unlike the previous cases, this obstruction is not produced by a single double-scaling bump function: it comes from simultaneously translating and modulating many packets. We remark that this translation--modulation mechanism is not new: it features in Theorem~1.2 of \cite{DGT}, where the authors obtain necessary conditions for weighted Fourier inequalities.

Let $T\gg1$ be a large integer and let $b>10$ be a large constant to be chosen later.
\begin{itemize}
    \item There exists a constant $c_{r}>0$ depending on $r=\dim{(\mathcal{L}^{\perp})}$ such that for $T$ large enough we can choose exactly $T^r$ points $x_j$ and $T^r$ points $\eta_j$ inside the shell of outer radius $2bc_{r}T$ and inner radius $bc_{r}T$ in $\mathbb{R}^{r}$ centred at the origin, with $|x_{j_1}-x_{j_2}|>b$, $|\eta_{j_1}-\eta_{j_2}|>b$ for all $j_{1},j_{2}\in \{1,\ldots,T^{r}\}$, $j_{1}\neq j_{2}$. 
    \item There exists a constant $\widetilde{c}_{n-r}>0$ depending on $n-r=\dim{(\mathcal{L})}$ such that for $T$ large enough we can choose exactly $T^{n-r}$ points $u_{\ell}$ inside the shell of outer radius $2b\widetilde{c}_{n-r}T$ and inner radius $b\widetilde{c}_{n-r}T$ in $\mathbb{R}^{n-r}$ centred at the origin, with $|u_{\ell_1}-u_{\ell_2}|>b$ for all $\ell_{1},\ell_{2}\in \{1,\ldots,T^{n-r}\}$, $\ell_{1}\neq \ell_{2}$.
\end{itemize}

Defining $\phi\in \mathcal C^{\infty}_{c}(\mathbb{R}^{r})$ and $\psi\in \mathcal C^{\infty}_{c}(\mathbb{R}^{n-r})$ as in step 2.1, we now introduce the following sum of translated and modulated wave-packets
\begin{equation}\label{PT-construction-14082026}
     P_T(x,u)
 :=\sum_{\ell=1}^{T^{n-r}}\sum_{j=1}^{T^{r}}
 e^{2\pi i\eta_j\cdot x}\phi(x-x_j)\psi(u-u_\ell).
\end{equation}
Denote the supports of $x\mapsto \phi(x-x_{j})$ and $u\mapsto\psi(u-u_{\ell})$ by $\Omega_{x_{j}}^{(1)}$ and $\Omega_{u_{\ell}}^{(2)}$, respectively. For $T$ suitably large, we have
\begin{equation*}
\textnormal{supp}\left(P_{T}\right)=\left(\bigcup_{j=1}^{T^{r}}\Omega_{x_{j}}^{(1)}\right)\times\left(\bigcup_{\ell=1}^{T^{n-r}}\Omega_{u_{\ell}}^{(2)}\right),
\end{equation*}
and the product cells $\Omega_{x_{j}}^{(1)}\times\Omega_{u_{\ell}}^{(2)}$ are pairwise disjoint.

\begin{figure}[ht]
\centering
\begin{tikzpicture}[
  scale=1.1,
  boundary/.style={line width=.55pt,color=black!72},
  guide/.style={line width=.48pt,densely dashed,color=black!52},
  packetbd/.style={line width=.48pt,color=black!78},
  pt/.style={circle,fill=black,inner sep=1.15pt},
  lab/.style={font=\scriptsize},
  idx/.style={font=\scriptsize},
  tiny/.style={font=\tiny}
]
\begin{scope}[xshift=-3.15cm]
  \fill[black!7,even odd rule]
    (0,0) circle[radius=2.55]
    (0,0) circle[radius=1.38];
  \draw[boundary] (0,0) circle[radius=1.38];
  \draw[boundary] (0,0) circle[radius=2.55];

  \fill[black!28,even odd rule]
    (0,0) circle[radius=.33]
    (0,0) circle[radius=.16];
  \draw[packetbd] (0,0) circle[radius=.33];
  \draw[packetbd] (0,0) circle[radius=.16];
  \fill (0,0) circle[radius=.026];

  \node[lab,anchor=west] at (.40,.06)
    {$\operatorname{supp}\phi$};

  \coordinate (x1) at (22:1.91);
  \coordinate (x2) at (70:2.18);
  \coordinate (x3) at (119:1.84);
  \coordinate (x4) at (168:2.20);
  \coordinate (x5) at (216:1.82);
  \coordinate (x6) at (267:2.16);
  \coordinate (x7) at (318:1.94);

  \foreach \j in {1,...,7}{
    \fill[black!23,even odd rule]
      (x\j) circle[radius=.29]
      (x\j) circle[radius=.14];
    \draw[packetbd] (x\j) circle[radius=.29];
    \draw[packetbd] (x\j) circle[radius=.14];
    \node[pt] at (x\j) {};
  }

  \node[idx,anchor=center] at (36:2.2)  {$\Omega_{x_{1}}^{(1)}$};
  \node[idx,anchor=center] at (70:1.6)  {$\Omega_{x_{2}}^{(1)}$};
  \node[idx,anchor=center] at (110:2.25) {$\Omega_{x_{3}}^{(1)}$};
  \node[idx,anchor=center] at (155:1.79) {$\Omega_{x_{4}}^{(1)}$};
  \node[idx,anchor=center] at (230:2.2) {$\Omega_{x_{5}}^{(1)}$};
  \node[idx,anchor=center] at (275:1.75) {$\Omega_{x_{6}}^{(1)}$};
  \node[idx,anchor=center] at (335:2.2) {$\Omega_{x_{7}}^{(1)}$};

  \draw[guide,-{Stealth[length=2.7pt]}]
    (0,0)--(145:2.55)
    node[pos=.40,above right,tiny] {$bc_{r}T$}
    node[pos=.96,above right,tiny] {$2bc_{r}T$};

  \node[lab,anchor=south] at (0,2.86)
    {Physical $x$-side};

  \node[tiny,align=center,anchor=north] at (0,-2.78)
    {$\Omega_{x_{j}}^{(1)}=\operatorname{supp}\phi(\,\cdot-x_j)$ centred at $x_j$};
\end{scope}

\begin{scope}[xshift=3.15cm]
  \fill[black!7,even odd rule]
    (0,0) circle[radius=2.55]
    (0,0) circle[radius=1.38];
  \draw[boundary] (0,0) circle[radius=1.38];
  \draw[boundary] (0,0) circle[radius=2.55];

  \fill[black!28] (0,0) circle[radius=.27];
  \draw[packetbd] (0,0) circle[radius=.27];
  \fill (0,0) circle[radius=.026];

  \node[lab,anchor=west] at (.28,.08)
    {$B(0,\rho^{\ast})$};
  \node[tiny,anchor=west] at (.28,-.19)
    {$|\widehat\phi|> c$};

  \coordinate (e1) at (8:2.13);
  \coordinate (e2) at (57:1.76);
  \coordinate (e3) at (104:2.24);
  \coordinate (e4) at (152:1.74);
  \coordinate (e5) at (202:2.28);
  \coordinate (e6) at (250:1.82);
  \coordinate (e7) at (300:2.10);

  \foreach \j in {1,...,7}{
    \fill[black!23] (e\j) circle[radius=.27];
    \draw[packetbd] (e\j) circle[radius=.27];
    \node[pt] at (e\j) {};
  }

  \node[idx,anchor=center] at (8:1.74)   {$\eta_{1}$};
  \node[idx,anchor=center] at (57:2.16)  {$\eta_{2}$};
  \node[idx,anchor=center] at (104:1.84) {$\eta_{3}$};
  \node[idx,anchor=center] at (152:2.14) {$\eta_{4}$};
  \node[idx,anchor=center] at (202:1.88) {$\eta_{5}$};
  \node[idx,anchor=center] at (250:2.22) {$\eta_{6}$};
  \node[idx,anchor=center] at (300:1.71) {$\eta_{7}$};

  \draw[guide,-{Stealth[length=2.7pt]}]
    (0,0)--(138:2.55)
    node[pos=.40,above right,tiny] {$bc_{r}T$}
    node[pos=.91,above right,tiny] {$2bc_{r}T$};

  \node[lab,anchor=south] at (0,2.86)
    {$x$-Fourier side};

  \node[tiny,align=center,anchor=north] at (0,-2.78)
    {$|\widehat{\phi}(\cdot -\eta_{j})|>c>0$ on $B(\eta_j,\rho^{\ast})=\eta_j+B(0,\rho^{\ast})$};
\end{scope}


\end{tikzpicture}

\caption{On the left: each annulus $\Omega_{x_{j}}^{(1)}$ represents the support of $\phi(\cdot-x_{j})$. On the right, we have $|\widehat{\phi}(\xi-\eta_{j})|>c>0$ for all $\xi\in B(\eta_{j},\rho^{\ast})$. The choice of the points $u_{\ell}\in\mathbb{R}^{n-r}$ is similar to the points $x_{j}$ and is not displayed here.}
\label{fig:packet-geometry}
\end{figure}

We then have, for $p<\infty$,
\begin{equation}\label{ineq1-12082026}
    \eqalign{
    &\displaystyle\left(\int_{\mathbb{R}^{n-r}}\int_{\mathbb{R}^{r}}|u+x|^{\alpha p}|P_{T}(x,u)|^{p}\mathrm{d}x\mathrm{d}u\right)^{\frac{1}{p}} \cr
       &\displaystyle\qquad\qquad\qquad\qquad = \left(\int_{\mathbb{R}^{n-r}}\int_{\mathbb{R}^{r}}|u+x|^{\alpha p}\left|\sum_{\ell=1}^{T^{n-r}}\sum_{j=1}^{T^{r}}
 e^{2\pi i\eta_j\cdot x}\phi(x-x_j)\psi(u-u_\ell)\right|^{p}\mathrm{d}x\mathrm{d}u\right)^{\frac{1}{p}} \cr
 &\displaystyle\qquad\qquad\qquad\qquad\lesssim\left(\int_{\cup_{\ell=1}^{T^{n-r}}\Omega_{u_{\ell}}^{(2)}}\int_{\cup_{j=1}^{T^{r}}\Omega_{x_{j}}^{(1)}}|u+x|^{\alpha p}\mathrm{d}x\mathrm{d}u\right)^{\frac{1}{p}} \cr
 &\displaystyle\qquad\qquad\qquad\qquad\lesssim T^{\alpha}\left|\bigcup_{j=1}^{T^{r}}\Omega_{x_{j}}^{(1)}\right|^{\frac{1}{p}}\left|\bigcup_{\ell=1}^{T^{n-r}}\Omega_{u_{\ell}}^{(2)}\right|^{\frac{1}{p}} \cr
 &\displaystyle\qquad\qquad\qquad\qquad\lesssim T^{\alpha+\frac{n}{p}}.
    }
\end{equation}
Similarly, $\||u+x|^{\alpha }P_{T}\|_{\infty}\lesssim T^{\alpha}$. 

\medskip
\noindent
We need to address two technical points (TP) before computing the left-hand side of \eqref{eq:PFT-Pitt-necessary-assumption} for $P_T$. \\
\noindent
\textbf{(TP 1)} First, we need to choose $b$ appropriately: it will be such that
\begin{equation}\label{tail1-12082026}
    \left|\sum_{j=1}^{T^{r}}
 e^{-2\pi ix_{j}\cdot(\xi-\eta_{j})}\widehat{\phi}(\xi-\eta_j)\right|>\frac{c}{2},\quad\forall \xi\in \bigcup_{j=1}^{T^{r}}B(\eta_{j},\rho^{\ast}).
\end{equation}
Let us see why this is possible. Fix $j\in\{1,\ldots,T^{r}\}$ and let
\[
\mathcal{A}_{k}(j)
=
\left\{
i\neq j:
kb\leq |\eta_{i}-\eta_{j}|<(k+1)b
\right\},
\qquad k\geq 1.
\]
Since the points $\eta_i$ are $b$-separated, we have $\#\mathcal{A}_{k}(j)\lesssim_{r} k^{r-1}$. Since 
$\widehat{\phi}\in\mathcal{S}(\mathbb{R}^{r})$, for every $N>0$ it holds that
\[
|\widehat{\phi}(\xi)|
\lesssim_{N,\phi}
(1+|\xi|)^{-N}.
\]
If $\xi\in B(\eta_j,\rho^{\ast})$ and $i\in\mathcal{A}_{k}(j)$, then
\[
|\xi-\eta_i|
\geq
|\eta_i-\eta_j|-|\xi-\eta_j|
\geq
kb-\rho^{\ast}.
\]
Therefore,
\[
|\widehat{\phi}(\xi-\eta_i)|
\lesssim_{N,\phi}
(1+kb-\rho^{\ast})^{-N}.
\]
Assuming $b\geq 2\rho^{\ast}$ gives $kb-\rho^{\ast}\gtrsim kb$, and hence
\begin{equation*}
\sum_{i\neq j}
|\widehat{\phi}(\xi-\eta_i)|
=
\sum_{k\geq 1}
\sum_{i\in\mathcal{A}_{k}(j)}
|\widehat{\phi}(\xi-\eta_i)|
\lesssim_{N,r,\phi}
\sum_{k\geq 1}
k^{r-1}(kb)^{-N}=
b^{-N}
\sum_{k\geq 1}
k^{r-1-N}.
\end{equation*}
Choosing $N>r$, the series converges, and therefore for $b$ sufficiently large we have
\[
\sup_{1\leq j\leq T^{r}}
\sup_{\xi\in B(\eta_j,\rho^{\ast})}
\sum_{i\neq j}
|\widehat{\phi}(\xi-\eta_i)|
\lesssim_{N,r,\phi}
b^{-N}<\frac{c}{2}.
\]
Consequently, if
\[
S(\xi)
=
\sum_{i=1}^{T^{r}}
e^{-2\pi i(\xi-\eta_i)\cdot x_i}
\widehat{\phi}(\xi-\eta_i),
\]
then for every $\xi\in B(\eta_j,\rho^{\ast})$,
\begin{equation*}
|S(\xi)|
\geq
|\widehat{\phi}(\xi-\eta_j)|
-
\sum_{i\neq j}
|\widehat{\phi}(\xi-\eta_i)|
\geq
c-\frac{c}{2}
=
\frac{c}{2},
\end{equation*}
Thus, since $B(\eta_{j_{1}},\rho^{\ast})\cap B(\eta_{j_{2}},\rho^{\ast})=\emptyset$ if $j_{1}\neq j_{2}$, \eqref{tail1-12082026} holds.

\medskip
\noindent
\textbf{(TP 2)} The second technical point involves the bump function $\psi$. Let $E\subset \{u:\psi(u)\neq 0\}$ be compact with $|E|>0$ and $\textnormal{diam}(E)<1$ such that $\psi\geq c_{\psi}>0$ for some constant $c_{\psi}$. This compact set is guaranteed to exist by our hypothesis on $\psi$, and these properties imply that the $u_{\ell}+E$ are pairwise disjoint, $1\leq\ell\leq T^{n-r}$. We make this choice so that
\begin{equation*}
    \left|\sum_{\ell=1}^{T^{n-r}}
 \psi(u-u_\ell)\right|\geq c_{\psi}\quad\textnormal{for }u\in\bigcup_{\ell=1}^{T^{n-r}}(u_{\ell}+E),
\end{equation*}
which will be convenient in what follows.

\medskip
\noindent
We are now ready to obtain a lower bound for the left-hand side of \eqref{eq:PFT-Pitt-necessary-assumption}. Observe that
\begin{equation}\label{ineq2-12082026}
    \eqalign{
     &\displaystyle\left(\int_{\mathbb{R}^{n-r}}\int_{\mathbb{R}^{r}}|u+\xi|^{-\beta q}|\mathcal{F}_{x}P_{T}(\xi,u)|^{q}\mathrm{d}\xi\mathrm{d}u\right)^{\frac{1}{q}} \cr
     &\displaystyle\quad \geq \left(\int_{\cup_{\ell=1}^{T^{n-r}}(E+u_{\ell})}\int_{\bigcup_{j=1}^{T^{r}}B(\eta_{j},\rho^{\ast})}|u+\xi|^{-\beta q}\left|\sum_{\ell=1}^{T^{n-r}}\sum_{j=1}^{T^{r}}
 e^{-2\pi ix_{j}\cdot(\xi-\eta_{j})}\widehat{\phi}(\xi-\eta_j)\psi(u-u_\ell)\right|^{q}\mathrm{d}\xi\mathrm{d}u\right)^{\frac{1}{q}} \cr
 &\displaystyle\quad\gtrsim_{\phi,\psi}\left(\int_{\cup_{\ell=1}^{T^{n-r}}(E+u_{\ell})}\int_{\bigcup_{j=1}^{T^{r}}B(\eta_{j},\rho^{\ast})}|u+\xi|^{-\beta q}\mathrm{d}\xi\mathrm{d}u\right)^{\frac{1}{q}} \cr
 &\displaystyle\quad\gtrsim T^{-\beta}(T^{r}T^{n-r})^{\frac{1}{q}} \cr
 &\displaystyle\quad= T^{-\beta+\frac{n}{q}},
    }
\end{equation}
which for $p<\infty$, combined with \eqref{ineq1-12082026} and \eqref{eq:PFT-Pitt-necessary-assumption}, implies $T^{-\beta+\frac{n}{q}}\lesssim T^{\alpha+\frac{n}{p}}$. By taking $T\rightarrow\infty$, we obtain \eqref{eq:PFT-necessary-packet} without strict inequality. A similar conclusion holds for $p=\infty$.

\medskip
\noindent
\textit{Step 2.4. Necessity of \eqref{eq:PFT-necessary-packet} with strict inequality.} We will appropriately modify the function $P_{T}$ and mimic the previous dyadic superposition argument. Define
\[
 g_T^{\textnormal{(F)}}:=T^{-\alpha-n/p}P_T.
\]
As before, this choice of amplitude combined with the computation in \eqref{ineq1-12082026} guarantees that, for $p<\infty$,
\begin{equation*}
    \left(\int_{\mathbb{R}^{n-r}}\int_{\mathbb{R}^{r}}|u+x|^{\alpha p}|g_T^{\textnormal{(F)}}(x,u)|^{p}\mathrm{d}x\mathrm{d}u\right)^{\frac{1}{p}}\lesssim 1.
\end{equation*}
Similarly, $\||u+x|^{\alpha }g_T^{\textnormal{(F)}}(x,u)\|_{\infty}\lesssim 1$. On the other hand, assuming that $-\alpha-\frac{n}{p}=\beta-\frac{n}{q}$ holds, \eqref{ineq2-12082026} implies
\begin{equation*}
    \left(\int_{\cup_{\ell=1}^{T^{n-r}}(E+u_{\ell})}\int_{\bigcup_{j=1}^{T^{r}}B(\eta_{j},\rho^{\ast})}|u+\xi|^{-\beta q}|\mathcal{F}_{x}g_T^{\textnormal{(F)}}(\xi,u)|^{q}\mathrm{d}\xi\mathrm{d}u\right)^{\frac{1}{q}}\gtrsim 1
\end{equation*}
To apply the dyadic-superposition argument to $g_T^{\textnormal{(F)}}$ we will choose a sequence of widely separated radii $T_k$. More precisely, for $T_{k}>0$, let $U_{k}$ be the $u$-support of $g_{T_{k}}^{\textnormal{(F)}}$. Since the $u$-centres at scale $T_{k}$ lie in an annulus of radius comparable to $T_{k}$, if we pick $T_{k}=2^{Mk}$ then $U_{k_{1}}\cap U_{k_{2}}=\emptyset$, $k_{1}\neq k_{2}$, if $M$ is large enough. Define
\[
 G_N^{\textnormal{(F)}}:=\sum_{k=1}^N g_{T_k}^{\textnormal{(F)}}.
\] 
Because $\mathcal{F}_{x}$ preserves $u$-support, for $u \in U_{k}$ we have
\begin{equation*}
    \mathcal{F}_{x} G_N^{\textnormal{(F)}}(\xi,u)=\mathcal{F}_{x} g_{T_{k}}^{\textnormal{(F)}}(\xi,u).
\end{equation*}
The same choice of $T_{k}$ guarantees that the supports of $g_{T_{k}}^{\textnormal{(F)}}$ are pairwise disjoint. Recall that we previously picked $T_{k}^{r}$ points $\eta_{j}^{(k)}$ and $T_{k}^{n-r}$ points $u_{\ell}^{(k)}$ with certain properties. Let
\begin{equation*}
    \Omega_{k}:=\left(\bigcup_{j=1}^{T_{k}^{r}}B(\eta_{j}^{(k)},\rho^{\ast})\right)\times \left(\bigcup_{\ell=1}^{T_{k}^{n-r}}(E+u_{\ell}^{(k)})\right).
\end{equation*}
To simplify the notation, we will drop the $k$-dependence on the points $\eta_{j}^{(k)}$ and $u_{\ell}^{(k)}$, denoting them by $\eta_{j}$ and $u_{\ell}$ henceforth, but this will not compromise the argument. The computations above together with \eqref{eq:PFT-Pitt-necessary-assumption} yield, for $p<\infty$,
\begin{equation}
    \eqalign{
   \displaystyle N^{\frac{1}{q}}&\displaystyle\lesssim\left(\sum_{k=1}^{N}\int_{\cup_{\ell=1}^{T_{k}^{n-r}}(E+u_{\ell})}\int_{\cup_{j=1}^{T_{k}^{r}}B(\eta_{j},\rho^{\ast})}|u+\xi|^{-\beta q}|\mathcal{F}_{x}g_{T_k}^{\textnormal{(F)}}(\xi,u)|^{q}\mathrm{d}\xi\mathrm{d}u\right)^{\frac{1}{q}} \cr
   &=\displaystyle\left(\int_{\mathbb{R}^{n-r}}\int_{\mathbb{R}^{r}}|u+\xi|^{-\beta q}\left(\sum_{k=1}^{N}\chi_{\Omega_{k}}(\xi,u)|\mathcal{F}_{x}g_{T_k}^{\textnormal{(F)}}(\xi,u)|^{q}\right)\mathrm{d}\xi\mathrm{d}u\right)^{\frac{1}{q}} \cr
   &\leq\displaystyle\left(\int_{\mathbb{R}^{n-r}}\int_{\mathbb{R}^{r}}|u+\xi|^{-\beta q}|\mathcal{F}_{x}G_{N}^{\textnormal{(F)}}(\xi,u)|^{q}\mathrm{d}\xi\mathrm{d}u\right)^{\frac{1}{q}} \cr
   &\lesssim \displaystyle\left(\int_{\mathbb{R}^{n-r}}\int_{\mathbb{R}^{r}}|u+x|^{\alpha p}|G_N^{\textnormal{(F)}}(x,u)|^{p}\mathrm{d}x\mathrm{d}u\right)^{\frac{1}{p}} \cr
   &\leq \displaystyle\left(\sum_{k=1}^{N}\int_{\mathbb{R}^{n-r}}\int_{\mathbb{R}^{r}}|u+x|^{\alpha p}|g_{T_{k}}^{\textnormal{(F)}}(x,u)|^{p}\mathrm{d}x\mathrm{d}u\right)^{\frac{1}{p}} \cr
   &\lesssim N^{\frac{1}{p}}.
    }
\end{equation}
This implies $p\leq q$ as $N\rightarrow\infty$, which again contradicts our $p>q$ hypothesis. A similar conclusion holds for $p=\infty$.

\subsection{Sufficiency}\label{sufficiency-13082026} We will treat the diagonal and off-diagonal cases separately.\\
\noindent
{\bf Step 1 ($p=q$).} For a fixed $u\in\mathbb{R}^{n-r}$, Theorem \ref{thm:PittpqFT} gives
\begin{equation*}
    \left(
    \int_{\bR^r}
    |\xi|^{-\beta p}
    |\mathcal{F}_{x}f(\xi,u)|^{p}
    \,\mathrm{d}\xi
    \right)^{\frac1p}
    \lesssim_{\alpha,\beta,p,r}
    \left(
    \int_{\bR^r}
    |x|^{\alpha p}
    |f(x,u)|^{p}
    \,\mathrm{d}x
    \right)^{\frac1p},
\end{equation*}
provided
\begin{equation*}
    1<p<\infty,
    \qquad
    \alpha-\beta
    =
    r\left(1-\frac{2}{p}\right),
    \qquad
    \alpha+\beta<r.
\end{equation*}
On the other hand, since $x\perp u$, $\xi\perp u$, $\alpha\geq0$ and $\beta\geq0$, we have
\begin{equation*}
    |u+\xi|^{-\beta}\leq |\xi|^{-\beta},
    \qquad
    |x|^{\alpha}\leq |u+x|^{\alpha}.
\end{equation*}
Therefore,
\begin{equation}\label{diag-Pitt-14082026}
    \left(
    \int_{\bR^r}
    |u+\xi|^{-\beta p}
    |\mathcal{F}_{x}f(\xi,u)|^{p}
    \,\mathrm{d}\xi
    \right)^{\frac1p}
    \lesssim_{\alpha,\beta,p,r}
    \left(
    \int_{\bR^r}
    |u+x|^{\alpha p}
    |f(x,u)|^{p}
    \,\mathrm{d}x
    \right)^{\frac1p}.
\end{equation}
Raising both sides of \eqref{diag-Pitt-14082026} to the power $p$ and integrating in $u$ over $\mathbb{R}^{n-r}$ concludes the diagonal case.

\medskip
\noindent
{\bf Step 2 ($q<p$).}
We start with a sketch of the argument to help us build the intuition to close it. Let $1\leq q < p\leq\infty$ be a point inside the range determined by \eqref{eq:PFT-necessary-interface-lower}, \eqref{eq:PFT-necessary-upper-sum}, \eqref{eq:PFT-necessary-interface-upper}, \eqref{eq:PFT-necessary-packet}, \eqref{eq:PFT-necessary-lower-sum-plus} and \eqref{eq:PFT-necessary-lower-sum-minus}. Start by fixing $u\in\mathbb{R}^{n-r}\backslash\{0\}$ and define
\begin{equation*}
    F_{u}(x):= |u+x|^{\alpha }f(x,u), \qquad H(u):=\|F_{u}\|_{L^{p}_{x}}.
\end{equation*}
This way, $f(x,u)=|u+x|^{-\alpha}F_{u}(x)$. Let $s,p_{1},p_{2}\in [1,\infty]$ be parameters to be chosen later. By formally applying Hausdorff-Young and H\"older twice and by performing the changes of variables $\eta=|u|\xi$ and $x=|u|y$,
\begin{equation}\label{Pitt-PSF-13082026}
    \eqalign{
    \displaystyle\||u+\eta|^{-\beta }\mathcal{F}_{x}f(\eta,u)\|_{L^{q}_{\eta}}&\leq\displaystyle \||u+\eta|^{-\beta }\|_{L^{p_{2}}_{\eta}}\|\mathcal{F}_{x}f(\eta,u)\|_{L^{s'}_{\eta}} \cr
    &\leq\displaystyle\||u+\eta|^{-\beta }\|_{L^{p_{2}}_{\eta}}\|f(x,u)\|_{L^{s}_{x}} \cr
    &\leq\displaystyle\||u+\eta|^{-\beta }\|_{L^{p_{2}}_{\eta}}\||u+x|^{-\alpha }\|_{L^{p_{1}}_{x}}\|F_{u}\|_{L^{p}_{x}}. \cr
    &=\displaystyle |u|^{\frac{r}{p_{2}}-\beta}\|(1+|\xi|^{2})^{-\frac{\beta}{2} }\|_{L^{p_{2}}_{\xi}}|u|^{\frac{r}{p_{1}}-\alpha}\|(1+|y|^{2})^{-\frac{\alpha}{2} }\|_{L^{p_{1}}_{y}}\|F_{u}\|_{L^{p}_{x}} \cr
    &=\displaystyle |u|^{A(s)+B(s)}\|(1+|\xi|^{2})^{-\frac{\beta}{2} }\|_{L^{p_{2}}_{\xi}}\|(1+|y|^{2})^{-\frac{\alpha}{2} }\|_{L^{p_{1}}_{y}}\|F_{u}\|_{L^{p}_{x}},
    }
\end{equation}
where
\begin{equation*}
    A(s):=r\left(\frac{1}{s}-\frac{1}{p}\right)-\alpha,\qquad B(s):=r\left(\frac{1}{q}-\frac{1}{s'}\right)-\beta.
\end{equation*}
Several conditions must hold true for the computations above to be valid and for the final right-hand side to be finite:
\begin{itemize}
    \item The parameter $s$ must satisfy
    \begin{equation}\label{ineq1-13082026}
        1\leq s\leq\min\{2,p,q'\}.
    \end{equation}
    These constraints come from both Hausdorff-Young and H\"older.
    \item The parameters $p_{1}$ and $p_{2}$ must satisfy
    \begin{equation}\label{ineq2-13082026}
        \frac{1}{s}=\frac{1}{p}+\frac{1}{p_{1}},\qquad \frac{1}{q}=\frac{1}{s'}+\frac{1}{p_{2}}.
    \end{equation}
    These conditions are imposed by H\"older.
    \item We must have
    \begin{equation*}
        \|(1+|\xi|^{2})^{-\frac{\beta}{2} }\|_{L^{p_{2}}_{\xi}}<\infty\quad\textnormal{and}\quad \|(1+|y|^{2})^{-\frac{\alpha}{2} }\|_{L^{p_{1}}_{y}}<\infty.
    \end{equation*}
    For the values of $s$ that will be used below, we will have $s<p$ and $s<q'$, hence $p_{1},p_{2}<\infty$. In this case finiteness of the expressions above is equivalent to 
    \begin{equation}\label{ineq3-13082026}
        B(s)<0\quad\textnormal{and}\quad A(s)<0,
    \end{equation}
    respectively.
\end{itemize}
Proceeding with the formal argument, the next step is to integrate both sides of \eqref{Pitt-PSF-13082026} in $L^{q}(\mathbb{R}^{n-r})$. Again by H\"older,
\begin{equation}\label{est1-13082026}
\eqalign{
\displaystyle\||u+\eta|^{-\beta }\mathcal{F}_{x}f(\eta,u)\|_{L^{q}_{\eta,u}}&\displaystyle\lesssim \||u|^{A(s)+B(s)}\|F_{u}\|_{L^{p}_{x}}\|_{L^{q}_{u}} \cr
&\displaystyle\leq \||u|^{A(s)+B(s)}\|_{L^{p_{3}}_{u}}\||u+x|^{\alpha }f(x,u)\|_{L^{p}_{x,u}}, \cr
}
\end{equation}
where
\begin{equation}\label{p3-def-13082026}
    \frac{1}{p_{3}}=\frac{1}{q}-\frac{1}{p}.
\end{equation}
Observe that the radial power $|u|^{(A(s)+B(s))p_{3}}$ 
\begin{itemize}
    \item Integrates near zero if $(A(s)+B(s))p_{3}>-(n-r)$,
    \item Integrates near $\infty$ if $(A(s)+B(s))p_{3}<-(n-r)$.
\end{itemize}
It is then more convenient to go a step back and split the left-hand side of \eqref{est1-13082026} in the regimes $|u|\leq 1$ and $|u|>1$, and to perform the argument we are designing for exponents arbitrarily close, from above and below, to the \textit{critical power} $s=s^{\ast}$ such that
\begin{equation*}
    (A(s_{\ast})+B(s_{\ast}))p_{3}=-(n-r),
\end{equation*}
provided $s_{\ast}$ satisfies all the conditions we need. Explicitly, 
\begin{equation*}
    (A(s_{\ast})+B(s_{\ast}))p_{3}=-(n-r)\quad\Longleftrightarrow\quad s_{\ast}=\frac{2r}{r+(\alpha+\beta)-n\left(\frac{1}{q}-\frac{1}{p}\right)}.
\end{equation*}
Observe that $\alpha+\beta>n\left(\frac{1}{q}-\frac{1}{p}\right)$ by condition \eqref{eq:PFT-necessary-packet}, therefore $s_{\ast}>0$. Let us assume for a moment that $s_{\ast}$ satisfies $1<s_{\ast}<\min\{2,p,q'\}$, $A(s_{\ast})<0$ and $B(s_{\ast})<0$. By continuity, there is $\varepsilon>0$ such that $s_{\ast}^{-}$ and $s_{\ast}^{+}$ given by
\begin{equation*}
    \frac{1}{s_{\ast}^{-}}:=\frac{1}{s_{\ast}}+\varepsilon\quad\textnormal{and}\quad \frac{1}{s_{\ast}^{+}}:=\frac{1}{s_{\ast}}-\varepsilon
\end{equation*}
also satisfy $1<s_{\ast}^{-},s_{\ast}^{+}<\min\{2,p,q'\}$, $A(s_{\ast}^{-}),A(s_{\ast}^{+}),B(s_{\ast}^{-}),B(s_{\ast}^{+})<0$. Applying \eqref{Pitt-PSF-13082026} for $s_{\ast}^{-}$ and $s_{\ast}^{+}$ gives 
\begin{equation}
\eqalign{
\displaystyle\||u+\eta|^{-\beta }\mathcal{F}_{x}f(\eta,u)\|_{L^{q}_{\eta}}
&\displaystyle\lesssim |u|^{A(s_{\ast}^{-})+B(s_{\ast}^{-})}\||u+x|^{\alpha }f(x,u)\|_{L^{p}_{x}} \cr
&\displaystyle= |u|^{A(s_{\ast})+B(s_{\ast})+2r\varepsilon}\||u+x|^{\alpha }f(x,u)\|_{L^{p}_{x}}
}
\end{equation}
and
\begin{equation}
\eqalign{
\displaystyle\||u+\eta|^{-\beta }\mathcal{F}_{x}f(\eta,u)\|_{L^{q}_{\eta}}
&\displaystyle\lesssim |u|^{A(s_{\ast}^{+})+B(s_{\ast}^{+})}\||u+x|^{\alpha }f(x,u)\|_{L^{p}_{x}} \cr
&\displaystyle= |u|^{A(s_{\ast})+B(s_{\ast})-2r\varepsilon}\||u+x|^{\alpha }f(x,u)\|_{L^{p}_{x}}.
}
\end{equation}
This way, for a fixed $u\in\mathbb{R}^{n-r}\backslash\{0\}$,
\begin{equation*}
    \displaystyle\||u+\eta|^{-\beta }\mathcal{F}_{x}f(\eta,u)\|_{L^{q}_{\eta}}\lesssim\begin{cases}
|u|^{-\frac{(n-r)}{p_{3}}+2r\varepsilon}\||u+x|^{\alpha }f(x,u)\|_{L^{p}_{x}}, & 0<|u|\leq 1,\\[2mm]
|u|^{-\frac{(n-r)}{p_{3}}-2r\varepsilon}\||u+x|^{\alpha }f(x,u)\|_{L^{p}_{x}}, & |u|> 1.
\end{cases}
\end{equation*}
And this yields \eqref{eq:PFT-Pitt-necessary-assumption} after raising both sides to the power $q$, integrating then over $\mathbb{R}^{n-r}$ and repeating the steps of \eqref{est1-13082026}. There is one last step to conclude the proof.

\begin{claim}\label{Claim1-13082026} The critical exponent $s_{\ast}$ satisfies
\begin{equation*}
    1<s_{\ast}<\min\{2,p,q'\},\quad A(s_{\ast})<0\quad\textnormal{and}\quad B(s_{\ast})<0.
\end{equation*}
\end{claim}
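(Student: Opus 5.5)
The plan is to verify each of the six required inequalities for $s_{\ast}$ separately, by direct algebra, and to show that each one is equivalent to exactly one of the conditions \eqref{eq:PFT-necessary-interface-lower}--\eqref{eq:PFT-necessary-upper-sum}, all of which hold strictly by assumption. To keep the computation readable, write $\sigma:=\alpha+\beta$ and $\delta:=\frac1q-\frac1p>0$. Since $q<p$, condition \eqref{eq:PFT-necessary-packet} gives $\sigma>n\delta$. Hence $s_{\ast}>0$, and
\begin{equation*}
\frac{1}{s_{\ast}}=\frac{r+\sigma-n\delta}{2r},\qquad \frac{r}{s_{\ast}}=\frac{r+\sigma-n\delta}{2}.
\end{equation*}
Because every quantity involved is expressed through $1/s_{\ast}$, each constraint becomes a linear inequality in $\alpha,\beta$. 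The only care needed is to keep track of the direction of each inequality when passing from $s_{\ast}$ to $1/s_{\ast}>0$.

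First I treat the size constraints on $s_{\ast}$.
\begin{itemize}
\item[$\cdot$] $s_{\ast}>1$ means $1/s_{\ast}<1$, i.e.\ $\sigma<r+n\delta$. This is \eqref{eq:PFT-necessary-upper-sum}.
\item[$\cdot$] $s_{\ast}<2$ means $1/s_{\ast}>\frac12$, i.e.\ $\sigma>n\delta$. This is \eqref{eq:PFT-necessary-packet}.
\item[$\cdot$] $s_{\ast}<p$ means $r+\sigma-n\delta>\frac{2r}{p}$. Rewriting $n\delta=(n-r)\delta+r\delta$ turns this into
\begin{equation*}
\sigma>(n-r)\delta-r\left(1-\tfrac1p-\tfrac1q\right),
\end{equation*}
which is \eqref{eq:PFT-necessary-lower-sum-minus}.
\item[$\cdot$] $s_{\ast}<q'$ means $1/s_{\ast}>1-\frac1q$, i.e.\ $\sigma>n\delta+r(1-\frac2q)$. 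The same regrouping gives $(n-r)\delta+r(1-\frac1p-\frac1q)$ on the right, which is \eqref{eq:PFT-necessary-lower-sum-plus}.
\end{itemize}

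Next I treat the sign conditions, using the value of $r/s_{\ast}$ above.
\begin{itemize}
\item[$\cdot$] $A(s_{\ast})<0$ reads $\frac{r+\sigma-n\delta}{2}-\frac rp<\alpha$. Substituting $\sigma=\alpha+\beta$, this simplifies to $\alpha-\beta>r(1-\frac2p)-n\delta$. Expanding $r(1-\frac1p-\frac1q)-(n-r)\delta$ shows it equals exactly this right-hand side, so the condition is \eqref{eq:PFT-necessary-interface-lower}.
\item[$\cdot$] $B(s_{\ast})<0$ reads $\frac rq-r+\frac{r+\sigma-n\delta}{2}<\beta$. This simplifies to $\alpha-\beta<r(1-\frac2q)+n\delta$. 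The right-hand side equals $r(1-\frac1p-\frac1q)+(n-r)\delta$, so the condition is \eqref{eq:PFT-necessary-interface-upper}.
\end{itemize}

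Since Pitt-admissibility in case \eqref{Pitt_admiss_iv} is exactly the conjunction of these six strict conditions, the claim follows. There is no real obstacle, only bookkeeping. The step most prone to error is the regrouping identity $n\delta=(n-r)\delta+r\delta$, which is what makes the four thresholds $\frac{2r}{p}$, $1-\frac1q$, $r(1-\frac2p)$, $r(1-\frac2q)$ match the symmetric forms in which \eqref{eq:PFT-necessary-interface-lower}--\eqref{eq:PFT-necessary-lower-sum-minus} are written. I would verify each of these four identities by fully expanding both sides.
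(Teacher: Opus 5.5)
Your proposal is correct and follows essentially the same route as the paper: each of the six constraints on $s_{\ast}$ is shown to be equivalent to exactly one of \eqref{eq:PFT-necessary-packet}, \eqref{eq:PFT-necessary-lower-sum-minus}, \eqref{eq:PFT-necessary-lower-sum-plus}, \eqref{eq:PFT-necessary-upper-sum}, \eqref{eq:PFT-necessary-interface-lower} and \eqref{eq:PFT-necessary-interface-upper}. Your algebra checks out. Phrasing everything through $1/s_{\ast}$ also covers the endpoint cases $p=\infty$ and $q=1$, which the paper handles separately, without extra effort.
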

\begin{proof}[Proof of Claim \ref{Claim1-13082026}] We will check each condition separately.

Observe that
\begin{equation}\label{cond1-14082026}  
    \displaystyle s_{\ast}<2 \quad\displaystyle\Longleftrightarrow\quad \frac{2r}{r+(\alpha+\beta)-n\left(\frac{1}{q}-\frac{1}{p}\right)}<2 \quad\Longleftrightarrow\quad \alpha+\beta > n\left(\frac{1}{q}-\frac{1}{p}\right),
\end{equation}
which is condition \eqref{eq:PFT-necessary-packet}. If $p=\infty$, then $s_{\ast}<p$ follows trivially. If $p<\infty$,
   \begin{equation}\label{cond2-14082026}  
    \displaystyle s_{\ast}<p \quad\displaystyle\Longleftrightarrow\quad \frac{2r}{r+(\alpha+\beta)-n\left(\frac{1}{q}-\frac{1}{p}\right)}<p \quad\Longleftrightarrow\quad \alpha+\beta > (n-r)\left(\frac{1}{q}-\frac{1}{p}\right)-r\left(1-\frac{1}{q}-\frac{1}{p}\right),
\end{equation}
which is condition \eqref{eq:PFT-necessary-lower-sum-minus}. If $q=1$, then $s_{\ast}<q'$ follows trivially. If $q>1$,
   \begin{equation}\label{cond3-14082026}  
    \displaystyle s_{\ast}<q' \quad\displaystyle\Longleftrightarrow\quad \frac{2r}{r+(\alpha+\beta)-n\left(\frac{1}{q}-\frac{1}{p}\right)}<q' \quad\Longleftrightarrow\quad \alpha+\beta > (n-r)\left(\frac{1}{q}-\frac{1}{p}\right)+r\left(1-\frac{1}{p}-\frac{1}{q}\right),
\end{equation}
which is condition \eqref{eq:PFT-necessary-lower-sum-plus}.
   \begin{equation}\label{cond4-14082026}  
    \displaystyle 1<s_{\ast} \quad\displaystyle\Longleftrightarrow\quad 1<\frac{2r}{r+(\alpha+\beta)-n\left(\frac{1}{q}-\frac{1}{p}\right)} \quad\Longleftrightarrow\quad \alpha+\beta < r+n\left(\frac{1}{q}-\frac{1}{p}\right),
\end{equation}
which is condition \eqref{eq:PFT-necessary-upper-sum}.
   \begin{equation}\label{cond5-14082026}  
    \displaystyle A(s_{\ast})<0 \quad\displaystyle\Longleftrightarrow\quad r\left(\frac{1}{s_{\ast}}-\frac{1}{p}\right)-\alpha<0 \quad\Longleftrightarrow\quad \alpha-\beta
-r\left(1-\frac1p-\frac1q\right)>-(n-r)\left(\frac1q-\frac1p\right),
\end{equation}
which is condition \eqref{eq:PFT-necessary-interface-lower}. Finally,
   \begin{equation}\label{cond6-14082026}  
    \displaystyle B(s_{\ast})<0 \quad\displaystyle\Longleftrightarrow\quad r\left(\frac{1}{q}-\frac{1}{(s_{\ast})'}\right)-\beta<0 \quad\Longleftrightarrow\quad \alpha-\beta
-r\left(1-\frac1p-\frac1q\right)<(n-r)\left(\frac1q-\frac1p\right),
\end{equation}
which is condition \eqref{eq:PFT-necessary-interface-upper}.
\end{proof}

The proof of sufficiency is concluded with Claim \ref{Claim1-13082026}.

\section{Pitt's inequalities for metaplectic operators}\label{sec:Pitt}
In this section, we investigate Pitt's inequality for metaplectic operators under two complementary perspectives. First, we use Theorem \ref{prop:PFT-necessary} to derive necessary and sufficient conditions for $(p,q,\alpha,\beta,r,n)$ under which \eqref{Pittgeneral} holds for every $f\in\cS(\rd)$, that is we prove Theorem \ref{intro.thm:mainmetap}. 
We will refer to \eqref{Pittgeneral} as to the {\em radial Pitt's inequality} for it is expressed in terms of radial power weights. 
Second, a \textit{directional version of Pitt's inequality} can be derived directly from the classical Theorem \ref{thm:PittpqFT}, see Proposition \ref{Cor-Pitt-03082026}.  
        
    \subsection{Proof of Theorem \ref{intro.thm:mainmetap}} We consider each case in turn, proving both necessity and sufficiency. \\
   {\bf Case 1 ($B=O$).} By \eqref{repformulaBO},
\begin{equation*}
    |\widehat{S}f(x)|=|\det(A)|^{-\frac{1}{2}}|f(A^{-1}x)|.
\end{equation*}
Since $A$ is invertible, \eqref{Pittgeneral} is equivalent to
\begin{equation}\label{B0-reduction}
    \big\||\cdot|^{-\beta}f\big\|_{L^q(\mathbb R^n)}
    \lesssim_A
    \big\||\cdot|^\alpha f\big\|_{L^p(\mathbb R^n)}.
\end{equation}
For necessity, fix a non-zero
$\phi\in \mathcal C_c^\infty(\{1<|x|<2\})$ and set
$\phi_\lambda(x)=\phi(\lambda x)$, $\lambda>0$. By \eqref{B0-reduction} applied to $\phi_{\lambda}$ and a change of variables,
\[
    \lambda^{\beta-\frac nq}
    \lesssim
    \lambda^{-\alpha-\frac np},
\]
hence by taking $\lambda\rightarrow 0$ and $\lambda\rightarrow\infty$ we obtain
\begin{equation}\label{B0-scaling}
    \alpha+\beta
    =
    n\left(\frac1q-\frac1p\right).
\end{equation}
Since $\alpha,\beta\geq0$, in the case $q=\infty$ this implies $p=\infty$ and $\alpha=\beta=0$. If $q<\infty$, the constraints $\alpha,\beta\geq0$ imply $q\leq p$. Suppose that $q<p$, let $R\gg 1$ and let $g_{R}$ be a smooth truncation of the function $\widetilde{g}_{R}(x)=|x|^{-n/p}\mathbf 1_{\{2<|x|<R\}}$ (a smooth function equal to $\widetilde{g}_{R}$ on $3<|x|<R/2$ so we can use the a priori estimate \eqref{B0-reduction}). Inequality \eqref{B0-reduction} applied to
\begin{equation*}
    f(x)=|x|^{-\alpha}g_{R}(x)
\end{equation*}
gives
\begin{equation*}
    \big\||x|^{-(\alpha+\beta)}g_{R}\big\|_{L^q}
    \lesssim
    \|g_{R}\|_{L^p}.
\end{equation*}
But by \eqref{B0-scaling},
\[
    \|g_R\|_{L^p}\lesssim(\log R)^{1/p},
    \qquad
    \big\||x|^{-(\alpha+\beta)}g_R\big\|_{L^q}
    \gtrsim(\log R)^{1/q},
\]
Letting $R\to\infty$ contradicts
$q<p$, thus $p=q$ and \eqref{B0-scaling} gives
$\alpha+\beta=0$. Since $\alpha,\beta\geq 0$, we conclude that $\alpha=\beta=0$.

On the other hand, observe that for $\alpha\in\bR$ and $0<p<\infty$
   
    \begin{align}
    \int_{\mathbb{R}^{n}}|x|^{\alpha p}|\widehat{S}f(x)|^{p} \rmd x  &= |\det(A)|^{-p/2}\int_{\mathbb{R}^{n}}|x|^{\alpha p}|f(A^{-1}x)|^{p} \rmd x \\
    \label{eqn:B=O_intermediate_step}
&=|\det(A)|^{1-\frac{p}{2}}\int_{\mathbb{R}^{n}}|Ax|^{\alpha p}|f(x)|^{p} \rmd x\\
&\leq |\det(A)|^{1-\frac{p}{2}}\cdot  \begin{cases}
    \sigma_{\max}(A)^{\alpha p} & \text{if $\alpha\geq0$,}\\
    \sigma_{\min}(A)^{\alpha p} & \text{if $\alpha<0$}
    \end{cases}\cdot\int_{\mathbb{R}^{n}}|x|^{\alpha p}|f(x)|^{p} \rmd x\\
    &= |\det(A)|^{ 1-\frac{p}{2}}\max\{\sigma_{\max}(A)^\alpha ,\sigma_{\min}(A)^\alpha \}^{p}\int_{\mathbb{R}^{n}}|x|^{\alpha p}|f(x)|^{p} \rmd x,
    \end{align}
which proves the additional weighted estimate stated when $B=O$, upon replacing $\alpha$ by $\gamma$. For $p=q=\infty$ and $\alpha=\beta=0$,
\begin{equation*}
    \|\widehat{S}f\|_{\infty}=|\det{(A)}|^{-\frac{1}{2}}\|f\|_{\infty}.
\end{equation*}
Thus if $\alpha,\beta\geq 0$ and $1\leq p,q\leq \infty$, then the necessary conditions obtained above are indeed sufficient.

\medskip
\noindent
{\bf Case 2 ($B$ invertible).} In what follows, we will reduce Pitt's inequality for $\widehat{S}$ to Pitt's inequality for the Fourier transform by making use of expression \eqref{compactSffree}. We begin by proving the necessity of conditions \eqref{Pitt_admiss_ii} of Definition \ref{intro.defPittAdmissible}. By \eqref{compactSffree},
    \begin{equation}
        \widehat Sf(\xi)=|\det(B)|^{-1/2}e^{i\pi DB^{-1}\xi\cdot\xi}\widehat g(B^{-1}\xi), 
\end{equation}
where $g(y)=f(y)e^{i\pi B^{-1}Ay\cdot y}$, so that $|\widehat Sf(\xi)|=|\det(B)^{-1/2}\widehat g(B^{-1}\xi)|$. If \eqref{Pittgeneral} holds, then

\begin{equation}\label{ineq1-030826}
\eqalign{
         \displaystyle\left(\int_{\rd}|x|^{\alpha p}|g(x)|^p\rmd x\right)^{1/p}&\displaystyle\gtrsim|\det(B)|^{-1/2}\left(\int_{\rd}|\xi|^{-\beta q}|\widehat g(B^{-1}\xi)|^q\rmd\xi\right)^{1/q} \cr
         &\displaystyle\gtrsim_{B}\left(\int_{\rd}|B\xi|^{-\beta q}|\widehat g(\xi)|^q\rmd\xi\right)^{1/q}\cr
         &\displaystyle\gtrsim_{B}\left(\int_{\rd}|\xi|^{-\beta q}|\widehat g(\xi)|^q\rmd\xi\right)^{1/q},
         }
    \end{equation}
where in the last inequality we used the simple bound $|B\eta|\leq\sigma_{\max}(B)|\eta|$. Up to the implicit multiplicative constant, \eqref{ineq1-030826} is identical to Pitt's inequality for the Fourier transform in Theorem \ref{thm:PittpqFT} and the mapping $f\leftrightarrow g$ is an automorphism of $\cS(\rd)$, hence the  conditions in \eqref{assumptionPitt} are necessary for \eqref{Pittgeneral} to hold.

As for sufficiency, by applying Theorem \ref{thm:PittpqFT} to $g$ we find:
\begin{align}\label{ewf}
    \left(\int_{\rd}|\eta|^{-\beta q}|\widehat g(\eta)|^q\rmd \eta\right)^{1/q}\leq K(p,q,\alpha,\beta,n)
            \left(\int_{\rd}|u|^{\alpha p}|g(u)|^p\rmd u\right)^{1/p}
\end{align}
if and only if the relations in \eqref{assumptionPitt} hold. The integral at the right-hand side of \eqref{ewf} is clearly
\begin{equation}\label{presharp0}
    \left(\int_{\rd}|u|^{\alpha p}|g(u)|^p\rmd u\right)^{1/p}=\left(\int_{\rd}|x|^{\alpha p}|f(x)|^p\rmd x\right)^{1/p}.
\end{equation}
For the left-hand side, we use the definition of $g$ to obtain
\begin{align}
    \left(\int_{\rd}|\eta|^{-\beta q}|\widehat g(\eta)|^q\rmd \eta\right)^{1/q}=|\det(B)|^{1/2}\left(\int_{\rd}|\eta|^{-\beta q}|\widehat Sf(B\eta)|^q\rmd \eta\right)^{1/q}
\end{align}
and
\begin{align}\label{presharp1}
    \left(\int_{\rd}|\eta|^{-\beta q}|\widehat Sf(B\eta)|^q\rmd \eta\right)^{1/q}&=|\det(B)|^{-1/q}\left(\int_{\rd}|B^{-1}\xi|^{-\beta q}|\widehat Sf(\xi)|^q\rmd \xi\right)^{1/q}\\
    &\geq |\det(B)|^{-1/q}\sigma_{\max}(B^{-1})^{-\beta}\left(\int_{\rd}|\xi|^{-\beta q}|\widehat Sf(\xi)|^q\rmd \xi\right)^{1/q}\\
    &=|\det(B)|^{-1/q}\sigma_{\min}(B)^{\beta}\left(\int_{\rd}|\xi|^{-\beta q}|\widehat Sf(\xi)|^q\rmd \xi\right)^{1/q}.
\end{align}
In synthesis,
\begin{equation}\label{ineq1-18082026}
     \eqalign{
     &\displaystyle\left(\int_{\rd}|\xi|^{-\beta q}|\widehat Sf(\xi)|^q\rmd \xi\right)^{1/q} \cr
     &\qquad\qquad\qquad\displaystyle\leq K(p,q,\alpha,\beta,n)|\det(B)|^{\frac 1q-\frac12}\sigma_{\min}(B)^{-\beta} \left(\int_{\rd}|x|^{\alpha p}|f(x)|^p\rmd x\right)^{1/p}.
     }
\end{equation}

\medskip
\noindent
{\bf Case 3 ($B \neq O$ not invertible).} This case covers two items of the Pitt-admissibility definition (\eqref{Pitt_admiss_iii} and \eqref{Pitt_admiss_iv}) but a distinction between them is not made, rather encompassed in the work done for Pitt's inequality for the partial Fourier transform. In Step 3.1 we show the necessity claim, whereas in Step 3.2 we show sufficiency. In both cases, we will argue for $p<\infty$. When $p=\infty$, the same proof applies with the corresponding integrals replaced by essential suprema.

\medskip
\noindent 
{\it Step 3.1. Proof of necessity of Pitt-admissibility.} In a few words, the strategy for this step is to reduce matters to the case $\widehat{S}f=\mathcal{F}_{x}f$, where $\mathcal{F}_{x}$ is the partial Fourier transform on a certain vector subspace of $\mathbb{R}^{n}$. For $y\in \ker(B)$ define
\begin{equation*}
    H(Vu,y):=g_{Ay}(u)=f(Vu+D^{\top}Ay)e^{i\pi(V^\top B^+AVu\cdot u-2V^\top C^\top Ay\cdot u)}, \qquad u\in\bR^r.
\end{equation*}
This way, 
\begin{equation}\label{idt1-17082026}
    |H(Vu,y)|=|f(Vu+D^{\top}Ay)|.
\end{equation}
Moreover the partial Fourier transform along the direction of $\ker(B)^\perp$ satisfies
\begin{equation*}    
\displaystyle\mathcal{F}_{x}H(Vz,y)=\int_{\ker(B)^{\perp}}H(x,y)e^{-2\pi iVz\cdot x}\mathrm{d}x=\int_{\mathbb{R}^{r}}H(Vu,y)e^{-2\pi iVz\cdot Vu}\mathrm{d}u=\widehat{g_{Ay}}(z), 
\end{equation*}
since $V$ is orthogonal. This way, by Lemma \ref{lemmaDecomp},
\begin{equation*}
   |\widehat{S}f(BVz+Ay)|=\mu_{S}|\mathcal{F}_{x}H(Vz,y)|.
\end{equation*}
By 
\begin{equation}\label{CV-flambda-output-inverse}
\int_{\rd}\phi(\xi)\,\rmd\xi
=
\mu_S^{-2}
\int_{\ker(B)}
\int_{\bR^r}
\phi(BV\eta+Ay_2)
\,\rmd\eta\,\rmd y_2,
\end{equation}
so that
\begin{equation}\label{ineq1-17082026}
    \eqalign{   \displaystyle\int_{\mathbb{R}^{n}}|\xi|^{-\beta q}|\widehat{S}f(\xi)|^{q} \rmd \xi&\displaystyle=\mu_{S}^{-2}\int_{\ker(B)}\int_{\mathbb{R}^{r}}|BVz+Ay|^{-\beta q}|\widehat{S}f(BVz+Ay)|^{q} \rmd z\rmd y \cr
    &\displaystyle=\mu_{S}^{q-2}\int_{\ker(B)}\int_{\mathbb{R}^{r}}|BVz+Ay|^{-\beta q}|\mathcal{F}_{x}H(Vz,y)|^{q} \rmd z\mathrm{d}y. \cr    
    }
\end{equation}
Because $Vz\cdot y=0$, triangle inequality and Cauchy-Schwarz yield
\begin{equation}\label{ineq2-17082026}
    |BVz+Ay|\lesssim_{S}|Vz+y|.
\end{equation}
Using \eqref{ineq2-17082026} in \eqref{ineq1-17082026},
\begin{equation}\label{ineq3-17082026}
    \eqalign{   \displaystyle\int_{\mathbb{R}^{n}}|\xi|^{-\beta q}|\widehat{S}f(\xi)|^{q} \rmd \xi
    &\displaystyle\gtrsim\int_{\ker(B)}\int_{\mathbb{R}^{r}}|Vz+y|^{-\beta q}|\mathcal{F}_{x}H(Vz,y)|^{q} \rmd z\mathrm{d}y \cr
    &\displaystyle=\displaystyle\int_{\ker(B)}\int_{\ker(B)^{\perp}}|\eta+y|^{-\beta q}|\mathcal{F}_{x}H(\eta,y)|^{q} \rmd \eta\mathrm{d}y. \cr
    }
\end{equation}
On the other hand, by \eqref{Edo1} and \eqref{idt1-17082026}
\begin{equation}\label{ineq4-17082026}
    \eqalign{   \displaystyle\int_{\mathbb{R}^{n}}|x|^{\alpha p}|f(x)|^{p} \rmd x&\displaystyle=\int_{\ker(B)}\int_{\ker(B)^{\perp}}|z+D^{\top}Ay|^{\alpha p}|f(z+D^{\top}Ay)|^{p} \rmd z\mathrm{d}y \cr    &\displaystyle=\int_{\ker(B)}\int_{\ker(B)^{\perp}}|z+D^{\top}Ay|^{\alpha p}|H(z,y)|^{p} \rmd z\mathrm{d}y \cr
    }
\end{equation}
Because $z\cdot y=0$, it holds that
\begin{equation}\label{ineq9-17082026}
    |z+D^{\top}Ay|\lesssim_{S} |z+y|,
\end{equation}
which combined with \eqref{ineq4-17082026} gives
\begin{equation}\label{ineq5-17082026}
    \eqalign{   \displaystyle\int_{\mathbb{R}^{n}}|x|^{\alpha p}|f(x)|^{p} \rmd x  &\displaystyle\lesssim_{S}\int_{\ker(B)}\int_{\ker(B)^{\perp}}|z+y|^{\alpha p}|H(z,y)|^{p} \rmd z\mathrm{d}y. \cr
    }
\end{equation}
We conclude that, if \eqref{Pittgeneral} holds for some $p,q,\alpha$ and $\beta$, then \eqref{ineq3-17082026} and \eqref{ineq5-17082026} imply
\begin{equation}\label{ineq6-17082026}
    \left(\int_{\ker(B)}\int_{\ker(B)^{\perp}}|\eta+y|^{-\beta q}|\mathcal{F}_{x}H(\eta,y)|^{q} \rmd \eta\mathrm{d}y\right)^{\frac{1}{q}}\lesssim_{S}\left(\int_{\ker(B)}\int_{\ker(B)^{\perp}}|z+y|^{\alpha p}|H(z,y)|^{p} \rmd z\mathrm{d}y\right)^{\frac{1}{p}},
\end{equation}
which has the exact form of Pitt's inequality for the partial Fourier transform $\mathcal{F}_{x}$ applied to $H$. To complete this step, we claim that the necessary conditions for \eqref{eq:PFT-Pitt-necessary-assumption} to hold are also necessary for \eqref{Pittgeneral}. This will follow by showing that, if \eqref{Pittgeneral} holds, then \eqref{ineq6-17082026} holds for arbitrary $H\in\mathcal{S}(\ker(B)^{\perp}\times \ker(B))$. To that end, given $H\in\mathcal{S}(\ker(B)^{\perp}\times \ker(B))$ define $f$ by
\begin{equation*}
    f(Vu+D^{\top}Ay):=H(Vu,y)e^{-i\pi(V^\top B^+AVu\cdot u-2V^\top C^\top Ay\cdot u)}.
\end{equation*}
By Proposition \ref{PropIsomorphisms} and by the definition of $V$, the map $(u,y)\in\mathbb{R}^{r}\times \ker(B)\mapsto Vu+D^{\top}Ay$ is a linear isomorphism, hence $f\in\mathcal{S}(\mathbb{R}^{n})$ is uniquely defined. Running the argument above for $f$ yields \eqref{ineq6-17082026} for the initially given $H$. We conclude that the Pitt-admissibility of $(p, q, \alpha, \beta, r, n)$ is necessary for \eqref{Pittgeneral} to hold.

\medskip
\noindent
{\it Step 3.2. Sufficiency of Pitt-admissibility.} Let us first verify two identities involving projections.

\begin{enumerate}
    \item For $z\in\ker{(B)}^{\perp}$ and $y\in\ker{(B)}$,
    \begin{equation}\label{identity-Pin-inverse}
        z+y= \left(
    \mathcal{P}_{\ker(B)^\perp}^{D^\top A(\ker(B))}
    +\mathcal{P}_{\ker(B)}
    \right)x,
    \qquad
    x=z+D^\top Ay,
    \end{equation}
    where $\mathcal{P}_{\ker(B)}$ denotes the orthogonal projection onto
$\ker(B)$.
    \item For $\eta\in\ker{(B)}^{\perp}$ and $y\in\ker{(B)}$,
    \begin{equation}\label{identity-Pout-inverse}
    \eta+y
    =
    \left(
    B^+\mathcal{P}_{R(B)}^{A(\ker(B))}
    +\mathcal{P}_{\ker(B)}D^\top
    \right)\xi,
    \qquad
    \xi=B\eta+Ay.
\end{equation}
\end{enumerate}
To verify \eqref{identity-Pin-inverse}, observe that by definition of the oblique projection associated with the decomposition
\[
    \mathbb{R}^n
    =
    \ker(B)^\perp\oplus D^\top A(\ker(B)),
\]
we have $\mathcal{P}_{\ker(B)^\perp}^{D^\top A(\ker(B))}x=z$. Moreover, the symplectic identity $D^\top A-B^\top C=I$ (take the transpose in \eqref{symplRel3}) gives
\[
    D^\top Ay=y+B^\top Cy.
\]
Since $ B^\top Cy\in R(B^\top)=\ker(B)^\perp$, it follows that
\[
    \mathcal{P}_{\ker(B)}x
    =
    \mathcal{P}_{\ker(B)}(z+y+B^\top Cy)
    =
    y.
\]
Consequently,
\begin{equation*}
    z+y
    =
    \left(
    \mathcal{P}_{\ker(B)^\perp}^{D^\top A(\ker(B))}
    +
    \mathcal{P}_{\ker(B)}
    \right)x,
\end{equation*}
which is exactly \eqref{identity-Pin-inverse}. To verify \eqref{identity-Pout-inverse}, let
\[
    \xi=B\eta+Ay,
    \qquad
    \eta\in\ker(B)^\perp,\quad y\in\ker(B).
\]
By definition of the oblique projection associated with
\[
    \mathbb{R}^n=R(B)\oplus A(\ker(B)),
\]
we have $\mathcal{P}_{R(B)}^{A(\ker(B))}\xi=B\eta.$ Since $\eta\in\ker(B)^\perp$, it follows that
\[
    B^+\mathcal{P}_{R(B)}^{A(\ker(B))}\xi
    =
    B^+B\eta
    =
    \eta.
\]
On the other hand, using the symplectic identities $D^\top B=B^\top D$ and $D^\top A=I+B^\top C$ (take the transpose in \eqref{symplRel2} and \eqref{symplRel3}, respectively), we obtain
\begin{align*}
    D^\top\xi=
    D^\top B\eta+D^\top Ay= B^\top D\eta+y+B^\top Cy=
    y+B^\top(D\eta+Cy).
\end{align*}
Since $ B^\top(D\eta+Cy)\in R(B^\top)=\ker(B)^\perp$, we conclude that $\mathcal{P}_{\ker(B)}D^\top\xi=y$. Therefore,
\begin{equation*}
    \eta+y
    =
    \left(
    B^+\mathcal{P}_{R(B)}^{A(\ker(B))}
    +
    \mathcal{P}_{\ker(B)}D^\top
    \right)\xi,
\end{equation*}
which is exactly \eqref{identity-Pout-inverse}. Now we are ready to start proving sufficiency.

If $f\in\mathcal{S}(\mathbb{R}^{n})$, then
\begin{equation*}
    H(Vu,y):=f(Vu+D^{\top}Ay)e^{i\pi(V^\top B^+AVu\cdot u-2V^\top C^\top Ay\cdot u)}, \qquad u\in\bR^r,
\end{equation*}
belongs to $\mathcal{S}(\ker(B)^{\perp}\times \ker(B))$. By \eqref{ineq1-17082026} and \eqref{identity-Pout-inverse},
\begin{equation}\label{ineq7-17082026}
    \eqalign{   &\displaystyle\int_{\ker(B)}\int_{\ker(B)^{\perp}}|\eta+y|^{-\beta q}|\mathcal{F}_{x}H(\eta,y)|^{q} \rmd \eta\mathrm{d}y \cr
    &\displaystyle\qquad\qquad\qquad=\int_{\ker(B)}\int_{\ker(B)^{\perp}}\left|\left(
    B^+\mathcal{P}_{R(B)}^{A(\ker(B))}
    +\mathcal{P}_{\ker(B)}D^\top
    \right)(B\eta+Ay)\right|^{-\beta q}|\mathcal{F}_{x}H(\eta,y)|^{q} \rmd \eta\mathrm{d}y \cr
    &\qquad\qquad\qquad\displaystyle\geq\left\|B^+\mathcal{P}_{R(B)}^{A(\ker(B))}
    +\mathcal{P}_{\ker(B)}D^\top\right\|^{-\beta q}\int_{\ker(B)}\int_{\mathbb{R}^{r}}|BVz+Ay|^{-\beta q}|\mathcal{F}_{x}H(Vz,y)|^{q} \rmd z\mathrm{d}y \cr
    &\qquad\qquad\qquad=\displaystyle\frac{1}{\mu_{S}^{q-2}}\left\|B^+\mathcal{P}_{R(B)}^{A(\ker(B))}
    +\mathcal{P}_{\ker(B)}D^\top\right\|^{-\beta q}\int_{\mathbb{R}^{n}}|\xi|^{-\beta q}|\widehat{S}f(\xi)|^{q} \rmd \xi. \cr
 }
\end{equation}
Similarly, by \eqref{ineq4-17082026} and \eqref{identity-Pin-inverse},
\begin{equation}\label{ineq8-17082026}
    \eqalign{   &\displaystyle\int_{\ker(B)}\int_{\ker(B)^{\perp}}|z+y|^{\alpha p}|H(z,y)|^{p} \rmd z\mathrm{d}y \cr
    &\qquad\qquad\qquad\qquad\displaystyle=\int_{\ker(B)}\int_{\ker(B)^{\perp}}\left|\left(
    \mathcal{P}_{\ker(B)^\perp}^{D^\top A(\ker(B))}
    +\mathcal{P}_{\ker(B)}
    \right)(z+D^{\top}Ay)\right|^{\alpha p}|H(z,y)|^{p} \rmd z\mathrm{d}y \cr    
    &\qquad\qquad\qquad\qquad\displaystyle\leq\|\mathcal{P}_{\ker(B)^\perp}^{D^\top A(\ker(B))}
    +\mathcal{P}_{\ker(B)}\|^{\alpha p} \int_{\mathbb{R}^{n}}|x|^{\alpha p}|f(x)|^{p}\mathrm{d}x. \cr
    }
\end{equation}
Let $K^{\ast}(p,q,\alpha,\beta,r,n)$ be the constant $K^{\ast}$ in the right-hand side of \eqref{eq:PFT-Pitt-necessary-assumption}. Since we are assuming Pitt-admissibility, combining \eqref{ineq7-17082026}, \eqref{ineq8-17082026} and \eqref{eq:PFT-Pitt-necessary-assumption} yields
\begin{equation}
\displaystyle\left(\int_{\mathbb{R}^{n}}|\xi|^{-\beta q}|\widehat{S}f(\xi)|^{q} \rmd \xi\right)^{\frac{1}{q}} \leq K_{S}(p,q,\alpha,\beta,r,n)\left(\int_{\mathbb{R}^{n}}|x|^{\alpha p}|f(x)|^{p}\mathrm{d}x\right)^{\frac{1}{p}},\end{equation}
where
\begin{equation}\label{defKS2}
    K_{S}(p,q,\alpha,\beta,r,n):=\mu_{S}^{1-\frac{2}{q}}\|\mathcal{P}_{\ker(B)^\perp}^{D^\top A(\ker(B))}
    +\mathcal{P}_{\ker(B)}\|^{\alpha}\left\|B^+\mathcal{P}_{R(B)}^{A(\ker(B))}
    +\mathcal{P}_{\ker(B)}D^\top\right\|^{\beta} K^{\ast}(p,q,\alpha,\beta,r,n).
\end{equation}
Thus these conditions are sufficient for \eqref{Pittgeneral} to hold.

\begin{remark}[Explicit constants] \label{rem:constants}
Equations \eqref{ineq1-18082026} and \eqref{defKS2} give the explicit constant for Theorem \ref{intro.thm:mainmetap} for $r=n$ and $1 \leq r < n$, respectively. A sharper constant can be obtained if $p=q$, see Remark \ref{rem:Step344} below.
\end{remark}

\subsection{Directional Pitt's inequality}
The above proof of the case $1\leq \mathrm{rank}(B)<n$ did not distinguish between items \eqref{Pitt_admiss_iii} and \eqref{Pitt_admiss_iv} in the Pitt-admissibility definition. In this subsection we state and prove a new Pitt-type inequality that emphasises, at the level of the weight present in the integrand, the distinction between the effective and singular directions. In the introduction, we discussed that this refined directional version acquires its own importance, as it entails the logarithmic uncertainty principle for metaplectic operators, see Proposition \ref{intro.logUP}.
Moreover, as anticipated in Remark \ref{rem:constants} above, this procedure yields a sharper constant for $p=q$, see Remarks \ref{rem:Step344} and \ref{remarkmatrices} below.

        \begin{proposition}\label{Cor-Pitt-03082026}
            Let $\widehat S\in\Mp(n,\bR)$ with $ 1\leq r=\mathrm{rank}(B)<n$. Then,
            \begin{equation}\label{PittForMetaps3}
        \left(\int_{\rd}|{B^+\mathcal{P}_{R(B)}^{A(\ker(B))}}\xi|^{-\beta q}|\widehat Sf(\xi)|^q\rmd\xi\right)^{1/q}\leq \widetilde{K_S}(p,q,\alpha,\beta,r)\left(\int_{\rd}|\mathcal{P}_{\ker(B)^\perp}^{D^\top A(\ker(B))}x|^{\alpha p}|f(x)|^p\rmd x\right)^{1/p},
    \end{equation}
    holds for $f\in\mathcal{S}(\rd)$ if and only if the conditions \eqref{Pitt_admiss_iii} of Definition \ref{intro.defPittAdmissible} are met.
    In which case,
    \begin{equation}\label{wfeds}
        \widetilde{K_S}(p,q,\alpha,\beta,r,n)=K(p,q,\alpha,\beta,r)[q_{R(B)^\perp}(A^\top)\sigma(B)]^{\frac 1q-\frac 12 }.
    \end{equation}
        \end{proposition}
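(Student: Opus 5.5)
The plan is to fibre everything over $y\in\ker(B)$ via Lemma \ref{lemmaDecomp} and then apply the classical Theorem \ref{thm:PittpqFT} in dimension $r$ on each fibre. Fix an orthonormal parametrisation $V:\bR^r\to\ker(B)^\perp$. For $y\in\ker(B)$ set $g_{Ay}$ as in \eqref{defgxi2}, so that $|g_{Ay}(u)|=|f(Vu+D^\top Ay)|$ and $|\widehat Sf(BVz+Ay)|=\mu_S|\widehat{g_{Ay}}(z)|$.

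The key observation is that both directional weights collapse to the fibre variables. By the projection identities \eqref{identity-Pin-inverse} and \eqref{identity-Pout-inverse} (more precisely, their components), we have
\[
\mathcal{P}_{\ker(B)^\perp}^{D^\top A(\ker(B))}(Vu+D^\top Ay)=Vu,\qquad B^+\mathcal{P}_{R(B)}^{A(\ker(B))}(BVz+Ay)=B^+BVz=Vz.
\]
Hence $|{\cdot}|$ of these equals $|u|$ and $|z|$ respectively. Using the change of variables \eqref{CV-flambda-output-inverse} on the output side and \eqref{Edo1} on the input side (with the Jacobian factor $q_{\ker(B)}(A)$, $\mu_S^{-2}$ etc. tracked), the left-hand side of \eqref{PittForMetaps3} to the power $q$ becomes
\[
\mu_S^{q-2}\int_{\ker(B)}\int_{\bR^r}|z|^{-\beta q}|\widehat{g_{Ay}}(z)|^q\,\rmd z\,\rmd y,
\]
and the right-hand side integral becomes $c_S\int_{\ker(B)}\int_{\bR^r}|u|^{\alpha p}|g_{Ay}(u)|^p\,\rmd u\,\rmd y$. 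Here $c_S$ is the Jacobian constant; presumably it equals $1$ with the paper's conventions for \eqref{Edo1}, as used in \eqref{ineq4-17082026}.

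\textbf{Sufficiency.} When $p=q$ and condition \eqref{Pitt_admiss_iii} holds, we apply Theorem \ref{thm:PittpqFT}(a) in $\bR^r$ to each $g_{Ay}$, raise to the power $p$, and integrate in $y$. Since $p=q$, the fibre integrals add up directly. The constant is $K(p,p,\alpha,\beta,r)\,\mu_S^{1-2/p}$ times the Jacobian factors. We then rewrite $\mu_S^{1-2/q}=[q_{R(B)^\perp}(A^\top)\sigma(B)]^{1/q-1/2}$ using \eqref{defmus}, which gives \eqref{wfeds}. The bookkeeping of $q_{\ker(B)}(A)$ against \eqref{CV-flambda-output-inverse} is what I expect to require care. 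For $p=q=\infty$ the same argument is run with suprema, although admissibility excludes that case anyway.

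\textbf{Necessity.} Reversing the construction as in Step 3.1, every $H\in\mathcal S(\bR^r\times\ker(B))$ arises as $(u,y)\mapsto g_{Ay}(u)$ for some $f\in\mathcal S(\rd)$. This holds because $(u,y)\mapsto Vu+D^\top Ay$ is an isomorphism, by Proposition \ref{PropIsomorphisms}. The inequality thus becomes
\[
\big\||z|^{-\beta}\mathcal F_{u}H\big\|_{L^q_{z,y}}\lesssim\big\||u|^{\alpha}H\big\|_{L^p_{u,y}}.
\]
This is Pitt's inequality for the partial Fourier transform, but with weights depending \emph{only} on the Fourier variables. Testing $H=h\otimes\psi(\lambda\,\cdot)$ and letting $\lambda\to\infty$, exactly as in \eqref{PFT-LpLq-fibre-test}, gives $q\le p$. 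Testing tensors $h\otimes\psi$ and letting the $y$-profile spread, with $\lambda\to0$, gives $\|\psi\|_q\lesssim\lambda^{(n-r)(1/q-1/p)}\|\psi\|_p$, which forces $p\le q$. Hence $p=q$. With $p=q$, fixing $\psi$ factors out $\|\psi\|_p$ and yields Pitt's inequality for $\mathcal F$ on $\bR^r$, so Theorem \ref{thm:PittpqFT} gives the relations in \eqref{Pitt_admiss_iii}. The case $p=1,q=\infty$ is excluded by $q\le p$ unless $p=q$, which Theorem \ref{thm:PittpqFT} does not allow.

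The main obstacle is the decoupled scaling argument for $q<p$. In \eqref{PFT-LpLq-fibre-test} only one direction of scaling was needed, because the radial weights coupled $u$ with $x$. Here, since the weights ignore $y$ entirely, dilation in $y$ in both directions is available, and this is exactly why regime \eqref{Pitt_admiss_iv} disappears.
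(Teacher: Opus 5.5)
Your proposal is correct and follows essentially the same route as the paper. You fibre over the singular variable via Lemma \ref{lemmaDecomp}, note that both directional weights reduce exactly to $|u|$ and $|z|$, and apply the classical Pitt inequality in $\bR^r$ fibrewise when $p=q$, which gives the constant $\mu_S^{1-2/p}K(p,p,\alpha,\beta,r)$. For necessity you test tensor-type data, fixing one factor to recover Pitt on $\bR^r$ and dilating the singular factor to force $p=q$.

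The differences are cosmetic. You parametrise fibres by $y\in\ker(B)$, so the Jacobian constant from \eqref{Edo1} is indeed $1$ and no $q_{\ker(B)}(A)$ appears, whereas in the paper that factor cancels. You also route necessity through the fibred inequality for arbitrary $H$, rather than plugging tensor-form $f$ in directly.
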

        Observe that if $r=n$ then the effective directions coincide with $\rd$, moreover $B^+ = B^{-1}$. The analogue of \eqref{PittForMetaps3} when $B\in\mathrm{GL}(n,\bR)$ follows directly from \eqref{presharp0} and \eqref{presharp1}, that is
        \begin{equation}\label{PittForMetaps4}
        \left(\int_{\rd}|{B^{-1}}\xi|^{-\beta q}|\widehat Sf(\xi)|^q\rmd\xi\right)^{1/q}\leq |\det(B)|^{\frac 1q-\frac 12}K(p,q,\alpha,\beta,n)\left(\int_{\rd}|x|^{\alpha p}|f(x)|^p\rmd x\right)^{1/p},
    \end{equation}
    which holds if and only if conditions \eqref{Pitt_admiss_ii} of Definition \ref{intro.defPittAdmissible} are met. This is consistent with \eqref{PittForMetaps3}, as in this case $\sigma(B)=|\det(B)|$ and $q_{R(B)^\perp}(A^\top)=1$. 
\begin{proof}
    {\bf Step 1 (necessity).} To prove the necessity of conditions \eqref{Pitt_admiss_iii} from Definition \ref{intro.defPittAdmissible}, let $f\in\cS(\rd)$ be in the form 
    \begin{equation}
        f(Vu+D^\top Ay)=h_1(Vu)h_2(D^\top Ay)e^{-i\pi (V^\top B^+AVu\cdot u-2V^\top C^\top Ay\cdot u)}, \qquad u\in\bR^r, y\in\ker(B),
    \end{equation}
    where $h_1\in\cS(\ker(B)^\perp)$ and $h_2\in\cS(D^\top A(\ker(B)))$. The corresponding auxiliary function $g_{Ay}$ of Lemma \ref{lemmaDecomp} tensorises as $g_{Ay}(u)=h_1(Vu)h_2(D^\top Ay)$ and consequently \eqref{Sfghat} reads as
    \begin{equation}
        |\widehat Sf(\xi_1+Ay)|=\mu_S|\widehat g_{Ay}(V^\top B^+\xi_1)|.
    \end{equation}
    By changing variables on the left-hand side of \eqref{PittForMetaps3}, we obtain
    \begin{align}
       & \left(\int_{\rd}|{B^+\mathcal{P}_{R(B)}^{A(\ker(B))}}\xi|^{-\beta q}|\widehat Sf(\xi)|^q\rmd\xi\right)^{1/q}\\
        &\qquad\qquad\gtrsim \left(\int_{\ker(B)}\int_{R(B)}|{B^+\mathcal{P}_{R(B)}^{A(\ker(B))}}(\xi_1+Ay)|^{-\beta q}|\widehat Sf(\xi_1+Ay)|^q\rmd\xi_1\rmd y\right)^{1/q} \\
        &\qquad\qquad \gtrsim \left(\int_{\ker(B)}\int_{R(B)}|{B^+\xi_1}|^{-\beta q}|\widehat g_{Ay}(V^\top B^+\xi_1)|^q\rmd\xi_1\rmd y\right)^{1/q}\\
        &\qquad\qquad\gtrsim\left(\int_{\ker(B)}\int_{R(B)}|{\xi_1}|^{-\beta q}|\widehat{h_1\circ V}(V^\top B^+\xi_1)|^q|h_2(D^\top Ay)|^q\rmd\xi_1\rmd y\right)^{1/q}\\
        &\qquad\qquad\gtrsim\left(\int_{\ker(B)}\int_{\bR^r}|{\eta}|^{-\beta q}|\widehat{h_1\circ V}(\eta)|^q|h_2(D^\top Ay)|^q\rmd\eta\rmd y\right)^{1/q}\\
        &\qquad\qquad\gtrsim\||\cdot|^{-\beta}\widehat{h_1\circ V}\|_{L^q(\mathbb{R}^r)}\|h_2\circ D^\top A\|_{L^q(\ker(B))}.
    \end{align}
    Concerning the right-hand side of \eqref{PittForMetaps3}, we have
    \begin{align}
        &\left(\int_{\rd}|\mathcal{P}_{\ker(B)^\perp}^{D^\top A(\ker(B))}x|^{\alpha p}|f(x)|^p\rmd x\right)^{1/p}\\
        &\qquad\qquad\lesssim 
        \left(\int_{\ker(B)}\int_{\bR^r}|\mathcal{P}_{\ker(B)^\perp}^{D^\top A(\ker(B))}(Vu+D^\top Ay)|^{\alpha p}|h_1(Vu)h_2(D^\top Ay)|^p\rmd u\rmd y\right)^{1/p}\\
        &\qquad\qquad= \left(\int_{\ker(B)}\int_{\bR^r}|Vu|^{\alpha p}|h_1(Vu)h_2(D^\top Ay)|^p\rmd u\rmd y\right)^{1/p}\\
        &\qquad\qquad\lesssim\left(\int_{\ker(B)}\int_{\bR^r}|u|^{\alpha p}|(h_1\circ V)(u)h_2(D^\top Ay)|^p\rmd u\rmd y\right)^{1/p}\\
        &\qquad\qquad=\||\cdot|^\alpha (h_1\circ V)\|_{L^p(\mathbb{R}^r)}\|h_2\circ D^\top A\|_{L^p(\ker(B))}.
    \end{align}
    Choose $h_2\circ D^\top A=e^{-\pi|\cdot|^2}$, so that both its $L^p(\ker(B))$ and $L^q(\ker(B))$ norms are finite and positive. We thereby obtain for $\widetilde h_1=h_1\circ V$,
    \begin{equation}
        \||\cdot|^{-\beta}\widehat{\widetilde h_1}\|_{L^q(\mathbb{R}^r)}\lesssim\frac{\|h_2\circ D^\top A\|_{L^p(\ker(B))}}{\|h_2\circ D^\top A\|_{L^q(\ker(B))}}\||\cdot|^\alpha \widetilde h_1\|_{L^p(\mathbb{R}^r)}.
    \end{equation}
    The mapping $V:\bR^r\to\ker(B)^\perp$ is an isomorphism, so $h_1\in\cS(\ker(B)^\perp)\mapsto \widetilde h_1=h_1\circ V\in\cS(\bR^r)$ is also an automorphism. Consequently, by Theorem \ref{thm:PittpqFT}, the conditions \eqref{assumptionPitt} in dimension $r$
    are necessary for \eqref{PittForMetaps3} to hold. On the other hand, the same computation above shows that
    \begin{equation}\label{refsd}
         \|h_2\circ D^\top A\|_{L^q(\ker(B))}\lesssim \frac{\||\cdot|^\alpha (h_1\circ V)\|_{L^p(\mathbb{R}^r)}}{\||\cdot|^{-\beta}\widehat{h_1\circ V}\|_{L^q(\mathbb{R}^r)}}\|h_2\circ D^\top A\|_{L^p(\ker(B))}.
    \end{equation}
    Let $W:\bR^{n-r}\to D^\top A(\ker(B))$ be a linear parametrisation of $D^\top A(\ker(B))$. Then, equation \eqref{refsd} reads for $\widetilde h_2(v)=h_2\circ W(v)$,
    \begin{equation}\label{3rewsdzd}
        \|\widetilde h_2\|_{q}\lesssim \frac{\||\cdot|^\alpha (h_1\circ V)\|_{L^p(\mathbb{R}^r)}}{\||\cdot|^{-\beta}\widehat{h_1\circ V}\|_{L^q(\mathbb{R}^r)}}\|\widetilde h_2\|_{p}.
    \end{equation}
    Choose $h_1$ so that the ratio in \eqref{3rewsdzd} is positive and finite. Since the mapping $h_2\in\cS(D^\top A(\ker(B)))\mapsto \widetilde h_2\in\cS(\bR^{n-r})$ is an isomorphism, applying \eqref{3rewsdzd} to the dilations $(\widetilde h_2)_\lambda(y)=\widetilde h_2(\lambda y)$ and the assumption $r<n$ yields $p=q$. This proves the necessity of conditions \eqref{Pitt_admiss_iii} of Definition \ref{intro.defPittAdmissible}.
    
    \medskip
    \noindent
    {\bf Step 2 (sufficiency).}
    We begin by applying \eqref{eqn:pitt2} to $g_{\xi_2}\in\mathcal{S}(\bR^r)$ under the extra assumption $p=q$:
\begin{equation}\label{ThesisForg}
    \int_{\bR^r}|\eta|^{-\beta p}|\widehat g_{\xi_2}(\eta)|^p\rmd \eta\leq K(p,p,\alpha,\beta,r)^p\int_{\bR^r}|u|^{\alpha p}|g_{\xi_{2}}(u)|^p\rmd u,
\end{equation}
Recall that we are denoting $x_1=Vu$. Since $|Vu|^{2}=u\cdot V^{\top}Vu=u\cdot u=|u|^{2}$, we have $|u|=|x_{1}|$. By \eqref{defgxi2}, 
    \begin{align}
     \int_{\mathbb{R}^r}|u|^{\alpha p}|g_{\xi_2}(u)|^p\rmd u=\int_{\mathbb{R}^r}|u|^{\alpha p}|f(Vu+D^\top\xi_2)|^p\mathrm{d}u=\int_{\ker(B)^\perp}|x_1|^{\alpha p}|f(x_1+D^\top\xi_2)|^p\rmd x_1.
    \end{align}
   Integration over $A(\ker(B))$ and formulas \eqref{CV} and \eqref{Edo1} yield
    \begin{equation}\begin{split}\label{geds}
        \int_{A(\ker(B))}\int_{\mathbb R^r}|u|^{ p \alpha} |g_{\xi_2}(u)|^p\rmd u\rmd \xi_2&=\int_{A(\ker(B))}\int_{\ker(B)^\perp}|x_1|^{\alpha p}|f(x_1+D^\top\xi_2)|^p\rmd x_1\rmd \xi_2\\
        &=q_{\ker(B)}(A)\int_{\ker(B)}\int_{\ker(B)^\perp}|x_1|^{\alpha p}|f(x_1+D^\top Ay_2)|^p\rmd x_1\rmd y_2 \\
        &=q_{\ker(B)}(A)\int_{\rd}|\mathcal{P}_{\ker{(B)}^{\perp}}^{D^{T}A(\ker(B))}x|^{\alpha p}|f(x)|^p\rmd x,
        \end{split}
    \end{equation}
    where we recall that $x_{1}$ in the penultimate integral above is the image of $x\in\mathbb{R}^{n}$ under the projection $\mathcal{P}_{\ker{(B)}^{\perp}}^{D^{T}A(\ker(B))}$.
    Concerning the left-hand side of \eqref{ThesisForg}, we use Lemma \ref{lemmaDecomp} to write
    \begin{align}
        \int_{\mathbb{R}^r}|\eta|^{-\beta p}|\widehat{g_{\xi_2}}(\eta)|^p\rmd \eta=\frac 1{\mu_S^p}\int_{\mathbb{R}^r}|\eta|^{-\beta p}|\widehat Sf(BV\eta+\xi_2)|^p\rmd \eta.
    \end{align}
    Recall by Lemma \ref{lemmaDecomp} that $BV\eta=\xi_1$. This way, 
    \begin{equation}
        B^{+}\xi_{1}=B^{+}BV\eta=V\eta,
    \end{equation}
    hence $|B^{+}\xi_{1}|=|V\eta|=|\eta|$. By \eqref{CV}, we get
    \begin{align}\label{eri}
        \int_{\mathbb{R}^r}|\eta|^{-\beta p}|\widehat{g_{\xi_2}}(\eta)|^p\rmd \eta=\frac 1{\sigma(B)\mu_S^p}\int_{R(B)}|B^+\xi_1|^{-\beta p}|\widehat Sf(\xi_1+\xi_2)|^p\rmd \xi_1.
    \end{align}
    By integrating \eqref{eri} over $A(\ker(B))$ and using the explicit expression of $\mu_S$ in \eqref{defmus}, we obtain
    \begin{align}
        \int_{A(\ker(B))}&\int_{\mathbb{R}^r}|\eta|^{-\beta p}|\widehat{g_{\xi_2}}(\eta)|^p\rmd \eta\rmd\xi_2=\frac 1{\sigma(B)\mu_S^p}\int_{A(\ker(B))}\int_{R(B)}|B^+\xi_1|^{-\beta p}|\widehat Sf(\xi_1+\xi_2)|^p\rmd \xi_1\rmd \xi_2\\
        &=q_{R(B)^\perp}(A^\top)^{p/2}\sigma(B)^{\frac p2 -1}\int_{A(\ker(B))}\int_{R(B)}|B^+\xi_1|^{-\beta p}|\widehat Sf(\xi_1+\xi_2)|^p\rmd\xi_1\rmd\xi_2\\
        &=q_{R(B)^\perp}(A^\top)^{p/2}\sigma(B)^{\frac p2 -1}q_{\ker(B)}(A)\int_{\ker(B)}\int_{R(B)}|B^+\xi_1|^{-\beta p}|\widehat Sf(\xi_1+A\xi_2')|^p\rmd\xi_1\rmd\xi_2'\\
        &
        =q_{R(B)^\perp}(A^\top)^{p/2}\sigma(B)^{\frac p2 -1}q_{\ker(B)}(A)\int_{A^\top(R(B)^\perp)}\int_{R(B)}|B^+\xi_1|^{-\beta p}|\widehat Sf(\xi_1+A\xi_2')|^p\rmd\xi_1\rmd\xi_2'\\
     \label{fromhere}
        &=q_{R(B)^\perp}(A^\top)^{1+\frac p2}\sigma(B)^{\frac p2 -1}q_{\ker(B)}(A)\int_{R(B)^\perp}\int_{R(B)}|B^+\xi_1|^{-\beta p}|\widehat Sf(\xi_1+AA^\top\xi_2'')|^p\rmd\xi_1\rmd\xi_2''
    \end{align}
    where in the final two identities we used Proposition \ref{PropIsomorphisms} $(ii)$ followed by \eqref{CV}. 
    By \eqref{Edo2}, we can conclude
    \begin{align}
        \int_{A(\ker(B))}&\int_{\mathbb{R}^r}|\eta|^{-\beta p}|\widehat{g_{\xi_2}}(\eta)|^p\rmd \eta\rmd\xi_2
        \label{presharp2}
        =[q_{R(B)^\perp}(A^\top)\sigma(B)]^{\frac p2 -1}q_{\ker(B)}(A)\int_{\rd}|B^+{\mathcal{P}_{R(B)}^{A(\ker(B))}\xi}|^{-\beta p}|\widehat Sf(\xi)|^p\rmd\xi.
    \end{align}
    In synthesis, integrating \eqref{ThesisForg} over $A(\ker(B))$, yields 
    \begin{equation}\label{PittForMetaps2}\begin{split}
        &\left(\int_{\rd}|B^+{\mathcal{P}_{R(B)}^{A(\ker(B))}\xi}|^{-\beta p}|\widehat Sf(\xi)|^p\rmd\xi\right)^{1/p}\\
        &\hspace{3cm}\leq[q_{R(B)^\perp}(A^\top)\sigma(B)]^{\frac 1p -\frac 12}K(p,p,\alpha,\beta,r)\left(\int_{\rd}|\mathcal{P}_{\ker(B)^\perp}^{D^\top A(\ker(B))}x|^{\alpha p}|f(x)|^p\rmd x\right)^{1/p} \cr
        &\hspace{3cm}=\mu_{S}^{1-\frac{2}{p}}K(p,p,\alpha,\beta,r)\left(\int_{\rd}|\mathcal{P}_{\ker(B)^\perp}^{D^\top A(\ker(B))}x|^{\alpha p}|f(x)|^p\rmd x\right)^{1/p}. \cr
        \end{split}
    \end{equation}
    
\end{proof}
\begin{remark}\label{rem:Step344}
A consequence of \eqref{PittForMetaps3} is an additional radial Pitt's inequality of the form \eqref{Pittgeneral}, with the constant appearing in Remark \ref{rem:constants}.
    This merely makes use of the observation that the term $|B^+{\mathcal{P}_{R(B)}^{A(\ker(B))}\xi}|$ at the left-hand side of \eqref{PittForMetaps3} can be estimated from above as follows:
    \begin{align}\label{estgg1}
        |B^+{\mathcal{P}_{R(B)}^{A(\ker(B))}\xi}|\leq\Vert B^{+}\mathcal{P}_{R(B)}^{A(\ker(B))}\Vert|\xi|,
    \end{align}
    where the spectral norm is implied, and similarly
    \begin{equation}\label{estgg2}
        |\mathcal{P}_{\ker(B)^\perp}^{D^\top A(\ker(B))}x|\leq \Vert\mathcal{P}_{\ker(B)^\perp}^{D^\top A(\ker(B))}\Vert |x|.
    \end{equation}
    Specifically, we obtain \eqref{Pittgeneral} with
     \begin{equation}\label{eqn53}
        K_S(p,q,\alpha,\beta,r,n)=\mu_{S}^{1-\frac{2}{q}}\left\Vert\mathcal{P}_{\ker(B)^\perp}^{D^\top A(\ker(B))}\right\Vert^{\alpha }\left\Vert B^{+}\mathcal{P}_{R(B)}^{A(\ker(B))}\right\Vert^{\beta}
        K(p,q,\alpha,\beta,r).
    \end{equation}
    where $K(p,q,\alpha,\beta,r)$ is defined as in Theorem \ref{thm:PittpqFT}.
    For $r=n$,
    \begin{equation}
        K_S(p,q,\alpha,\beta,r,n)=K(p,q,\alpha,\beta,n)|\det(B)|^{\frac 1q-\frac12}\sigma_{\min}(B)^{-\beta}.
    \end{equation}
    In both cases, the constant $K(p,q,\alpha,\beta,n)$ can be replaced by the sharper $K_{r,\alpha}$ when $p=q=2$.
\end{remark}

\begin{remark} 
    In the directional setting in the case of $1\leq r<n$, the singular directions force $p=q$. Indeed, after separating the effective and singular variables, there remains an unweighted $L^p$-$L^q$ estimate in the singular directions, which by scaling can hold only if $p=q$. 
    Conversely, in the radial inequality the weights also act on the singular directions, allowing the additional regime $q<p$ described in \eqref{Pitt_admiss_iv}. This highlights the different role played by the singular directions in the directional and radial settings from the perspective of Pitt's inequality. 
\end{remark}
    
    \begin{remark}The directional inequalities \eqref{PittForMetaps3} and \eqref{PittForMetaps4} may be viewed as versions of \eqref{Pittgeneral} in which the effective directions in the domain of $f$ are correctly paired with those in the domain of $\widehat{S}f$. This pairing is evident from the projections appearing in the radial weights of both the inequalities.
    \end{remark}
       
       \begin{remark}\label{remarkmatrices}
        Recall that the constant $K(2,2,\alpha,\alpha,r)=K_{r,\alpha}$ is sharp for the Fourier transform.
        On the other hand, inequality \eqref{PittForMetaps3} for $p=q=2$ gives 
        \begin{equation}
            \int_{\rd}|B^{+}\mathcal{P}_{R(B)}^{A(\ker(B))}\xi|^{-2\alpha}|\widehat Sf(\xi)|^2\rmd \xi\leq K_{r,\alpha}\int_{\rd}|\mathcal{P}_{\ker(B)^\perp}^{D^\top A(\ker(B))}x|^{2\alpha} |f(x)|^2\rmd x,
        \end{equation}
        where $0\leq\alpha<r/2$. This is precisely Proposition \ref{intro.thm:Pittpq2}.
        This inequality is sharp because it can be obtained by a sequence of identities applied to a sharp inequality. Formally, let $(h_1^{(k)})_k\subset\mathcal{S}(\bR^r)$ be a sequence of functions so that
        \begin{equation}
            \lim_{k\to\infty}\frac{\int_{\bR^r}|\eta|^{-2\alpha}|\widehat h_1^{(k)}(\eta)|^2\rmd\eta}{\int_{\bR^r}|u|^{2\alpha}|h_1^{(k)}(u)|^2\rmd u}=K_{r,\alpha}.
        \end{equation}
        Take a sequence $f^{(k)}\in\mathcal{S}(\rd)$ so that the corresponding $g_{\xi_2}^{(k)}$ in \eqref{defgxi2} tensorises: $$g_{\xi_2}^{(k)}(u)=h_1^{(k)}(u)h_2(D^\top\xi_2), \qquad u\in\bR^r,\; \xi_2\in A(\ker(B)).$$
        By \eqref{Sfghat}, the modulus of $\widehat Sf$ also tensorises as
        $|\widehat Sf(BV\eta+\xi_2)|=\mu_S|\widehat{h_1^{(k)}}(\eta)h_2(D^\top\xi_2)|$. Consequently, integrating over $A(\ker(B))$ and using \eqref{presharp2} and \eqref{geds} with
        $p=2$, for which $[q_{R(B)^\perp}(A^\top)\sigma(B)]^{\frac p2-1}=1$, we obtain the identities
        \begin{align}\label{sharpnum}
            \int_{A(\ker(B))}\int_{\bR^r}|\eta|^{-2\alpha}|\widehat{g_{\xi_2}^{(k)}}(\eta)|^2\rmd\eta\rmd\xi_2
            &=q_{\ker(B)}(A)\int_{\rd}|B^+\mathcal{P}_{R(B)}^{A(\ker(B))}\xi|^{-2\alpha}|\widehat Sf^{(k)}(\xi)|^2\rmd\xi,\\
            \label{sharpden}
            \int_{A(\ker(B))}\int_{\bR^r}|u|^{2\alpha}|g_{\xi_2}^{(k)}(u)|^2\rmd u\rmd\xi_2
            &=q_{\ker(B)}(A)\int_{\rd}|\mathcal{P}_{\ker(B)^\perp}^{D^\top A(\ker(B))}x|^{2\alpha}|f^{(k)}(x)|^2\rmd x.
        \end{align}
        On the other hand, the tensorisation of $|g_{\xi_2}^{(k)}|$ and of $|\widehat{g_{\xi_2}^{(k)}}|$
        makes both left-hand sides factor, with the same factor
        $\Vert h_2(D^\top\cdot)\Vert_{L^2(A(\ker(B)))}^2>0$:
        \begin{align}
            \int_{A(\ker(B))}\int_{\bR^r}|\eta|^{-2\alpha}|\widehat{g_{\xi_2}^{(k)}}(\eta)|^2\rmd\eta\rmd\xi_2
            &=\Vert h_2(D^\top\cdot)\Vert_{L^2(A(\ker(B)))}^2\int_{\bR^r}|\eta|^{-2\alpha}|\widehat{h_1^{(k)}}(\eta)|^2\rmd\eta,\\
            \int_{A(\ker(B))}\int_{\bR^r}|u|^{2\alpha}|g_{\xi_2}^{(k)}(u)|^2\rmd u\rmd\xi_2
            &=\Vert h_2(D^\top\cdot)\Vert_{L^2(A(\ker(B)))}^2\int_{\bR^r}|u|^{2\alpha}|h_1^{(k)}(u)|^2\rmd u.
        \end{align}
        Dividing \eqref{sharpnum} by \eqref{sharpden}, both the geometric factor $q_{\ker(B)}(A)$ and the
        singular contribution of $\Vert h_2(D^\top\cdot)\Vert_{L^2(A(\ker(B)))}^2$ cancel, and we are left with
        \begin{equation}
            \frac{\displaystyle\int_{\rd}|B^+\mathcal{P}_{R(B)}^{A(\ker(B))}\xi|^{-2\alpha}|\widehat Sf^{(k)}(\xi)|^2\rmd\xi}
            {\displaystyle\int_{\rd}|{\mathcal{P}_{\ker(B)^\perp}^{D^\top A(\ker(B))}x}|^{2\alpha}|f^{(k)}(x)|^2\rmd x}
            =\frac{\displaystyle\int_{\bR^r}|\eta|^{-2\alpha}|\widehat{h_1^{(k)}}(\eta)|^2\rmd\eta}
            {\displaystyle\int_{\bR^r}|u|^{2\alpha}|h_1^{(k)}(u)|^2\rmd u}\xrightarrow[k\to\infty]{}K_{r,\alpha}.
        \end{equation}
        Therefore no constant strictly smaller than $K_{r,\alpha}$ can replace this constant in \eqref{PittwithmatricesBinv}, which proves the sharpness. 
        Observe that such a sequence $f^{(k)}$ does exist: by \eqref{symplDecom1} and Proposition \ref{PropIsomorphisms} $(iii)$, every $x\in\rd$ is uniquely written as $x=Vu+D^\top\xi_2$ with $u\in\bR^r$ and $\xi_2\in A(\ker(B))$. Therefore, setting $$f^{(k)}(Vu+D^\top\xi_2):=h_1^{(k)}(u)h_2(D^\top\xi_2)e^{-i\pi(V^\top B^+AVu\cdot u-2V^\top C^\top\xi_2\cdot u)},$$ with $h_2(D^\top\cdot)\in\mathcal{S}(A(\ker(B)))\setminus\{0\}$, defines $f^{(k)}\in\mathcal{S}(\rd)$, and  \eqref{defgxi2} gives $|g_{\xi_2}^{(k)}(u)|=|h_1^{(k)}(u)h_2(D^\top\xi_2)|$, with the phase factor being unimodular. 
        It is worth stressing that the extremal ratio depends on the metaplectic operator only through $r=\mathrm{rank}(B)$: the singular directions contribute the same factor to both sides and disappear from the sharp constant.
  
    \end{remark}

\subsection{Metaplectic estimates on homogeneous Sobolev spaces}\label{sec:MEHSS}
    The behaviour of metaplectic operators within Lebesgue spaces is rather rigid. By the classification in \cite{Giacchi}, non-trivial $L^p$-$L^q$ boundedness away from the unitary $L^2$ case occurs precisely at the two extreme possibilities for the upper-right block in \eqref{blockS}. If $B=O$, then $\widehat S$ is an isomorphism of $L^p(\rd)$ for every $0<p\leq\infty$. If $B\in\mathrm{GL}(n,\bR)$, then $\widehat S$ satisfies the
metaplectic Hausdorff-Young estimate
\begin{equation}\label{sob.HYmetaplectic}
    \|\widehat Sf\|_{L^{p'}}
    \leq
    |\det B|^{\frac1p-\frac12}
    \left(\frac{p^{1/p}}{(p')^{1/p'}}\right)^{n/2}
    \|f\|_{L^p},
    \qquad 1\leq p\leq2.
\end{equation}
When $0<\mathrm{rank}(B)<n$, no such boundedness between Lebesgue spaces is available apart from the unitary $L^2$-$L^2$ action. Thus the Lebesgue scale only sees metaplectic operators whose projection is either lower block triangular or free. 

\begin{remark}
    It is easy to see that if $1\leq \mathrm{rank}(B)<n$, then $\widehat S:L^p(\rd)\to L^p_{\mathrm{loc}}(\rd)$ for every $1\leq p\leq2$. This follows from \eqref{Sfghat}, after suitable changes of variables and observing that $L^{p'}_{\mathrm{loc}},L^\infty\hookrightarrow L^p_{\mathrm{loc}}$ when $1\leq p\leq2$.
\end{remark}

As discussed in the introduction, modulation spaces provide a more flexible framework to settle this theory, and boundedness results for metaplectic operators are usually stated in their context. 

Homogeneous Sobolev spaces $\dot H^\alpha$ are not modulation spaces, however the results contained in this work allow us to infer boundedness properties for metaplectic operators within homogeneous Sobolev spaces, with explicit bounds for their operator norms.

The estimates below are first proved for functions whose Fourier transforms are smooth and compactly supported away from the origin, and then are extended by a density argument. If $C\neq O$, the exponents in the Pitt estimate satisfy $0\leq\alpha<\operatorname{rank}(C)/2\leq n/2$, so both \(\dot H^\alpha\) and \(\dot H^{-\alpha}\) lie within this canonical distributional realisation.

The block governing homogeneous Sobolev estimates is $C$, rather than $B$.
Indeed, conjugation by the Fourier transform replaces $S$ with
\begin{equation}\label{sob.Sflat}
    S^\flat:=JSJ^{-1}
    =\begin{pmatrix}
        D&-C\\
        -B&A
      \end{pmatrix}.
\end{equation}
Up to a phase, $\mathcal{F}\widehat S\mathcal{F}^{-1}=\widehat{S^\flat}$, and hence
\begin{equation}\label{sob.Fconjugation}
    \big|\mathcal{F}(\widehat Sf)(\xi)\big|
    =\big|\widehat{S^\flat}\widehat f(\xi)\big|.
\end{equation}

\begin{theorem}[Norm estimates on homogeneous Sobolev spaces]\label{thm:metaplecticSobolev}
Let $\widehat S\in\Mp(n,\bR)$.
\begin{enumerate}[(i)]
    \item If $C=O$, then $\widehat S$ extends to an isomorphism of $\dot H^\alpha(\rd)$ for every $-\infty<\alpha<n/2$. More precisely,
    \begin{equation}\label{sob.CzeroIdentity}
        \|\widehat Sf\|_{\dot H^\alpha}^2
        =\int_{\rd}|D\xi|^{2\alpha}|\widehat f(\xi)|^2\rmd\xi,
    \end{equation}
    and
    \begin{equation}\label{sob.CzeroNorm}
        \|\widehat S\|_{\dot H^\alpha\to\dot H^\alpha}
        \leq\max\big\{\sigma_{\min}(D)^\alpha,\sigma_{\max}(D)^\alpha\big\}.
    \end{equation}
    In particular, if furthermore $D=I$, then $ \|\widehat Sf\|_{\dot H^\alpha}^2
        =1$. 
    
    \item If $C\neq O$, set $r=\mathrm{rank}(C)$. For every
    $0\leq\alpha<r/2$, the operator $\widehat S$ extends boundedly from
    $\dot H^\alpha(\rd)$ to $\dot H^{-\alpha}(\rd)$, and
    \begin{equation}\label{sob.PittSobolev}
        \|\widehat Sf\|_{\dot H^{-\alpha}}
        \leq
        K_{r,\alpha}^{1/2}\,
        \Vert\mathcal{P}_{\ker(C)^\perp}^{A^\top D(\ker(C))}\Vert^\alpha
        \Vert C^+\mathcal{P}_{R(C)}^{D(\ker(C))}\Vert^\alpha
        \|f\|_{\dot H^\alpha}.
    \end{equation}
\end{enumerate}
\end{theorem}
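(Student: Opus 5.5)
The plan is to conjugate by the Fourier transform and reduce everything to the Pitt-type estimates already proved for $\widehat{S^\flat}$. By \eqref{sob.Fconjugation}, for $f$ with $\widehat f\in\mathcal{C}_c^\infty(\rd\setminus\{0\})$ we have
\[
\|\widehat Sf\|_{\dot H^{\gamma}}^2=\int_{\rd}|\xi|^{2\gamma}\,|\widehat{S^\flat}\widehat f(\xi)|^2\,\rmd\xi
\]
for every admissible $\gamma$. The blocks of $S^\flat$ are $(D,-C,-B,A)$, so its upper-right block is $-C$. Hence the role played by $B$ in Section \ref{sec:Pitt} is now played by $C$. Here $\ker(-C)=\ker(C)$, $R(-C)=R(C)$, $(-C)^+=-C^+$, and the singular subspaces become $A^\top D(\ker(C))$ in the domain and $D(\ker(C))$ in the range. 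The estimates are first proved on this dense class. They are then extended by density, which is legitimate in the ranges $\alpha<n/2$ and $0\le\alpha<\mathrm{rank}(C)/2\le n/2$, where $\dot H^{\pm\alpha}$ are the canonical completions.

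\emph{Case (i), $C=O$.} Then $S^\flat$ has zero upper-right block, so formula \eqref{repformulaBO} applies with $A$ replaced by $D$. This gives $|\widehat{S^\flat}g(\xi)|=|\det D|^{-1/2}|g(D^{-1}\xi)|$. Substituting $\xi=D\eta$ yields exactly \eqref{sob.CzeroIdentity}, namely $\int|D\eta|^{2\alpha}|\widehat f(\eta)|^2\rmd\eta$. This is the computation \eqref{eqn:B=O_intermediate_step} with $p=2$. The bound $|D\eta|^{2\alpha}\le\max\{\sigma_{\min}(D)^{2\alpha},\sigma_{\max}(D)^{2\alpha}\}|\eta|^{2\alpha}$, whose form depends on the sign of $\alpha$, gives \eqref{sob.CzeroNorm}. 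Since $D$ is invertible, applying the same argument to $\widehat S^{-1}$, whose projection also has $C=O$, shows that $\widehat S$ is an isomorphism. For $D=I$ the identity shows that $\widehat S$ is an isometry of $\dot H^\alpha$; I read the displayed "$=1$" as this statement.

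\emph{Case (ii), $C\neq O$.} Apply the radial Pitt inequality with $p=q=2$ and $\beta=\alpha$ to $\widehat{S^\flat}$ acting on $g=\widehat f$. The left-hand side $\int|\xi|^{-2\alpha}|\widehat{S^\flat}\widehat f|^2$ is $\|\widehat Sf\|_{\dot H^{-\alpha}}^2$. The right-hand side $\int|x|^{2\alpha}|\widehat f(x)|^2$ is $\|f\|_{\dot H^\alpha}^2$.

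If $r=\mathrm{rank}(C)<n$, I would use the directional Proposition \ref{intro.thm:Pittpq2}, which has sharp constant $K_{r,\alpha}$ for $0\le\alpha<r/2$. I would then combine it with \eqref{estgg1} and \eqref{estgg2}, as in Remark \ref{rem:Step344}. This produces \eqref{eqn53} with $p=q=2$, so the factor $\mu^{1-2/q}=1$ drops out. The constant is then $K_{r,\alpha}\|\mathcal{P}_{\ker(C)^\perp}^{A^\top D(\ker(C))}\|^{2\alpha}\|C^+\mathcal{P}_{R(C)}^{D(\ker(C))}\|^{2\alpha}$ at the squared level. Taking square roots gives \eqref{sob.PittSobolev}.

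If $r=n$, the projections are the identity and $\mathcal{P}_{R(C)}=I$. The bound follows from \eqref{PittForMetaps4} with $|\det C|^{0}=1$, together with $|C^{-1}\xi|\le\|C^{-1}\|\,|\xi|$.

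The main obstacle is the bookkeeping when translating the blocks of $S^\flat$: one must check carefully that $D^{\flat\top}A^\flat(\ker B^\flat)=A^\top D(\ker C)$ and $A^\flat(\ker B^\flat)=D(\ker C)$. One must also check that the signs and phases, which the paper suppresses, do not affect moduli. A secondary point is justifying density and the identification of $\dot H^{-\alpha}$ as a space of distributions with locally integrable Fourier transform.
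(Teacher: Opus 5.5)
Your proposal is correct and follows the paper's route. You conjugate by $\mathcal F$ to pass to $\widehat{S^\flat}$ with blocks $(D,-C,-B,A)$, use the $B=O$ computation \eqref{eqn:B=O_intermediate_step} with $p=2$ and $A$ replaced by $D$ for part (i), and use the $p=q=2$ directional Pitt inequality with the norm bounds of Remark~\ref{rem:Step344} for part (ii). Your block bookkeeping, the square-root handling of $K_{r,\alpha}$, and your reading of the $D=I$ case as an isometry all agree with the paper; the separate $r=n$ case is harmless but unnecessary, since the directional inequality already covers $r=n$.
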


\begin{proof}
It follows by Theorem \ref{intro.thm:mainmetap} and Remark \ref{rem:Step344} applied to $\widehat S^\flat\widehat f=\mathcal F\widehat Sf$ with $p=q=2$. We also comment that \eqref{sob.CzeroIdentity} arises from  \eqref{eqn:B=O_intermediate_step}, this is an intermediate step in the proof and is specified for clarity, and if $D=I$ it immediately implies that $\widehat S$ is an isometry of $\dot H^\alpha(\rd)$.
\end{proof}

\begin{remark}
    Theorem \ref{intro.thm:mainmetap} ($p=q=2$) implies the boundedness of metaplectic operators within inhomogeneous (or Lebesgue-)Sobolev spaces $H^\alpha(\rd)$.
    Recall that $f\in H^\alpha(\rd)$ for $\alpha\in\bR$ if 
    \begin{equation}
        \|f\|_{H^\alpha}^2=\int_{\rd}(1+|\xi|^2)^\alpha|\widehat f(\xi)|^2\rmd\xi<\infty.
    \end{equation}
    Indeed, it follows by the trivial upper bound $|\cdot|^2\leq1+|\cdot|^2$ that if $C=O$, then $\widehat S:H^\alpha(\rd)\to H^\alpha(\rd)$ for every $\alpha\in\bR$, and $\widehat S:H^\alpha(\rd)\to H^{-\alpha}(\rd)$ for every $0\leq\alpha<\mathrm{rank}(C)/2$ otherwise. 
    In time-frequency analysis, boundedness of metaplectic operators within inhomogeneous Sobolev spaces is widely studied, because $H^\alpha(\rd)$ is a particular instance of modulation space. Instead, our Pitt's inequality is stronger and is outside the realm of time-frequency analysis.
\end{remark}

\section{Applications to Schr\"odinger equations}\label{sec:SchrodingerApplications}
We collect here the applications of the entropic uncertainty principle (Proposition \ref{thm:main1} or Theorem \ref{thm:main2}) and of the boundedness on homogeneous Sobolev spaces (Theorem \ref{thm:metaplecticSobolev}) to the quadratic Schr\"odinger evolutions considered in this paper. 
For each example we present two propositions: the first providing the corresponding entropic uncertainty relation and the second recording the boundedness of the propagator within homogeneous Sobolev spaces.
Explicitly, the entropic uncertainty principles are contained in Propositions \ref{prop:freeParticleEntropy}, \ref{prop:harmonicEntropy}, \ref{prop:magneticEntropy} and \ref{prop:anisotropicEntropy}, and they follow by applying Proposition \ref{thm:main1} (for the first three listed propositions) or Theorem \ref{thm:main2} (in the final instance) to the corresponding equations. Moreover, the boundedness results for the propagators are contained in Propositions \ref{prop:freeParticleSobolev}, \ref{prop:harmonicSobolev}, \ref{prop:magneticSobolev} and \ref{prop:anisotropicSobolev}, and they are all direct consequences of Theorem \ref{thm:metaplecticSobolev}. We thereby omit the details for the sake of brevity, except for the proof of Proposition \ref{prop:anisotropicEntropy}, which is somewhat more involved.

\subsection{The free particle}\label{subsec:freeParticle}
The first prototypical example of Schr\"odinger evolution is the free particle equation, where the Hamiltonian of \eqref{Schro} is ${a^\mathrm{w}(x,\mathrm{D})}=-\frac{1}{8\pi^2}\Delta$. The Hamiltonian flow can be computed explicitly, see for example \cite[Chapter 15]{DeGosson} with $\hbar=1/2\pi$,
\begin{equation}\label{defStfp}
    S_t=\begin{pmatrix}
        I & tI\\
        O & I
    \end{pmatrix}.
\end{equation}
The corresponding propagator is 
\begin{equation}\label{fpp}
    e^{-2\pi it{a^\mathrm{w}(x,\mathrm{D})}}u_0(x)=\mathcal{F}^{-1}(\Phi_{-tI}\widehat{u_0})(x)=\int_{\rd}e^{-i\pi t|\xi|^2}\widehat{u_0}(\xi)e^{2\pi i\xi\cdot x}\mathrm{d}\xi,
\end{equation}
where we recall that $\Phi_{-tI}(y)=e^{-i\pi t|y|^2}$.

\begin{proposition}\label{prop:freeParticleEntropy}
Let $u_0\in\mathcal{S}(\rd)$ satisfy $\|u_0\|_2=1$. For every $t\neq0$,
\begin{equation}
H[|u_0|^2]+H[|u(t,\cdot)|^2]\geq \frac{n}{2}\left(1-\ln\Big(\frac{2}{|t|}\Big)\right).
\end{equation}
\end{proposition}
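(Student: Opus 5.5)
The plan is to apply Proposition \ref{thm:main1} $(ii)$ directly to the propagator $\widehat S_t=e^{-2\pi i t a^{\mathrm w}(x,\mathrm D)}$, whose projection is the matrix $S_t$ in \eqref{defStfp}. Reading off the blocks, we have $A=I$, $B=tI$, $C=O$, $D=I$. For $t\neq0$ the block $B=tI$ is invertible, so $S_t$ is free and case $(ii)$ of Proposition \ref{thm:main1} applies to $f=u_0$, $\|u_0\|_2=1$, with $\widehat S_tu_0=u(t,\cdot)$ by \eqref{propagatorSchro}.

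The only computation is the constant. We have $|\det(B)|=|t|^n$, hence $|\det(B)|^{-1/n}=|t|^{-1}$. Then \eqref{entrop2-270526} gives
\[
H[|u_0|^2]+H[|u(t,\cdot)|^2]\geq \frac n2\Big(1-\ln\big(2|t|^{-1}\big)\Big)=\frac n2\Big(1-\ln\frac{2}{|t|}\Big),
\]
which is the claim.

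The phase factors are harmless, since the entropy only depends on $|u(t,\cdot)|^2$. The ambiguity of $\widehat S_t$ up to sign, and the phase convention $i^{-n/2}$ in $\mathcal F$, do not affect the left-hand side.

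The only point deserving care is that the propagator \eqref{fpp} indeed has projection \eqref{defStfp} under the paper's conventions, in particular that $B=tI$ and not $-tI$. This does not matter, however, because only $|\det B|$ enters. As a sanity check, one can verify it directly: writing $\mathcal F^{-1}\Phi_{-tI}\mathcal F$ via \eqref{integralSffree} gives the kernel modulus $|t|^{-n/2}$, consistent with $|\det B|^{-1/2}$. There is no genuine obstacle.
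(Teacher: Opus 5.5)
Your proposal is correct and matches the paper's argument: the paper obtains this proposition by applying Proposition~\ref{thm:main1}\,$(ii)$ to the free-particle flow \eqref{defStfp}, where $B=tI$ is invertible for $t\neq0$. Your constant computation $|\det(B)|^{-1/n}=|t|^{-1}$ is right, and you are also right that the sign and phase conventions do not affect the bound.
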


\begin{proposition}\label{prop:freeParticleSobolev}
For every $t\in\bR$ and every $0\leq\alpha<n/2$, 
\begin{equation}
\big\|e^{-2\pi it{a^\mathrm{w}(x,\mathrm{D})}}\big\|_{\dot H^\alpha\to\dot H^\alpha}= 1.
\end{equation}
\end{proposition}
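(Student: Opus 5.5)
The plan is to apply Theorem \ref{thm:metaplecticSobolev} (i) to the propagator $\widehat S_t=e^{-2\pi it a^{\mathrm w}(x,\mathrm D)}$, whose projection is the matrix $S_t$ in \eqref{defStfp}. Its blocks are $A=I$, $B=tI$, $C=O$, $D=I$. Since $C=O$, we are in case (i) of Theorem \ref{thm:metaplecticSobolev}, which holds for every $\alpha<n/2$ and in particular on the range $0\leq\alpha<n/2$.

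The key step is the identity \eqref{sob.CzeroIdentity} with $D=I$, which gives
\begin{equation}
\|\widehat S_tf\|_{\dot H^\alpha}^2=\int_{\rd}|\xi|^{2\alpha}|\widehat f(\xi)|^2\rmd\xi=\|f\|_{\dot H^\alpha}^2 .
\end{equation}
So $\widehat S_t$ is an isometry of $\dot H^\alpha$, and the operator norm equals $1$, not merely $\leq 1$. Note that this does not say $\|\widehat S_tf\|_{\dot H^\alpha}^2=1$; it says $\widehat S_t$ preserves the $\dot H^\alpha$ norm.

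As a sanity check independent of the metaplectic machinery, \eqref{fpp} shows that $\mathcal F(\widehat S_t u_0)=\Phi_{-tI}\widehat{u_0}$. The chirp is unimodular, so the weighted $L^2$ norm of the Fourier transform is unchanged. This confirms the isometry for Schwartz data.

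The only point needing care is that the isometry extends from the dense class of Schwartz functions with Fourier transform compactly supported away from the origin to all of $\dot H^\alpha$. The extended operator is then bounded, and since it is isometric on a dense set, its norm is exactly $1$. For $t=0$ the operator is the identity, so the claim is trivial. I do not expect any step to be a genuine obstacle.
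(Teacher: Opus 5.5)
Your proposal is correct and follows the paper's route. The paper gives no separate argument and treats this proposition as a direct consequence of Theorem~\ref{thm:metaplecticSobolev}\,(i): the free-particle flow $S_t$ has $C=O$ and $D=I$, so \eqref{sob.CzeroIdentity} shows the propagator is an isometry of $\dot H^\alpha$, and its operator norm is therefore exactly $1$. You are also right that the theorem's phrase ``$\|\widehat Sf\|_{\dot H^\alpha}^2=1$'' should be read as norm preservation, which is what gives the equality of the operator norm rather than merely an upper bound.
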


\subsection{The quantum harmonic oscillator}\label{subsec:harmonicOscillator}
The quantum harmonic oscillator corresponds to \eqref{Schro} with the choice ${a^\mathrm{w}(x,\mathrm{D})}=-\frac{1}{8\pi^2}\Delta+\frac{1}{2}|x|^2$. The Hamiltonian flow is the rotation matrix
\begin{equation}\label{HamflowH}
    S_t=\begin{pmatrix}
        \cos(t)I & \sin(t)I\\
        -\sin(t)I & \cos(t)I
    \end{pmatrix},
\end{equation}
and the corresponding evolution is expressed, up to a phase, in terms of the {\it fractional Fourier transform}
\begin{equation}\label{propHarmonOsc2}
    e^{-2\pi it{a^\mathrm{w}(x,\mathrm{D})}}u_0(x)=\mathcal{F}_tu_0(x)=|\sin(t)|^{-n/2}e^{i\pi \cot(t)|x|^2}\int_{\rd}u_0(y)e^{i\pi \cot(t)|y|^2}e^{-2\pi ix\cdot y/\sin(t)}\mathrm{d}y,
\end{equation}
for $t\neq k\pi$, $k\in\mathbb{Z}$. For $t=k\pi$, the propagator is, up to the metaplectic phase, either the identity or the flip operator $u_0\mapsto u_0(-\cdot)$.

\begin{proposition}\label{prop:harmonicEntropy}
Let $u_0\in\mathcal{S}(\rd)$ satisfy $\|u_0\|_2=1$. If $\sin(t)\neq0$, then
\begin{equation}
H[|u_0|^2]+H[|u(t,\cdot)|^2] \geq \frac{n}{2}\left(1-\ln\Big(\frac 2{|\sin(t)|}\Big)\right).
\end{equation}
\end{proposition}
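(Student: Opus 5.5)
This follows from Proposition \ref{thm:main1}$(ii)$, once we check that the harmonic-oscillator propagator has an invertible $B$-block. By \eqref{HamflowH}, the propagator $e^{-2\pi it a^{\mathrm w}(x,\mathrm D)}$ is, up to a phase, the metaplectic operator $\widehat S_t$ with projection $S_t$. Its blocks are $A=D=\cos(t)I$, $B=\sin(t)I$ and $C=-\sin(t)I$. When $\sin(t)\neq0$ we have $B\in\mathrm{GL}(n,\bR)$, so $S_t$ is free, and
\[
|\det(B)|=|\sin(t)|^n,\qquad |\det(B)|^{-1/n}=|\sin(t)|^{-1}.
\]
The phase factors are unimodular and do not affect $|\widehat S_tu_0|$. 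Hence $H[|u(t,\cdot)|^2]=H[|\widehat S_tu_0|^2]$, because the entropy depends only on the modulus.

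Next we apply \eqref{entrop2-270526} with $f=u_0$, which is allowed since $\|u_0\|_2=1$. This gives
\[
H[|u_0|^2]+H[|u(t,\cdot)|^2]\geq \frac n2\Big(1-\ln\big(2|\det(B)|^{-1/n}\big)\Big)=\frac n2\Big(1-\ln\frac{2}{|\sin(t)|}\Big),
\]
which is exactly the claimed bound.

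The only step that needs any care is matching the sign and phase conventions. Specifically, one must confirm that \eqref{propHarmonOsc2} is the operator of \eqref{integralSffree} with $B=\sin(t)I$ (so that $\mu_{S_t}=|\sin t|^{-n/2}$), and not some variant with the inverse or the transpose. This is routine: in \eqref{propHarmonOsc2} the prefactor $|\sin t|^{-n/2}$, the chirps $e^{i\pi\cot(t)|\cdot|^2}$ (which correspond to $DB^{-1}=B^{-1}A=\cot(t)I$) and the kernel $e^{-2\pi ix\cdot y/\sin t}$ match \eqref{integralSffree} term by term. Alternatively, the proof of \eqref{entrop2-270526} goes through Remark \ref{remarkgBinvertible}, and there only $|\det B|$ enters the constant. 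Either way the constant is determined by $|\det B|$ alone, so no further computation is needed.
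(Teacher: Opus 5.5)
Your proposal is correct and matches the paper's argument: the paper states that this proposition follows by applying Proposition \ref{thm:main1}$(ii)$ to $S_t$, with $B=\sin(t)I$ and hence $|\det(B)|^{-1/n}=|\sin(t)|^{-1}$, exactly as you do. Your extra check that \eqref{propHarmonOsc2} agrees with \eqref{integralSffree} is accurate but not needed, since only $|\det(B)|$ enters the constant.
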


\begin{proposition}\label{prop:harmonicSobolev}
If $\sin(t)=0$, then for every $\alpha<n/2$,
\begin{equation}
\big\|e^{-2\pi it{a^\mathrm{w}(x,\mathrm{D})}}\big\|_{\dot H^\alpha\to\dot H^\alpha}= 1.
\end{equation}
Otherwise, for every $0\leq\alpha<n/2$,
\begin{equation}
\big\|e^{-2\pi it{a^\mathrm{w}(x,\mathrm{D})}}\big\|_{\dot H^\alpha\to\dot H^{-\alpha}}\leq K_{n,\alpha}^{1/2}|\sin(t)|^{-\alpha}.
\end{equation}
\end{proposition}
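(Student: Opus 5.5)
The plan is to specialise Theorem~\ref{thm:metaplecticSobolev} to the Hamiltonian flow \eqref{HamflowH}. The blocks are $A=D=\cos(t)I$, $B=\sin(t)I$ and $C=-\sin(t)I$. The dichotomy in the statement is exactly the dichotomy $C=O$ versus $C\neq O$ of that theorem, since $C=O$ if and only if $\sin(t)=0$.

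\emph{Case $\sin(t)=0$.} Here $C=O$ and $D=\cos(t)I=\pm I$. I will invoke Theorem~\ref{thm:metaplecticSobolev}(i), specifically the identity \eqref{sob.CzeroIdentity}, valid for every $\alpha<n/2$. It gives
\[
\|e^{-2\pi ita^{\mathrm w}(x,\mathrm D)}f\|_{\dot H^\alpha}^2=\int_{\rd}|\pm\xi|^{2\alpha}|\widehat f(\xi)|^2\rmd\xi=\|f\|_{\dot H^\alpha}^2 .
\]
So the propagator is an isometry of $\dot H^\alpha(\rd)$, and its operator norm equals $1$ rather than merely being bounded by $1$. This is consistent with the explicit description of the propagator as the identity or the flip $u_0\mapsto u_0(-\cdot)$, up to a unimodular phase; the phase plays no role in the norms.

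\emph{Case $\sin(t)\neq0$.} Here $C=-\sin(t)I$ is invertible, so $r=\mathrm{rank}(C)=n$, and Theorem~\ref{thm:metaplecticSobolev}(ii) applies for $0\leq\alpha<n/2$. Since $\ker(C)=\{0\}$, all the oblique projections in \eqref{sob.PittSobolev} trivialise:
\begin{itemize}
\item $\mathcal{P}_{\ker(C)^\perp}^{A^\top D(\ker(C))}=I$, which has spectral norm $1$;
\item $\mathcal{P}_{R(C)}^{D(\ker(C))}=I$, so $C^+\mathcal{P}_{R(C)}^{D(\ker(C))}=C^{-1}=-\sin(t)^{-1}I$, which has spectral norm $|\sin(t)|^{-1}$.
\end{itemize}
Substituting into \eqref{sob.PittSobolev} yields the bound $K_{n,\alpha}^{1/2}|\sin(t)|^{-\alpha}$.

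As a cross-check, the same bound can be read off directly from the conjugation \eqref{sob.Fconjugation}. The upper-right block of $S^\flat$ is $-C=\sin(t)I$, which is free. Apply the $r=n$ constant of Remark~\ref{rem:Step344} with $p=q=2$, where the factor $|\det|^{1/q-1/2}$ equals $1$, $\sigma_{\min}=|\sin(t)|$, and $K$ is replaced by the sharp $K_{n,\alpha}$. This again gives the constant $K_{n,\alpha}^{1/2}|\sin(t)|^{-\alpha}$.

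The only delicate point is domain bookkeeping rather than computation. The estimates are first proved on functions whose Fourier transforms are smooth and compactly supported away from the origin, then extended by density, as explained before Theorem~\ref{thm:metaplecticSobolev}. For $\sin(t)=0$ and $\alpha<0$, one must check that the isometry on this dense class extends to $\dot H^\alpha$; I expect this to follow directly from the density remark preceding Theorem~\ref{thm:metaplecticSobolev}.
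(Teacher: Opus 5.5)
Your proposal is correct and follows the paper's route. The paper states this proposition as a direct consequence of Theorem \ref{thm:metaplecticSobolev}, and its worked analogue for the anisotropic oscillator (Proposition \ref{prop:anisotropicSobolev}) proceeds as you do: it uses \eqref{sob.CzeroIdentity} with $D=\pm I$ when $\sin(t)=0$, and otherwise plugs the projection norms $1$ and $\|C^+\|=|\sin(t)|^{-1}$ into \eqref{sob.PittSobolev}.

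Your cross-check via Remark \ref{rem:Step344} is also consistent, provided the sharp replacement there is read as $K_{n,\alpha}^{1/2}$ for the unsquared inequality, which is what you did.
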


\subsection{The uniform magnetic potential}\label{subsec:UMP}
We next consider the Schr\"odinger equation with uniform magnetic potential, corresponding to
\begin{equation}\label{HamiltonUMP}
    {a^\mathrm{w}(x,\mathrm{D})}=-\frac{1}{2m}\left(\frac{1}{2\pi}\nabla-im\omega Mx\right)^2, \quad m>0,\; \omega\neq0, \; M\in\bR^{n\times n}\setminus\{0\}, \; M^\top=-M,\; M^\top M=I.
\end{equation}
Such a matrix $M$ exists if and only if $n$ is even. An example in this case is $M=J$. 
Associated with \eqref{HamiltonUMP} is the quadratic form $q(z)=\frac1{2m}|\xi-m\omega Mx|^2$, with Hamilton matrix
\begin{equation}
    X=\begin{pmatrix}
        -\omega M & I/m\\
        -m\omega^2 I & -\omega M
    \end{pmatrix}, \quad X^{2k+1}=(-1)^k(2\omega)^{2k}X, \quad X^{2k}=(-1)^{k-1}(2\omega)^{2(k-1)}X^2, \quad k\geq1,
\end{equation}
see e.g. \cite{Knutsen}. The corresponding Hamiltonian flow is
\begin{equation}\label{StUMP}\begin{split}
    S_t&=e^{tX}\\
    &=\frac 1 2 \begin{pmatrix}
            \big(1+\cos(2\omega t)\big)I-\sin(2\omega t)M & \frac{1}{m\omega}\big((\cos(2\omega t)-1)M+\sin(2\omega t)I\big)\\
            -m\omega\big((\cos(2\omega t)-1)M+\sin(2\omega t)I\big) & \big(1+\cos(2\omega t)\big)I-\sin(2\omega t)M
        \end{pmatrix}\\
    &=\begin{pmatrix}
           \cos(\omega t)\big(\cos(\omega t)I-\sin(\omega t)M\big) & \frac{\sin(\omega t)}{m\omega}\big(-\sin(\omega t)M+\cos(\omega t)I\big)\\
           m\omega\sin(\omega t)\big(\sin(\omega t)M-\cos(\omega t)I\big) & \cos(\omega t)\big(\cos(\omega t)I-\sin(\omega t)M\big)
        \end{pmatrix}\\
    &=:\begin{pmatrix}
        \mathcal{A}(t)& \mathcal{B}(t) \\
        \mathcal{C}(t)&\mathcal{D}(t)
        \end{pmatrix}.
\end{split}\end{equation}
A straightforward computation using the defining properties of $M$ gives
\begin{equation}\label{BTBminus1}
    \mathcal{B}(t)^\top \mathcal{B}(t)=\frac{\sin^2(\omega t)}{m^2\omega^2}I,
\end{equation}
so $\mathcal B(t)$ is invertible precisely when $t\neq k\pi/\omega$ for $k \in \mathbb{Z}$. For these times, \eqref{integralSffree} gives
\begin{equation}
    u(t,x)=\Big|\frac{m\omega}{\sin(\omega t)}\Big|^{n/2}e^{ i\pi m\omega\cot(\omega t)|x|^2 }\int_{\bR^n}u_0(y)e^{i \pi m\omega\cot(\omega t)|y|^2 }e^{-2\pi i m\omega\big(M+\cot(\omega t)I\big) x\cdot y}\mathrm{d}y.
\end{equation}

\begin{proposition}\label{prop:magneticEntropy}
Let $u_0\in\mathcal{S}(\rd)$ satisfy $\|u_0\|_2=1$. If $\sin(\omega t)\neq0$, then
\begin{equation}
    H[|u_0|^2]+H[|u(t,\cdot)|^2]\geq \frac{n}{2}\Big(1-\ln\Big(2\Big|\frac{m\omega}{\sin(\omega t)}\Big|\Big)\Big).
\end{equation}
\end{proposition}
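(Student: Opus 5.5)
The plan is to read the estimate off Proposition \ref{thm:main1}\,(ii), which for an invertible upper-right block $B$ and $\|f\|_2=1$ gives
\[
H[|f|^2]+H[|\widehat Sf|^2]\geq \frac n2\bigl(1-\ln(2|\det(B)|^{-1/n})\bigr).
\]
We apply it with $\widehat S=\widehat S_t=e^{-2\pi i t a^{\mathrm w}(x,\mathrm D)}$, whose projection is the flow $S_t$ in \eqref{StUMP}, and with $f=u_0$, so that $\widehat S_tu_0=u(t,\cdot)$. Since $\widehat S_t$ is unitary, $\|u(t,\cdot)\|_2=1$ automatically. The remaining phase factors of the metaplectic operator are unimodular and do not affect $|\widehat S_tu_0|$, so they play no role in the entropies.

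The only computation is $|\det\mathcal B(t)|$. By \eqref{BTBminus1}, $\mathcal B(t)^\top\mathcal B(t)=\frac{\sin^2(\omega t)}{m^2\omega^2}I$. Taking determinants gives $\det(\mathcal B(t))^2=(\sin(\omega t)/(m\omega))^{2n}$, hence $|\det\mathcal B(t)|=|\sin(\omega t)/(m\omega)|^n$. This is nonzero exactly when $\sin(\omega t)\neq0$, which is the hypothesis. Then
\[
|\det\mathcal B(t)|^{-1/n}=\Bigl|\frac{m\omega}{\sin(\omega t)}\Bigr|,
\]
and substituting this into the lower bound of Proposition \ref{thm:main1}\,(ii) gives exactly the claimed inequality.

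There is no genuine obstacle. The only points needing care are confirming that \eqref{BTBminus1} holds; it follows from $M^\top=-M$ and $M^\top M=I$, which give $(\cos(\omega t)I-\sin(\omega t)M)^\top(\cos(\omega t)I-\sin(\omega t)M)=I$. We also note that the constant in Proposition \ref{thm:main1}\,(ii) is written with $|\det(B)|^{-1/n}$ inside the logarithm, matching the form of the statement.
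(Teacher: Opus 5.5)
Your proposal is correct and follows the paper's route exactly: the paper derives this proposition by applying Proposition \ref{thm:main1}\,(ii) to the propagator $\widehat S_t$, using $|\det\mathcal B(t)|=|\sin(\omega t)/(m\omega)|^n$ from \eqref{BTBminus1}. Your check of \eqref{BTBminus1} via $M^\top=-M$ and $M^\top M=I$ supplies a detail the paper leaves out.
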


\begin{proposition}\label{prop:magneticSobolev}
If $\sin(\omega t)=0$, then for every $\alpha<n/2$,
\begin{equation}
\big\|e^{-2\pi it{a^\mathrm{w}(x,\mathrm{D})}}\big\|_{\dot H^\alpha\to\dot H^\alpha}= 1.
\end{equation}
Otherwise, for every $0\leq\alpha<n/2$,
\begin{equation}
    \big\|e^{-2\pi it{a^\mathrm{w}(x,\mathrm{D})}}\big\|_{\dot H^\alpha\to\dot H^{-\alpha}}\leq K_{n,\alpha}^{1/2}|m\omega\sin(\omega t)|^{-\alpha}.
\end{equation}
\end{proposition}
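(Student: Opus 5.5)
We need to prove Proposition \ref{prop:magneticSobolev}. The plan is a direct application of Theorem \ref{thm:metaplecticSobolev} to $\widehat S_t=e^{-2\pi it a^{\mathrm w}(x,\mathrm D)}$, whose projection $S_t$ is given in \eqref{StUMP}. The relevant block is $\mathcal C(t)=m\omega\sin(\omega t)\big(\sin(\omega t)M-\cos(\omega t)I\big)$, together with $\mathcal D(t)=\cos(\omega t)(\cos(\omega t)I-\sin(\omega t)M)$.

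\textbf{Case $\sin(\omega t)=0$.} Here $\mathcal C(t)=O$ and $\cos(\omega t)=\pm1$, so $\mathcal D(t)=I$. Theorem \ref{thm:metaplecticSobolev}(i) then gives $\|\widehat S_tf\|_{\dot H^\alpha}=\|f\|_{\dot H^\alpha}$ via \eqref{sob.CzeroIdentity}. Hence the operator norm equals $1$ for every $\alpha<n/2$.

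\textbf{Case $\sin(\omega t)\neq0$.} First compute $\mathcal C(t)^\top\mathcal C(t)$ using $M^\top=-M$ and $M^\top M=I$; equivalently $M^2=-I$. Writing $N=\sin(\omega t)M-\cos(\omega t)I$, we get
\[N^\top N=\sin^2(\omega t)I+\cos^2(\omega t)I+\sin(\omega t)\cos(\omega t)(M+M^\top)=I.\]
Therefore $\mathcal C(t)^\top\mathcal C(t)=m^2\omega^2\sin^2(\omega t)I$. This has three consequences:
\begin{enumerate}[(i)]
\item $\mathcal C(t)$ is invertible, so $r=\mathrm{rank}(\mathcal C)=n$ and $\ker(\mathcal C)=\{0\}$.
\item The projection $\mathcal P_{\ker(\mathcal C)^\perp}^{A^\top D(\ker\mathcal C)}$ is the identity, with norm $1$.
\item $\mathcal C^+\mathcal P_{R(\mathcal C)}^{D(\ker\mathcal C)}=\mathcal C^{-1}$. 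Since $\mathcal C$ is a scalar multiple of an orthogonal matrix, all its singular values equal $|m\omega\sin(\omega t)|$, so $\|\mathcal C^{-1}\|=|m\omega\sin(\omega t)|^{-1}$.
\end{enumerate}
Inserting these into \eqref{sob.PittSobolev} gives, for $0\leq\alpha<n/2$, the bound $K_{n,\alpha}^{1/2}|m\omega\sin(\omega t)|^{-\alpha}$.

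The only nontrivial step is checking that $N$ is orthogonal. This is exactly the computation behind \eqref{BTBminus1} applied to the $\mathcal C$ block instead of the $\mathcal B$ block. Because the singular values are all equal, the constant from Remark \ref{rem:Step344} for $r=n$ with $p=q=2$, namely $|\det|^{0}\sigma_{\min}^{-\alpha}$, reduces cleanly to the stated form.
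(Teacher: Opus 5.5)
Your proposal is correct and follows the same route as the paper, which omits the details and calls the result a direct consequence of Theorem \ref{thm:metaplecticSobolev}; you apply that theorem with $\mathcal C(t)=O$, $\mathcal D(t)=I$ when $\sin(\omega t)=0$, and with $\mathcal C(t)^\top\mathcal C(t)=m^2\omega^2\sin^2(\omega t)I$ otherwise. (The cross term in your computation of $N^\top N$ should carry a minus sign, $-\sin(\omega t)\cos(\omega t)(M+M^\top)$, but this is harmless since $M^\top=-M$ makes it vanish.)
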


\subsection{The anisotropic harmonic oscillator}\label{subsubsec:AHO}
In the examples above, the top-right block of the Hamiltonian flow is either $O$ or invertible. We now consider a case in which it can be singular and non-zero. For simplicity, we work in dimension $n=2$ and consider \eqref{Schro} with Hamiltonian
\begin{equation}
    {a^\mathrm{w}(x,\mathrm{D})}=-\frac{1}{8\pi^2}\partial_{x_2}^2+\frac{1}{2}x_2^2.
\end{equation}
Thus the harmonic oscillator acts only on the second variable of $u_0=u_0(x_1,x_2)$. The Hamiltonian flow is the $4\times4$ one-parameter subgroup of $\Sp(2,\bR)$
\begin{equation}\label{StAHO}
    S_t=\left(\begin{array}{cc|cc}
        1 & 0 & 0 & 0\\
        0 & \cos(t) & 0 & \sin(t)\\
        \hline
        0 & 0 & 1 & 0\\
        0 & -\sin(t) & 0 & \cos(t)
    \end{array}\right)=:\begin{pmatrix}
    \mathcal{A}(t)& \mathcal{B}(t) \\
    \mathcal{C}(t)&\mathcal{D}(t)
    \end{pmatrix},
\end{equation}
see for example \cite{CGM2025}. For $t\neq k\pi$, $k\in\mathbb Z$,
\begin{equation}
    \ker(\mathcal{B}(t))^\perp=R(\mathcal{B}(t))=\{(0,x_2):x_2\in\bR\},
\end{equation}
and the propagator is the one-parameter subgroup of fractional partial Fourier transforms
\begin{equation}\label{propHarmonosc}
    e^{-2\pi it{a^\mathrm{w}(x,\mathrm{D})}}u_0(x)=\widehat{S}_tu_0(x)=|\sin(t)|^{-1/2}e^{i\pi \cot(t)x_2^2}\int_{\bR}u_0(x_1,y_2)e^{i\pi \cot(t)y_2^2}e^{-2\pi ix_2\cdot y_2/\sin(t)}\mathrm{d}y_2.
\end{equation}

\begin{proposition}\label{prop:anisotropicEntropy}
Let $u_0\in\mathcal{S}(\bR^2)$ satisfy $\|u_0\|_2=1$. If $\sin(t)\neq0$, then
\begin{equation}\label{AnisotHO}\begin{split}
H[|u_0|^2]+H[|u(t,\cdot)|^2]-H_{S_t}[u_0]\geq\frac 12 \left(1-\ln\Big({\frac{2}{|\sin(t)|}}\Big)\right).
\end{split}\end{equation}
\end{proposition}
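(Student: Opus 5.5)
The plan is to apply Theorem \ref{thm:main2} to $\widehat S_t$ with $n=2$ and $r=\mathrm{rank}(\mathcal B(t))=1$, which holds whenever $\sin(t)\neq0$. The right-hand side of \eqref{entropyUPS} then reads
\[
\tfrac12-\ln\bigl(2^{1/2}\mu_{S_t}\bigr),
\]
so everything reduces to computing $\mu_{S_t}$ from \eqref{defmus}. After that, the corrective term $H_{S_t}[u_0]$ of Definition \ref{intro.def.corrective.term} should be identified explicitly in the anisotropic geometry.

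For the geometry at fixed $t$ with $\sin(t)\neq0$, read off from \eqref{StAHO}:
\begin{itemize}
\item $\mathcal A(t)=\mathrm{diag}(1,\cos t)$, $\mathcal B(t)=\mathrm{diag}(0,\sin t)$, $\mathcal C(t)=\mathrm{diag}(0,-\sin t)$ and $\mathcal D(t)=\mathrm{diag}(1,\cos t)$.
\item $\ker(\mathcal B(t))=\mathrm{span}\{e_1\}$ and $\ker(\mathcal B(t))^\perp=R(\mathcal B(t))=\mathrm{span}\{e_2\}$.
\item $\mathcal A(t)(\ker\mathcal B(t))=\mathrm{span}\{e_1\}$, and $\mathcal D(t)^\top\mathcal A(t)(\ker\mathcal B(t))=\mathrm{span}\{e_1\}$ as well.
\end{itemize}
All the decompositions \eqref{symplDecom1}--\eqref{symplDecom3} are therefore orthogonal coordinate splittings, and every oblique projection is a coordinate projection. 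Concretely, $\mathcal P^{\ker(B)^\perp}_{D^\top A(\ker B)}y=(y_1,0)$.

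This gives the corrective term directly:
\[
H_{S_t}[u_0]=-\int_{\bR^2}|u_0(y)|^2\ln\Bigl(\int_{\bR}|u_0(y_1,x_2)|^2\rmd x_2\Bigr)\rmd y ,
\]
which is the entropy of the first marginal, independent of $t$. For $\sigma(\mathcal B(t))$, the only nonzero singular value is $|\sin t|$. For $q_{R(B)^\perp}(A^\top)$, the Jacobian factor of $A^\top$ restricted to $R(B)^\perp=\mathrm{span}\{e_1\}$, note that $A^\top e_1=e_1$, so it equals $1$. Hence $\mu_{S_t}=|\sin t|^{-1/2}$, consistent with the prefactor in \eqref{propHarmonosc}.

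Substituting gives
\[
\tfrac12-\ln\bigl(2^{1/2}|\sin t|^{-1/2}\bigr)=\tfrac12\Bigl(1-\ln\tfrac{2}{|\sin t|}\Bigr),
\]
which is \eqref{AnisotHO}.

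The main obstacle is confirming that the quantity $q_{R(B)^\perp}(A^\top)$ from the appendix change-of-variables formulae (\eqref{CV}, \eqref{Edo1}) equals $1$ here. I would verify this directly from the integral representation \eqref{propHarmonosc} rather than through the appendix. Along $x_1$ the operator acts as the identity with no Jacobian, and along $x_2$ it is the one-dimensional fractional Fourier transform with prefactor $|\sin t|^{-1/2}$.

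As a sanity check, one can instead rerun the proof of Theorem \ref{thm:main2} by hand. Fix $x_1$, apply the one-dimensional entropic uncertainty principle \eqref{entropicUP-210526} to the normalized slice $g_{x_1}(y_2)=u_0(x_1,y_2)e^{i\pi\cot(t)y_2^2}$, use $|u(t,x_1,\xi)|=|\sin t|^{-1/2}|\widehat{g_{x_1}}(\xi/\sin t)|$ with the rescaling $\xi\mapsto\xi/\sin t$, then multiply by $\|g_{x_1}\|_2^2$ and integrate in $x_1$. This reproduces the same constant and the same marginal-entropy correction.
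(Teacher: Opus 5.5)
Your proposal is correct and matches the route the paper indicates for this proposition: the paper says it follows from Theorem \ref{thm:main2} but gives no written-out proof. You apply that theorem with $n=2$, $r=1$, get $\mu_{S_t}=|\sin t|^{-1/2}$ from \eqref{defmus} using $\sigma(\mathcal B(t))=|\sin t|$ and $q_{R(\mathcal B(t))^\perp}(\mathcal A(t)^\top)=1$, and identify $H_{S_t}[u_0]$ as the entropy of the $x_1$-marginal. The step you flag as the main obstacle is immediate from \eqref{qL}, since $q_{\mathrm{span}\{e_1\}}(\mathcal A(t)^\top)=\sqrt{e_1^\top \mathcal A(t)\mathcal A(t)^\top e_1}=1$, so you do not need to check it against \eqref{propHarmonosc}.
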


\begin{proposition}\label{prop:anisotropicSobolev}
If $\sin(t)=0$, then for every $\alpha<1$:
\begin{equation}
\big\|e^{-2\pi it{a^\mathrm{w}(x,\mathrm{D})}}\big\|_{\dot H^\alpha\to\dot H^\alpha}= 1.
\end{equation}
Otherwise, for every $0\leq\alpha<1/2$,
\begin{equation}
\big\|e^{-2\pi it{a^\mathrm{w}(x,\mathrm{D})}}\big\|_{\dot H^\alpha\to\dot H^{-\alpha}}\leq K_{1,\alpha}^{1/2}|\sin(t)|^{-\alpha}.
\end{equation}
\end{proposition}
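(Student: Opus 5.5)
We apply Theorem \ref{thm:metaplecticSobolev} to the flow \eqref{StAHO}, keeping track of the blocks $\mathcal{C}(t)=\mathrm{diag}(0,-\sin t)$ and $\mathcal{D}(t)=\mathrm{diag}(1,\cos t)$. We split according to whether $\sin(t)$ vanishes.

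\textbf{Case $\sin(t)=0$.} Here $\mathcal{C}(t)=O$ and $\mathcal{D}(t)=\mathrm{diag}(1,\pm1)$ is orthogonal. Part (i) of Theorem \ref{thm:metaplecticSobolev}, through the identity \eqref{sob.CzeroIdentity}, gives
\[
\|\widehat S_tf\|_{\dot H^\alpha}^2=\int_{\bR^2}|\mathcal{D}(t)\xi|^{2\alpha}|\widehat f(\xi)|^2\rmd\xi=\|f\|_{\dot H^\alpha}^2,
\]
since $|\mathcal{D}(t)\xi|=|\xi|$. This holds for every $\alpha<n/2=1$, so the operator is an isometry of $\dot H^\alpha$ and its norm equals $1$. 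Alternatively, one can note directly that the propagator is, up to a phase, either the identity or the reflection $x_2\mapsto-x_2$, and both preserve $|\widehat f|$ up to a reflection.

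\textbf{Case $\sin(t)\neq0$.} Now $\mathcal{C}(t)\neq O$ and $r=\mathrm{rank}(\mathcal{C}(t))=1$, so part (ii) applies for $0\le\alpha<1/2$. We need the two norms appearing in \eqref{sob.PittSobolev}.

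\emph{The projection factor.} We have $\ker(\mathcal{C})=\mathrm{span}\{e_1\}$ and $\ker(\mathcal{C})^\perp=\mathrm{span}\{e_2\}$. Moreover $\mathcal{A}^\top\mathcal{D}=\mathrm{diag}(1,\cos^2 t)$, so $\mathcal{A}^\top\mathcal{D}(\ker\mathcal{C})=\mathrm{span}\{e_1\}$. The decomposition is therefore orthogonal, and $\mathcal{P}_{\ker(\mathcal{C})^\perp}^{\mathcal{A}^\top\mathcal{D}(\ker\mathcal{C})}$ is the orthogonal projection onto $e_2$, which has norm $1$.

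\emph{The pseudo-inverse factor.} We have $R(\mathcal{C})=\mathrm{span}\{e_2\}$ and $\mathcal{D}(\ker\mathcal{C})=\mathrm{span}\{e_1\}$, so $\mathcal{P}_{R(\mathcal{C})}^{\mathcal{D}(\ker\mathcal{C})}$ is again orthogonal. Since $\mathcal{C}^+=\mathrm{diag}(0,-1/\sin t)$, the composition $\mathcal{C}^+\mathcal{P}_{R(\mathcal{C})}^{\mathcal{D}(\ker\mathcal{C})}$ has norm $|\sin t|^{-1}$.

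Substituting both values into \eqref{sob.PittSobolev} gives the bound $K_{1,\alpha}^{1/2}|\sin t|^{-\alpha}$.

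The main point to check is the conjugation step behind part (ii). Its proof applies Remark \ref{rem:Step344} to $\widehat{S^\flat}$, whose upper-right block is $-\mathcal{C}$, so the relevant projections are built from the blocks of $S^\flat$. However, every matrix involved here is diagonal, so signs and the exchange $\mathcal{A}\leftrightarrow\mathcal{D}$ do not change the subspaces or the norms. This means the computation holds regardless of the exact labelling convention.
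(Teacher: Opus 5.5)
Your proposal is correct and follows the paper's proof essentially step for step: for $\sin(t)=0$ you apply the identity \eqref{sob.CzeroIdentity} with the orthogonal block $\mathcal D(t)=\mathrm{diag}(1,\pm1)$, and for $\sin(t)\neq0$ you identify both oblique projections in \eqref{sob.PittSobolev} as the orthogonal projection onto $e_2$, which gives the factors $1$ and $|\sin t|^{-1}$ with $r=1$. Your extra check that the sign change and the $\mathcal A\leftrightarrow\mathcal D$ relabelling coming from $S^\flat$ are harmless, because all the blocks are diagonal, is a sensible detail that the paper leaves implicit.
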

\begin{proof}
The block $\mathcal C(t)$ in \eqref{StAHO} vanishes precisely when $\sin(t)=0$. In that case $\mathcal D(t)=\operatorname{diag}(1,(-1)^k)$. Even when $\mathcal D(t)\neq I$, the identity in \eqref{sob.CzeroIdentity} of \ref{thm:metaplecticSobolev}\,(i) still gives the isometry property. If $\sin(t)\neq0$, then $\operatorname{rank}(\mathcal C(t))=1$ and its unique non-zero singular value is $|\sin(t)|$. Moreover, in this case the projections $\mathcal P^{\mathcal A^\top(t) \mathcal D(t)(\ker(\mathcal C(t)))}_{\ker(\mathcal C(t))^\perp}$ and $\mathcal P^{\mathcal D(t)(\ker(\mathcal C(t)))}_{R(\mathcal C(t))}$ are the orthogonal projections onto the span of the second coordinate, whence
\begin{equation}
    \|\mathcal P^{\mathcal A^\top(t) \mathcal D(t)(\ker(\mathcal C(t)))}_{\ker(\mathcal C(t))^\perp}\|=1, \qquad \|\mathcal C(t)^+\mathcal P^{\mathcal D(t)(\ker(\mathcal C(t)))}_{R(\mathcal C(t))}\|=\|\mathcal C(t)^+\|=|\sin(t)|^{-1}.
\end{equation}
Plugging this information into \eqref{sob.PittSobolev} concludes the proof.
\end{proof}

\section{Acknowledgments and usage of AI declaration}
\noindent
\textbf{Acknowledgments.}
We thank Jonathan Bennett for many stimulating conversations and for pointing out various references in the literature at many stages of this work. The third author further expresses her gratitude toward her PhD supervisor, Jonathan Bennett, for his unwavering support and guidance and for his enthusiastic introduction to the topics that sparked this collaboration.

The first author is supported by the SNSF starting grant ``Multiresolution methods for unstructured data'' (TMSGI2 211684).
The second author is supported by his EPSRC Postdoctoral Fellowship UKRI3285 \textit{New perspectives in phase-space Analysis and Fourier restriction}. The third author is supported by the EPSRC Doctoral Training Partnership and the University of Birmingham.

\medskip
\noindent
\textbf{Useage of AI declaration.}
Claude and ChatGPT-5.6 Sol Pro were used to run several rounds of checks of typos and logical gaps in earlier versions of this manuscript, to make figures, charts, and to find references in the literature that were relevant to this paper. GPT-5.6 Sol Pro provided three mathematical inputs of significance to this manuscript:
\begin{itemize}
    \item It pointed out to us the existence of the translation-modulation mechanism present in \cite{DGT}. Motivated by this, GPT-5.6 Sol constructed the sum of wave packets $P_{T}$ in \eqref{PT-construction-14082026}, which allowed us to establish the necessary condition \eqref{eq:PFT-necessary-packet} without strict inequality in Theorem \ref{prop:PFT-necessary}.
    \item It brought to our attention the general dyadic superposition mechanism that is often very useful to rule out endpoint estimates. This mechanism was used extensively in Theorem \ref{prop:PFT-necessary}.
    \item It proposed a significant simplification of subsection \ref{sufficiency-13082026}. The original argument designed by the authors had three separate propositions divided in several cases, combined with dualisation and interpolation. GPT-5.6 Sol Pro proposed a way of unifying all that, which was fully rewritten and refined by the authors to obtain the current version.
\end{itemize}

Apart from the three points above, all other results and ideas are due to the authors.

\begin{appendix}\label{appendix}

   \section{Integration over subspaces and direct sums}\label{appendixA}
    In this section, we provide the main tools for integration of functions defined on $\rd$ over subspaces of $\rd$, and over direct sums. We will use the notation in \cite{TMO}. 
    For an $n \times n$ matrix $M$ and a linear subspace $\mathcal{L}$ of $\mathbb{R}^n$ with $\dim \mathcal{L} = r$, $q_\mathcal{L}(M)$ denotes the $r$-dimensional volume of 
\[
X = \left\{ x \in \mathbb{R}^n \;\middle|\; x = \alpha_1 M e_1 + \cdots + \alpha_\ell M e_r,\ 0 \leq \alpha_i \leq 1,\ i = 1, \dots,r \right\}
\]
spanned by the vectors $Me_1, \dots, Me_r$, with $e_1, \dots, e_r$  any orthonormal basis of $\mathcal{L}$. 

If $\dim M(\mathcal{L})= r$, then the $r$-dimensional volume of $X$ is positive, otherwise, this volume is zero. The number $q_{\mathcal{L}}(M)$ can be associated with a matrix determinant as follows: we collect the vectors $e_1, \dots, e_r$ as columns into the $n \times r$ matrix $E = (e_1| \dots| e_r)$. Assuming that $\dim M(\mathcal{L}) = r$, the matrix $ME$ has full column rank and
\begin{equation}\label{qL}
q_{\mathcal{L}}(M) = \mathrm{vol}(ME) = \sqrt{\det(E^\top M^\top M E)}.
\end{equation}
If $r=0$ we set $q_\mathcal{L}(M):=1$.
Observe that, if $\mathcal{L} = \mathbb{R}^n$ and $M$ is nonsingular, then  $q_\mathcal{L}(M) = |\det M|$.

\begin{lemma}
    Under the notation above, if $\varphi\in \mathcal{S}(\mathbb{R}^n)$,
    \begin{equation}\label{CV}
\int_\mathcal{L} \varphi(Mx)\, \mathrm{d}x = \frac{1}{q_\mathcal{L}(M)} \int_{M(\mathcal{L})} \varphi(x)\, \mathrm{d}x, \quad (\dim M(\mathcal{L}) = \dim \mathcal{L}).
\end{equation}
\end{lemma}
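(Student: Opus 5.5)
The identity \eqref{CV} is a change-of-variables formula for a linear map between $r$-dimensional subspaces. I will prove it by parametrising both subspaces isometrically and reducing to the ordinary Jacobian formula on $\bR^r$.

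First, fix an orthonormal basis $e_1,\dots,e_r$ of $\mathcal{L}$ and collect it into the $n\times r$ matrix $E=(e_1|\dots|e_r)$. By definition, the Lebesgue measure on $\mathcal{L}$ is the pushforward of Lebesgue measure on $\bR^r$ under $t\mapsto Et$, since $E^\top E=I_r$. Hence
\[
\int_{\mathcal L}\varphi(Mx)\,\rmd x=\int_{\bR^r}\varphi(MEt)\,\rmd t .
\]
Second, use the hypothesis $\dim M(\mathcal L)=r$. It says that $ME$ has full column rank, so we may take a thin QR (or polar) decomposition $ME=UR$. Here $U$ is $n\times r$ with $U^\top U=I_r$ and $R(U)=M(\mathcal L)$, and $R\in\mathrm{GL}(r,\bR)$. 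The columns of $U$ form an orthonormal basis of $M(\mathcal L)$, so as before
\[
\int_{M(\mathcal L)}\varphi(y)\,\rmd y=\int_{\bR^r}\varphi(Us)\,\rmd s .
\]
Third, substitute $s=Rt$ in the $\bR^r$ integral. The classical linear change of variables gives
\[
\int_{\bR^r}\varphi(URt)\,\rmd t=|\det R|^{-1}\int_{\bR^r}\varphi(Us)\,\rmd s .
\]
It remains to identify $|\det R|$. From $E^\top M^\top ME=R^\top U^\top UR=R^\top R$ we get $|\det R|=\sqrt{\det(E^\top M^\top ME)}=q_{\mathcal L}(M)$ by \eqref{qL}, which yields \eqref{CV}.

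The only point requiring care is that $q_{\mathcal L}(M)$, and the measure on $\mathcal L$, do not depend on the chosen orthonormal basis. Changing the basis replaces $E$ by $EO$ with $O$ orthogonal. This multiplies the Gram determinant by $\det(O)^2=1$ and leaves $\rmd t$ invariant, so the definition is consistent. Integrability is not an issue: $\varphi\in\mathcal S(\rd)$ restricted to a subspace is Schwartz there, and the same argument applies verbatim to nonnegative measurable or integrable $\varphi$. I do not expect a genuine obstacle; the main step is recognising the Gram-determinant identity $|\det R|^2=\det(E^\top M^\top ME)$.
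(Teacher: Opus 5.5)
The paper does not prove this lemma itself. It is stated in Appendix~\ref{appendixA} as a known fact, taken together with the notation $q_{\mathcal L}(M)$ from ter Morsche and Oonincx~\cite{TMO}, with \eqref{qL} as the working formula. So there is no proof in the paper to compare against.

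Your argument is correct and self-contained. You parametrise $\mathcal L$ and $M(\mathcal L)$ by matrices with orthonormal columns, write $ME=UR$, and obtain $|\det R|=\sqrt{\det(E^\top M^\top ME)}$ from $R^\top R=E^\top M^\top ME$; this is exactly the right mechanism. It is close in spirit to the paper's proof of Lemma~\ref{Lemma-COVar}, where a Jacobian is computed by multiplying the parametrisation matrix by an orthogonal matrix and reading off a block determinant.

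Your factorisation also shows that the geometric definition of $q_{\mathcal L}(M)$, the volume of the parallelepiped spanned by $Me_1,\dots,Me_r$, agrees with \eqref{qL}. Indeed, that parallelepiped is the isometric image under $U$ of the one spanned by the columns of $R$.

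Keep explicit your remark that only the linear change of variables is used. The paper applies \eqref{CV} to integrands that are not Schwartz, such as $|\eta|^{-\beta p}|\widehat{g_{\xi_2}}(\eta)|^p$ or $\ln|\widehat Sf|\,|\widehat Sf|^2$. It also applies it to non-square maps such as $\eta\mapsto BV\eta$, for which your formula gives $q=\sigma(B)$. Your proof covers all of these cases verbatim for nonnegative measurable or integrable $\varphi$, which is more than the Schwartz-class statement provides.

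One cosmetic point: rename the triangular factor, e.g.\ $ME=UT$. In the paper $R(\cdot)$ denotes ranges, and you use both meanings in the same sentence.
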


We next discuss changes of variables adapted to direct sum decompositions.
\begin{lemma}\label{Lemma-COVar}
    Let $\mathcal{L}\subseteq\rd$ be a subspace of $\rd$ of dimension $r>0$. Let $E=(E_1|E_2)$, be such that the columns of $E_1$ form an orthonormal basis of $\mathcal{L}^\perp$ and those of $E_2$ form an orthonormal basis of $\mathcal{L}$. Assume that $\rd=\mathcal{L}^\perp\oplus M(\mathcal{L})$. Then, the modulus of the determinant of the Jacobian of 
    \[\Psi:(u,v)\in\mathbb{R}^{n-r}\times \mathbb{R}^{r}\longmapsto E_1u+ME_2v\in\mathcal{L}^\perp\oplus M(\mathcal{L})
    \]
    is $|\det\nabla_{(u,v)}\Psi|=|\det(E_2^\top ME_2)|$.
\end{lemma}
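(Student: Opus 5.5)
The plan is to reduce the Jacobian computation to a block-triangular determinant by writing $\Psi$ in coordinates adapted to the orthogonal decomposition $\rd=\mathcal{L}^\perp\oplus\mathcal{L}$. Since $\Psi$ is linear, $\nabla_{(u,v)}\Psi$ is the constant $n\times n$ matrix $(E_1\,|\,ME_2)$. The hypothesis $\rd=\mathcal{L}^\perp\oplus M(\mathcal{L})$ ensures this matrix is invertible, which gives a well-defined change of variables; the task is then to identify $|\det(E_1\,|\,ME_2)|$.

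First, because $E=(E_1|E_2)$ is orthogonal, $|\det E^\top|=1$. Hence
\begin{equation*}
|\det(E_1\,|\,ME_2)|=|\det\big(E^\top(E_1\,|\,ME_2)\big)|=\left|\det\begin{pmatrix} E_1^\top E_1 & E_1^\top ME_2\\ E_2^\top E_1 & E_2^\top ME_2\end{pmatrix}\right|.
\end{equation*}
Next, I use orthonormality: $E_1^\top E_1=I_{n-r}$ and $E_2^\top E_1=O$ because $\mathcal{L}^\perp\perp\mathcal{L}$. The matrix is therefore block upper triangular, with diagonal blocks $I_{n-r}$ and $E_2^\top ME_2$. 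Its determinant is $\det(E_2^\top ME_2)$, which gives the claim.

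As a consistency check, note that $E_2^\top ME_2$ is invertible exactly when $\mathcal{P}_{\mathcal{L}}$ restricted to $M(\mathcal{L})$ is injective, that is, when $M(\mathcal{L})\cap\mathcal{L}^\perp=\{0\}$ and $\dim M(\mathcal{L})=r$. This matches the direct sum hypothesis. The only point needing care is the ordering and dimensions of the blocks, namely $u\in\bR^{n-r}$ paired with $E_1$ and $v\in\bR^r$ with $E_2$. I do not expect any step to be a genuine obstacle, since this is an elementary computation.
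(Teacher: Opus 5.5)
Your proposal is correct and follows the paper's proof essentially verbatim: you identify the Jacobian as the constant matrix $(E_1\,|\,ME_2)$, left-multiply by the orthogonal matrix $E^\top$, and read off $|\det(E_2^\top ME_2)|$ from the resulting block upper triangular matrix with diagonal blocks $I_{n-r}$ and $E_2^\top ME_2$. Your closing consistency check (invertibility of $E_2^\top ME_2$ being equivalent to the direct-sum hypothesis) is also correct, though the proof does not need it.
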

\begin{proof}
    Since
    \begin{equation}
        \Psi(u,v)=E_1u+ME_2v=(E_1|ME_2)(u,v)^\top,
    \end{equation}
    the matrix of $\Psi$ is $N=(E_1|ME_2)$.
    Therefore, using that $E=(E_1|E_2)$ is an orthogonal linear parametrisation of $\mathcal{L}$,
    \begin{align}
        |\det\nabla_{(u,v)}\Psi|= |\det(N)|=|\det(E^\top N)|=\left|\det{
        \begin{pmatrix}
            I_{n-r} & E_1^\top ME_2\\
            O_{r\times (n-r)} & E_2^\top ME_2
        \end{pmatrix}
        }\right|=|\det(E_2^\top ME_2)|.
    \end{align}
    This concludes the proof.
    
\end{proof}

We now apply Lemma \ref{Lemma-COVar} to the two concrete cases treated in this paper. 
\begin{enumerate}[(1)]
\item We first consider integrals in the form
\begin{align}
    \int_{\ker(B)}\int_{\ker(B)^\perp}\phi(x_1+D^\top Ax_2)\rmd x_1\rmd x_2,
\end{align}
with suitable integrability assumptions on $\phi$.
First, we parametrise the integral by means of the linear parametrisations $E_1:\mathbb{R}^r\to \ker(B)^\perp$ and $E_2:\mathbb{R}^{n-r}\to \ker(B)$,
\begin{equation}
    \int_{\ker(B)}\int_{\ker(B)^\perp}\phi(x_1+D^\top Ax_2)\rmd x_1\rmd x_2=\int_{\rd}\phi(E_1u+D^\top Av)\rmd u\rmd v,
\end{equation}
where we have used \eqref{CV} together with the orthogonality of $E_1$ and $E_2$.
The decomposition given by \eqref{symplDecom1} and the mapping properties of Proposition \ref{PropIsomorphisms} $(iii)$ guarantee that the assumptions of Lemma \ref{Lemma-COVar}, with $\mathcal{L}=\ker(B)$ and $M=D^\top A$ are met.
With the notation therein, the columns of $E_2$ parametrize $\ker(B)$, so that $BE_2=O$ and therefore
\begin{equation}
     E_2^\top D^\top AE_2=E_2^\top(I+B^\top C)E_2=I_{n-r}+(BE_2)^\top CE_2=I_{n-r},
\end{equation}
thereby implying that the determinant of the change of variables is $1$, so that
\begin{equation}
   \int_{\ker(B)}\int_{\ker(B)^\perp}\phi(x_1+D^\top Ax_2)\rmd x_1\rmd x_2=\int_{\bR^r}\int_{\bR^{n-r}}\phi(E_1u+D^\top AE_2v)\rmd u\rmd v.
\end{equation}
In conclusion,
\begin{equation}\label{Edo1}
    \int_{\ker(B)}\int_{\ker(B)^\perp}\phi(x_1+D^\top Ax_2)\rmd x_1\rmd x_2=\int_{\bR^r}\int_{\bR^{n-r}}\phi(u,v)\rmd u\rmd v=\int_{\rd}\phi(x)\rmd x.
\end{equation}
\item Then, we shall consider integrals in the form
\begin{align}
    \int_{R(B)}\int_{R(B)^\perp}\phi(x_1+AA^\top \xi_2)\rmd x_1\rmd \xi_2.
\end{align}
Again, we choose two orthogonal parametrisations $E_1:\mathbb{R}^r\to R(B)$ and $E_2:\mathbb{R}^{n-r}\to R(B)^\perp$ and use \eqref{CV} to get:
\begin{align}
    \int_{R(B)}\int_{R(B)^\perp}\phi(x_1+AA^\top \xi_2)\rmd x_1\rmd \xi_2=\int_{\rd}\phi(E_1u+AA^\top E_2v)\rmd u\rmd v.
\end{align}
We then apply Lemma \ref{Lemma-COVar} with $\mathcal{L}=R(B)^\perp$ and $M=AA^\top$, whose assumptions are verified in view of Proposition \ref{PropIsomorphisms} $(iv)$ and the consequent decomposition formula \eqref{symplDecom3}, to change variables:
\begin{align}
    \int_{R(B)}\int_{R(B)^\perp}\phi(x_1+AA^\top \xi_2)\rmd x_1\rmd \xi_2&=\int_{\rd}\phi(E_1u+AA^\top E_2v)\rmd u\rmd v\\
    &=|\det(E_2^\top AA^\top E_2)|^{-1}\int_{\rd}\phi(x)\rmd x.
\end{align}
Since $E_2$ is an orthogonal parametrisation of $R(B)^\perp$, by \eqref{qL} we conclude:
\begin{equation}\label{Edo2}
    \int_{R(B)}\int_{R(B)^\perp}\phi(x_1+AA^\top \xi_2)\rmd x_1\rmd \xi_2=q_{R(B)^\perp}(A^\top)^{-2}\int_{\rd}\phi(x)\rmd x.
\end{equation}
\end{enumerate}

\end{appendix}

\bibliographystyle{abbrv}
\bibliography{bibliography}

\end{document}